\numberwithin{equation}{section}
\theoremstyle{plain}
\newtheorem{thm}{Theorem}[section]
\newtheorem{conj}[thm]{Conjecture}
\newtheorem{lemma}[thm]{Lemma}
\newtheorem{prop}[thm]{Proposition}
\newtheorem{cor}[thm]{Corollary}
\theoremstyle{definition}
\newtheorem{defn}[thm]{Definition}
\newtheorem{definition}[thm]{Definition}
\newtheorem{notation}[thm]{Notation}
\theoremstyle{remark}
\newtheorem{remark}[thm]{Remark}
\newcommand{\R}{\mathbb{R}}
\newcommand{\T}{\mathbb{T}}
\newcommand{\Q}{\mathbb{Q}}
\newcommand{\N}{\mathbb{N}}
\newcommand{\E}{\mathbb{E}}
\newcommand{\cB}{\mathcal{B}}
\newcommand{\cH}{\mathcal{H}}
\newcommand{\cE}{\mathcal{E}}
\newcommand{\cP}{\mathcal{P}}
\renewcommand{\epsilon}{\varepsilon}
\renewcommand{\rho}{\varrho}
\renewcommand{\phi}{\varphi}
\renewcommand{\i}{\mathtt{i}}
\renewcommand{\j}{\mathtt{j}}
\renewcommand{\v}{\mathtt{v}}
\newcommand{\w}{\mathtt{w}}
\renewcommand{\iint}{\int\hspace{-0.1in}\int}
\newcommand{\id}{\operatorname{id}}
\renewcommand{\mod}{\,\,\mathrm{mod}\,}
\DeclareMathOperator{\spt}{spt}
\DeclareMathOperator{\mini}{mini}
\DeclareMathOperator{\micro}{micro}
\begin{document}
	
\title{Scaling scenery of $(\times m,\times n)$ invariant measures}

\author{Andrew Ferguson, Jonathan M. Fraser and Tuomas Sahlsten}

\address{Department of Mathematics, University of Bristol, University Walk, Clifton, BS8 1TW, Bristol, UK}
\email{andrew.ferguson@bristol.ac.uk}

\address{School of Mathematics, The University of Manchester, Manchester, M13 9PL, UK.}
\email{jon.fraser32@gmail.com}

\address{Einstein Institute of Mathematics, The Hebrew University of Jerusalem, Givat Ram, Jerusalem 91904, Israel}
\email{tuomas@sahlsten.org}

\keywords{CP-chain, symbolic dynamics, Bernoulli measure, Hausdorff dimension, self-affine carpet, projections, the distance set conjecture.}
\subjclass[2010]{28D05, 37C45, 28A80}

\thanks{AF acknowledges support from EPSRC grant EP/I024328/1 and the University of Bristol.  JMF acknowledges support from the University of St Andrews, an EPSRC doctoral training grant, the University of Warwick, the EPSRC grant EP/J013560/1 and the University of Manchester.  TS acknowledges the support from the University of Bristol, the Finnish Centre of Excellence in Analysis and Dynamics Research, the Emil Aaltonen Foundation and the European Union (ERC grant $\sharp$306494)}


\begin{abstract}We study the scaling scenery and limit geometry of invariant measures for the non-conformal toral endomorphism $(x,y) \mapsto (mx \mod 1,ny \mod 1)$ that are Bernoulli measures for the natural Markov partition. We show that the statistics of the scaling can be described by an ergodic CP-chain in the sense of Furstenberg. Invoking the machinery of CP-chains yields a projection theorem for Bernoulli measures, which generalises in part earlier results by Hochman-Shmerkin and Ferguson-Jordan-Shmerkin. We also give an ergodic theoretic criterion for the dimension part of Falconer's distance set conjecture for general sets with positive length using CP-chains and hence verify it for various classes of fractals such as self-affine carpets of Bedford-McMullen, Lalley-Gatzouras and Bara\'nski class and all planar self-similar sets.
\end{abstract}

\maketitle


\section{Introduction and main results}
\label{sec:intro}
Using ergodic theory to study problems in geometry is not new, however, there have recently been some major advances in the fields of fractal geometry and geometric measure theory made by studying the dynamics of the process of \textit{magnifying} fractal sets and measures.  In particular, the recent papers of Hochman \cite{Hoc10} and Hochman-Shmerkin \cite{HocShm12} have developed ideas of Furstenberg \cite{furst1, furst2} and introduced a rich theory which has proven most useful in solving many long standing problems in geometry and analysis where scaling dynamics is present. See for example \cite{HocShm13equidist} for applications to equidistribution problems in metric number theory and \cite{jin, HocShm12} for applications to projections of fractal sets and measures.  

The idea of using tangents has been used extensively in the past, for example in the use of tangent measures in \cite{preiss} and in metric geometry in the study of tangent metric spaces. Often it turns out that tangents enjoy more regularity than the original object of study. When the object of study has a conformal structure, for example in the case of self-similar measures satisfying the open set condition, one expects the tangents to be essentially the same object. However, in the presence of non-conformality, in the limit we often obtain a completely different but sometimes more regular geometry.

In order to take advantage of the new more regular limit geometry arising in the non-conformal setting, we have to find a way to transfer information back to the original object. The scaling dynamics of the blow-ups can be modeled using a \textit{CP-chain}, which is a Markov process that records both the point where we zoom-in and the \textit{scenery} which we then see. If the CP-chain enjoys suitable irreducibility properties, the main results of \cite{HocShm12} yield that it is possible to transfer strong geometric information about the \textit{projections} of the limit structures back to the original geometry of interest. In this paper, we find out that a class of non-conformal measures will have scaling statistics that satisfy the assumptions required by Hochman-Shmerkin, which allows us to obtain new geometric results about them.

Our main applications will concern the Hausdorff dimensions of various sets and measures related to classical problems in geometric measure theory.  Throughout the paper we will write $\dim$ for the Hausdorff dimension of a set and for the \textit{(lower) Hausdorff dimension} of a measure, which is defined as
\[
\dim \mu = \inf \{ \dim E : \mu(E) > 0\}.
\]
We will also write $\mathcal{H}^s$ for the $s$-dimensional Hausdorff measure.  For a review of these notions see \cite{Mattila95}.

\subsection{Scenery of $\times m$ and $(\times m,\times n)$ invariants} Important examples of dynamically invariant sets and measures can be found by studying the expanding maps $T_m : \T \to \T$, $m \in \N$, of the unit circle $\T$, where
$$T_m(x) = mx \mod 1.$$
Throughout the paper we will often identify $\T$ and $[0,1)$ (and thus $\T^2$ and $[0,1)^2$)  in the obvious way.  Furstenberg has proposed several conjectures on invariants for these maps, of which perhaps the most famous is the \textit{$\times 2$ $\times3$ conjecture} asking about the uniqueness of simultaneously invariant non-atomic ergodic measures for $T_2$ and $T_3$. In \cite{HocShm12}, among many other things, Hochman and Shmerkin verified the following related conjecture of Furstenberg on projections of products of $T_2$ and $T_3$ invariant sets.  Write $\Pi_{2,1}$ to denote the set of all orthogonal projections $\pi : \R^2 \to \R$ with $\pi_1$ and $\pi_2$ being the horizontal and vertical projections respectively. 

\begin{conj}[Furstenberg] If $X, Y \subset \T$ are closed and invariant under $T_2$ and $T_3$ respectively, then
$$\dim \pi(X \times Y) = \min\{1,\dim (X \times Y)\}$$
for all $\pi \in \Pi_{2,1} \setminus \{\pi_1,\pi_2\}$.
\end{conj}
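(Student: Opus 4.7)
The plan is to apply the CP-chain machinery (as developed by Hochman-Shmerkin and adapted to the $(\times m,\times n)$ setting in this paper) to a suitable product of Bernoulli measures supported on $X$ and $Y$, and then transfer the resulting projection statement from measures to sets.

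First, I would reduce to measures by a variational principle. For any closed $T_m$-invariant set $Z \subset \T$ and any $\eps > 0$, standard results in the symbolic/thermodynamic formalism yield an ergodic Bernoulli measure $\mu_Z$ for the natural $m$-adic Markov partition with $\spt(\mu_Z) \subset Z$ and $\dim \mu_Z > \dim Z - \eps$. Applying this with $m=2$ on $X$ and $m=3$ on $Y$, I obtain Bernoulli measures $\mu$ and $\nu$ whose product $\mu \times \nu$ is Bernoulli for the natural Markov partition of the $(T_2,T_3)$-system on $\T^2$, is supported on $X \times Y$, and satisfies $\dim(\mu \times \nu) = \dim \mu + \dim \nu$ (the dimensions of these product Bernoulli measures behave additively by a Ledrappier-Young type argument). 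Next I would apply the main scaling-scenery theorem of this paper to conclude that $\mu \times \nu$ generates an ergodic CP-chain in the sense of Furstenberg. The Hochman-Shmerkin projection theorem for CP-distributions then gives
\[
\dim \pi_*(\mu \times \nu) = \min\{1,\dim(\mu \times \nu)\}
\]
for every $\pi \in \Pi_{2,1}$ outside a (typically small) exceptional set of ``conformal'' directions of the CP-chain. Combined with $\dim \pi(X \times Y) \geq \dim \pi_*(\mu \times \nu)$ and letting $\eps \to 0$ this yields the sought lower bound, while the upper bound $\dim \pi(X\times Y) \leq \min\{1,\dim(X\times Y)\}$ is immediate from $\pi$ being Lipschitz into $\R$.

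The principal obstacle is pinning down the exceptional set: in general, Hochman-Shmerkin allow a potentially non-trivial set of bad projections determined by the ``self-similar'' scaling directions of the CP-chain, and one must show that in the non-conformal $(\times 2,\times 3)$ setting the only such directions are the two coordinate axes. Here one expects to exploit the fact that $\log 2/\log 3 \notin \Q$, so that the horizontal and vertical scaling rates are incommensurable and no rotated or skewed ``hidden'' conformal direction can arise in the scenery. A secondary but real difficulty is making sure the approximating Bernoulli measures really are Bernoulli for the natural Markov partition (not merely for some refinement), so that the paper's scaling-scenery theorem applies verbatim; if this is delicate, one may need to approximate $\dim X$ and $\dim Y$ by entropies of measures supported on higher-order cylinder codings and verify that the CP-chain statement passes to such refinements.
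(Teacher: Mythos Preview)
This statement is not proved in the paper; it is quoted as a conjecture of Furstenberg that was settled by Hochman and Shmerkin in \cite{HocShm12} (see the paper's Theorem~\ref{hocshmproj}). So there is no ``paper's own proof'' to compare against, only the Hochman--Shmerkin argument for products of general $T_m$- and $T_n$-invariant measures.

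Your proposal has a genuine gap at the very first step. The claim that for any closed $T_m$-invariant $Z\subset\T$ one can find a Bernoulli measure for the natural $m$-adic partition supported on $Z$ with dimension arbitrarily close to $\dim Z$ is false. A Bernoulli measure with weights $(q_0,\dots,q_{m-1})$ is supported on the full shift over the subalphabet $\{i:q_i>0\}$, so one needs a subalphabet $A$ with $A^\N\subset Z$. Already for the golden mean shift this forces $|A|=1$, giving dimension $0$. Your suggested fix via higher-block recodings does not rescue the argument in general: if $Z$ is a minimal subshift of positive entropy (and such $T_m$-invariant sets exist, e.g.\ suitable Toeplitz systems), then $Z$ contains no periodic points, hence no nonempty full shift $A^\N$ on $k$-blocks for any $k$. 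Thus no Bernoulli measure, on any block recoding, is supported on $Z$, and the variational step cannot be carried out within the Bernoulli class.

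The Hochman--Shmerkin proof avoids this entirely: their CP-chain construction (cited here as Theorem~\ref{hocshmproj}) applies to \emph{arbitrary} $T_m$- and $T_n$-invariant measures $\mu,\nu$, not just Bernoulli ones. One then uses the ordinary variational principle to choose ergodic invariant $\mu,\nu$ on $X,Y$ with $\dim\mu+\dim\nu>\dim X+\dim Y-\eps$, applies their projection theorem to $\mu\times\nu$, and lets $\eps\to0$. Your intuition about the exceptional set is correct---the irrationality of $\log 2/\log 3$ is exactly what reduces the bad directions to $\{\pi_1,\pi_2\}$, as in Lemma~\ref{invariant} of the present paper---but the restriction of this paper's Theorem~\ref{thm:main} to the Bernoulli class means it cannot be used as a black box for Furstenberg's conjecture in the generality stated.
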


This provided a sharpening of Marstrand's classical projection theorem in this case, which states that for general $K \subseteq \mathbb{R}^2$ the number $\min\{1,\dim K\}$ is the Hausdorff dimension of the projection $\pi(K)$ for \textit{almost every} orthogonal projection $\pi : \R^2 \to \R$. The result of Hochman-Shmerkin was in fact more general as it was also true for measures:

\begin{thm}[Theorem 1.3 of \cite{HocShm12}] \label{hocshmproj} Suppose $\log m / \log n$ is irrational. If measures $\mu,\nu$ on $\T$ are invariant under $T_m$ and $T_n$ respectively, then
$$\dim \pi(\mu \times \nu) = \min\{1,\dim (\mu \times \nu)\}$$
for all $\pi \in \Pi_{2,1} \setminus \{\pi_1,\pi_2\}$.
\end{thm}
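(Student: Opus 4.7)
The plan is to construct an ergodic CP-chain whose typical measure is $\mu\times\nu$ and then invoke the Hochman--Shmerkin projection theorem for CP-generated measures; the conclusion will follow once one shows that the only projections for which the dimension can drop coincide with $\pi_1$ and $\pi_2$.

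The first step is to realize each factor as a \emph{typical} measure of a CP-chain. Since $\mu$ is $T_m$-invariant, the scaling scenery obtained by restricting $\mu$ to the $m$-adic cell of generation $k$ containing a point $x$ and rescaling to $[0,1)$ defines a stationary Markov chain, i.e.\ a CP-chain with base $m$; passing to an ergodic component I may assume this CP-chain is ergodic, and similarly for $\nu$ with base $n$. The immediate difficulty is that zooming into $\mu\times\nu$ by $m^{-k}$ horizontally and $n^{-k}$ vertically produces rectangles of drifting aspect ratio, so the naive product process is not itself a CP-chain for any single partition of $[0,1)^{2}$.

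To remedy this, I would pass to a common scale parameter by running the scenery in continuous time. The irrationality of $\log m/\log n$ guarantees via Weyl equidistribution that as $k\to\infty$ the fractional parts of $k\log m/\log n$ equidistribute on $[0,1)$, which lets one interpolate between vertical $n$-adic levels and assemble the two sceneries into a joint CP-chain on $[0,1)^{2}$ carrying an extra $\T$-valued coordinate that records the offset between the horizontal and vertical scale clocks. Ergodicity of this augmented chain reduces to ergodicity of a skew product of the two individual scenery systems with an irrational circle rotation, which holds precisely because $\log m/\log n\notin\Q$. Having produced an ergodic CP-chain whose base measure is $\mu\times\nu$, I would apply the Hochman--Shmerkin projection theorem \cite{HocShm12} to conclude that $\dim\pi(\mu\times\nu)=\min\{1,\dim(\mu\times\nu)\}$ for every $\pi\in\Pi_{2,1}$ outside the exceptional set of projections that factor through an invariant direction of the CP-chain. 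Because the extra circle-rotation coordinate makes the scaling structure of the chain rich in every non-axis direction, this exceptional set reduces to $\{\pi_1,\pi_2\}$, giving the theorem.

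The main obstacle is the construction of the joint CP-chain: one has to patch the two differently-scaled sceneries into a single Markov process, prove stationarity and ergodicity in the common continuous-scale parameter, and convert the number-theoretic hypothesis into an ergodic-theoretic statement via the skew product. Without this bridge, the product of a $T_m$- and a $T_n$-invariant measure supplies only separate CP-chains on the two factors, and the Hochman--Shmerkin projection theorem does not apply directly.
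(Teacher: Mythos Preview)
This theorem is quoted from \cite{HocShm12} and the paper does not give its own proof; it only remarks that ``the proof relied on the construction of a CP-chain generated by the product measure $\mu\times\nu$'' and then, in proving the more general Theorem~\ref{thm:proj}, reuses the same partition operator (approximate squares driven by the $\alpha$-rotation) that Hochman--Shmerkin introduced for exactly this purpose. Your outline is precisely that strategy: build a CP-chain on $[0,1)^2$ whose state carries an extra $\T$-coordinate recording the aspect-ratio offset, obtain ergodicity from the irrational rotation, and then invoke Proposition~\ref{semicproj}. So at the level of architecture your proposal agrees with both the original source and the paper's own generalization.

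The one place where your sketch is too vague to count as a proof is the last step, where you assert that the exceptional set of projections ``reduces to $\{\pi_1,\pi_2\}$'' because the rotation coordinate ``makes the scaling structure rich in every non-axis direction''. In the paper (Lemma~\ref{invariant}) and in \cite{HocShm12} this is a concrete computation, not a soft statement: the hyperbolic rescaling $S_t$ intertwines with oblique projections via $\pi_s\circ S_t = n^{-t/2}\pi_{s-t}$, so that $E(\pi_s)$ becomes an average of $\dim\pi_{s-t}(\cdot)$ over $t\sim\lambda$, and Marstrand's theorem for almost every $t$ then forces $E(\pi_s)=\min\{1,\dim(\mu\times\nu)\}$ for every $s$. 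Without this intertwining identity the Hochman--Shmerkin projection theorem gives only an almost-everywhere statement, so you should make this step explicit rather than appeal to ``richness''.
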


The proof relied on the construction of a CP-chain generated by the product measure $\mu \times \nu$ on the $2$-torus $\T^2$. Notice that by the respective invariance of $\mu$ and $\nu$, the product measure $\mu \times \nu$ is in fact invariant under the non-conformal toral endomorphism $T_{m,n} : \T^2 \to \T^2$, defined by
$$T_{m,n}(x,y) = (mx \mod 1, nx \mod 1).$$
Hence a natural step forward would be to ask if one can extend the results in \cite{HocShm12} on product measures to general $T_{m,n}$ invariant measures. Such a step would be quite a leap, as general $T_{m,n}$ invariant measures can have a much more complicated structure than measures of the product form considered by Hochman and Shmerkin. However, if we restrict our attention to invariant measures which are the push forward of \textit{Bernoulli measures} on the code space via the natural Markov partition, we can construct a CP-chain for them. We will refer to such measures as \textit{Bernoulli measures for} $T_{m,n}$, see Definition \ref{bernoullimeasures} below.  The main result of this paper is as follows.

\begin{thm}
\label{thm:main}
If $\mu$ is a Bernoulli measure for $T_{m,n}$ on $\T^2$, then it generates an ergodic CP-chain.

The measure component of the CP-chain is the distribution on the measures of the form 
$$S_t(\pi_1 \mu \times \mu_x).$$ 
Here $\mu_x$ is the conditional measure of $\mu$ with respect to the vertical fiber $\pi_1^{-1}\{x\}$, where $x$ is drawn according to $\pi_1 \mu$. Moreover, $S_t : \R^2 \to \R^2$ is the hyperbolic matrix
$$S_t = \begin{pmatrix} n^{-t/2} & 0 \\ 0 & n^{t/2} \end{pmatrix},$$ 
where $t \in [0,1)$ is distributed according to Lebesgue if $\frac{\log m}{\log n} \in \R \setminus \Q$, and otherwise according to a periodic measure with respect to $\frac{\log m}{\log n}$-rotation on $\T$, see Definition \ref{ourchain} below.
\end{thm}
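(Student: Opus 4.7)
The plan is to exploit the symbolic description of $\mu$ as the push-forward of a Bernoulli measure under the natural Markov coding and then construct, by hand, a candidate CP-chain whose invariant measure distribution matches the one in the statement. Let $\Sigma=\{0,\dots,m-1\}^{\N}\times\{0,\dots,n-1\}^{\N}$ carry the Bernoulli measure $\P$ with weights $(p_{i,j})$, and let $\Pi\colon\Sigma\to\T^2$ be the obvious coding map, so $\mu=\Pi_*\P$. Then the horizontal marginal $\pi_1\mu$ is automatically the Bernoulli measure on $\T$ for $T_m$ with weights $q_i=\sum_j p_{i,j}$, and the conditional measures $\mu_x$ are described, for $\pi_1\mu$-a.e.\ $x$, by a Bernoulli-type rule whose $k$-th vertical digit has conditional law $(p_{i_k,j}/q_{i_k})_j$ given the $k$-th horizontal digit $i_k$ of $x$.

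Since the Markov cells of $T_{m,n}$ are genuinely non-square, I would not try to use a standard $b$-adic CP-chain; instead I would magnify along \emph{approximate squares}, i.e.\ cells of horizontal width $m^{-k}$ and vertical height $n^{-\lfloor k\log m/\log n\rfloor}$. The aspect ratio of such a cell is $n^{\{k\log m/\log n\}}\in[1,n)$, and the residual anisotropy that remains after rescaling it uniformly to roughly unit size is exactly what is recorded by the hyperbolic matrix $S_t$ with $t=\{k\log m/\log n\}$. The claim is that applying this magnification at a $\mu$-generic base point $(x,y)$ produces a measure asymptotically of the stated form $S_t(\pi_1\mu\times\mu_x)$: horizontally, the window is aligned with a full $k$-level $m$-adic cell, so by Bernoulli independence across levels the horizontal tail is an independent copy of $\pi_1\mu$; vertically, the window is aligned with a full $\lfloor k\log m/\log n\rfloor$-level $n$-adic cell and, conditionally on the horizontal tail, the vertical tail is distributed as the conditional measure of $\mu$ along the corresponding fibre.

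The CP-chain property would then be verified by showing that passing from level $k$ to level $k+1$ implements, on the joint state $(x,\mu_x,t)$, the dynamics $x\mapsto T_m x$, $\mu_x\mapsto\mu_{T_m x}$ (possibly with a further $n$-adic vertical zoom whose law is unchanged by stationarity of $\P$) and $t\mapsto t+\log m/\log n\bmod 1$. Stationarity of the described distribution is then immediate from the Bernoulli property of $\P$. Ergodicity of the resulting CP-chain reduces to ergodicity of the driving skew product of the Bernoulli shift over rotation by $\log m/\log n$ on $\T$: when $\log m/\log n$ is irrational the rotation is uniquely ergodic with respect to Lebesgue and the Bernoulli base is mixing, so the whole system is ergodic; when $\log m/\log n$ is rational the rotation has a finite periodic orbit and ergodicity still follows from mixing of the Bernoulli base.

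The main technical obstacle will be the asymptotic product factorisation invoked in the second paragraph. The horizontal and vertical digits of the symbolic system are jointly, rather than separately, Bernoulli, so inside any \emph{finite} approximate square the horizontal and vertical marginals of the scaled measure remain correlated; the decomposition $S_t(\pi_1\mu\times\mu_x)$ emerges only in the limit $k\to\infty$. Quantifying this decoupling along a generic orbit, while simultaneously bookkeeping the $m$-adic/$n$-adic mismatch through the $\log m/\log n$-cocycle driving $t$, is the real content of the argument; once this is in place, the CP-chain invariance and its ergodicity follow by standard skew-product considerations.
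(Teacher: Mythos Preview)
Your plan is essentially the paper's: magnify along approximate squares, record the residual aspect ratio via the rotation $t\mapsto t+\alpha\bmod 1$ with $\alpha=\log m/\log n$, identify the limiting scenery as $S_t(\pi_1\mu\times\mu_x)$, and deduce stationarity and ergodicity from Bernoulli mixing over the rotation. The paper formalises this via a symbolic skew product $Z$ on $\T\times\Sigma\times\mathcal{P}(\Sigma)$ and proves exactly the invariance/ergodicity you sketch.

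The one place your outline is genuinely underspecified is the step you yourself flag as ``the real content'': the convergence of the scenery distributions at a generic point. You cannot deduce this from ergodicity of the skew product, because the orbit whose empirical distribution you need starts at the \emph{original} measure $\mu$, not at a typical point of the limiting CP-distribution; and the minimeasures $\mu^{C_t^k(\i,\j)}$ are not yet of the product form $\pi_1\mu\times\mu_\i$, so the scenery process is not simply the orbit of an ergodic system. The paper resolves this by (i) an explicit parametrisation $\Psi_k:I^\infty\to\mathcal{P}(\Sigma)$ of the $k$-th minimeasure, with $\Psi_k(\i)$ agreeing with the limit $\Psi(\i)=\pi_1\mu\times\mu_\i$ on all cylinders of depth $\le k$, so that the sets $\{\nu:\Psi_h(\nu)=\Psi_h(\v)\}$ become genuine metric balls in the space of mini/micromeasures; (ii) a reduction of the required weak convergence to indicator test functions over products of such balls with cylinders and intervals; and (iii) for these indicators, Lyons' strong law for weakly correlated bounded variables---the point being that the resulting process $X_i$ is not i.i.d., but $X_i$ and $X_j$ are independent except for $O(1)$ values of $j$ per $i$, which suffices for the $L^2$ criterion $\sum_N N^{-1}\lVert N^{-1}\sum_{i\le N}\tilde X_i\rVert_2^2<\infty$. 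This is the missing ingredient in your plan.
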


The Bernoulli property of $\mu$ in the formulation of Theorem \ref{thm:main} is essential as it provides independence along cylinders for the measure and allows us to create a parametrisation of the blow-ups of $\mu$ at a typical point. Moreover, this parametrisation gives us a clear description of the measures appearing in the CP-chain. Going beyond Bernoulli may be possible if one assumes strong mixing properties for the measure, but we do not pursue this here.  See Section \ref{sec:prospects} for further discussion.

Important examples of $(\times m,\times n)$ invariant sets and measures which are not in general of the product type considered in Theorem \ref{hocshmproj} are the \textit{self-affine carpets} of Bedford and McMullen and the self-affine Bernoulli measures supported on them. Generally, a self-affine set is an attractor of an iterated function system (IFS) where the maps involved are translate linear maps on some Euclidean space.  Recall that an IFS is a finite set of contractions $\{S_i\}_{i=1}^N$, and it is well-known that there exists a unique non-empty compact set $K$ satisfying
\[
K = \bigcup_{i=1}^N S_i(K)
\]
which is called the attractor of the IFS.  Self-affine \textit{carpets} are a special class of self-affine sets.  They have attracted a great deal of attention in the literature in recent years due to their simple construction and the fact that they exhibit many new phenomena not observed in the self-similar setting. 

\begin{figure}[h!]
	\centering
	\includegraphics[width=120mm]{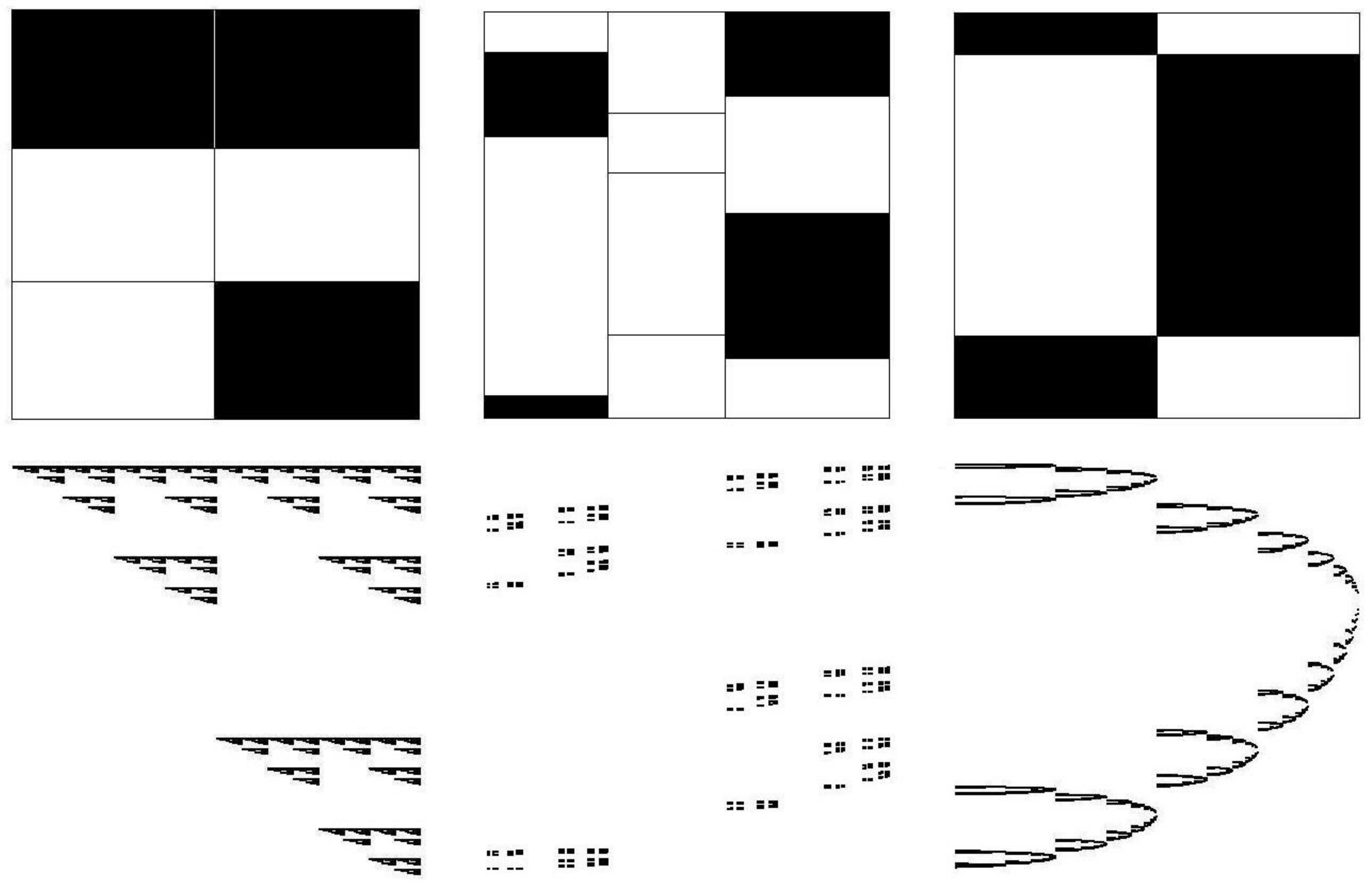}
\caption{The defining patterns for a Bedford-McMullen carpet, a Lalley-Gatzouras carpet and a Bara\'nski carpet from left to right and the corresponding attractors.}
\end{figure}

The first such class was introduced by Bedford \cite{bedford} and McMullen \cite{mcmullen} in the mid 1980s and is defined as follows. Take the unit square and divide it up into an $m \times n$ grid for some $m,n \in \mathbb{N}$ with $1 < m < n$.  Then select a subset of the rectangles formed by the grid and consider the IFS consisting of the affine maps which map the square onto each chosen rectangle, preserving orientation. The \textit{Bedford-McMullen carpet} is the self-affine attractor of the resulting IFS. Lalley and Gatzouras \cite{lalley-gatz} generalised this construction by allowing the columns to have varying widths and be divided up, independently, with the only restriction being that the height of each chosen rectangle had to be strictly smaller than the length of the base.  Another, more recently introduced, class was considered by Bara\'nski \cite{baranski}.  This time divide the unit square up into an arbitrary mesh of rectangles by slicing horizontally and vertically a finite number of times, of course at least once in each direction, choose a collection of subrectangles, and define an IFS as before. 

Even more general classes of self-affine carpet have been introduced and studied by Feng-Wang \cite{fengaffine} and Fraser \cite{fraseraffine}, but we omit detailed discussions of these because our results do not a priori apply in this setting.  See Section \ref{sec:prospects} for more details.

Although self-affine carpets are subsets of $[0,1]^2$, when viewing them from a dynamical point of view it is often more convenient to consider them as subsets of the torus $\mathbb{T}^2$. In particular, Bedford-McMullen carpets viewed in this way are $T_{m,n}$ invariant.

We also note that as $\mu$ is a Bernoulli measure for $T_{m,n}$, it is a Bernoulli measure supported on a Bedford-McMullen carpet (which may be the entire square). Set theoretical scaling sceneries of Bedford-McMullen and Lalley-Gatzouras carpets were studied by K\"aenm\"aki and Bandt in \cite{bandtkaenmaki}, where they obtained that under minor assumptions including that there be at least one box chosen in every column, the `tangent sets' of these carpets are of the form $[0,1] \times C$ where $C$ is a suitable Cantor set. Our work is certainly inspired by their work and the reason to go beyond sets to measures is to have the machinery of CP-chains at our disposal.

Using the fact that $T_{m,n}$-Bernoulli measures generate ergodic CP-chains, we can use the technology provided by Furstenberg, Hochman and Shmerkin to obtain several geometric applications, which we will now detail.

\subsection{Projection theorem}

Our first application of Theorem \ref{thm:main} is a projection theorem for Bernoulli measures for $T_{m,n}$ that is analogous to Theorem \ref{hocshmproj}.

\begin{thm}
\label{thm:proj}
If $\log m / \log n$ is irrational, then for a Bernoulli measure $\mu$ for $T_{m,n}$ we have
$$\dim \pi\mu = \min\{1,\dim \mu\}$$
for every $\pi \in \Pi_{2,1} \setminus\{\pi_1,\pi_2\}$.
\end{thm}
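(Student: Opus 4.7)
The plan is to derive Theorem \ref{thm:proj} from Theorem \ref{thm:main} by invoking the Hochman-Shmerkin projection theorem for measures generating ergodic CP-chains. In the form most useful here, that theorem says: if $\mu$ generates an ergodic CP-chain $Q$ on $\R^d$ and $\pi \in \Pi_{d,k}$ satisfies $\dim \pi \nu = \min\{k, \dim \nu\}$ for $Q$-a.e.\ $\nu$, then the same equality holds with $\mu$ in place of $\nu$. Consequently the task reduces to verifying this ``typical'' condition in every direction $\pi \in \Pi_{2,1} \setminus \{\pi_1, \pi_2\}$ for the CP-chain $Q$ supplied by Theorem \ref{thm:main}.

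Next, I would exploit the explicit description of $Q$: its typical measures are $\nu_{x,t} := S_t(\pi_1\mu \times \mu_x)$ with $x \sim \pi_1\mu$ and $t$ uniformly distributed on $[0,1)$ (using $\log m / \log n \notin \Q$). Because $S_t$ is diagonal with positive entries, for any direction $\theta$ the composition $\pi_\theta \circ S_t$ is a scalar multiple of $\pi_{\theta(t)}$ where
$$\theta(t) \;:=\; \arctan\!\bigl(n^t \tan \theta\bigr),$$
and crucially $\theta(t) \in \{0, \pi/2\}$ if and only if $\theta \in \{0, \pi/2\}$. Therefore, for $\theta \notin \{0, \pi/2\}$,
$$\dim \pi_\theta \nu_{x,t} \;=\; \dim \pi_{\theta(t)}\bigl(\pi_1\mu \times \mu_x\bigr).$$
The core step is then to show that this last quantity equals $\min\{1, \dim \mu\}$ for $\pi_1\mu$-a.e.\ $x$ and Lebesgue-a.e.\ $t$. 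The factor $\pi_1\mu$ is a $T_m$-invariant Bernoulli measure on $\T$, and the fibre factor $\mu_x$, viewed on the vertical fibre $\cong \T$, is a product measure whose digit marginals encode the $T_n$-expansion in the $y$-direction. If $\mu_x$ happened to be genuinely $T_n$-invariant (as occurs when $\mu$ is itself a product), Theorem \ref{hocshmproj} would apply verbatim to give the required equality; the general case requires a mild extension.

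The main obstacle is precisely this last step: the conditional fibre measure $\mu_x$ need not be $T_n$-invariant, since its digit distributions at level $k$ depend on the $k$-th digit in the $x$-code. I would handle this in one of two ways: (a) extend the proof of Theorem \ref{hocshmproj} to cover such inhomogeneous product factors, using the fact that $\mu_x$ still generates a one-dimensional CP-chain under the $T_n$-scenery (which is implicit in the construction of $Q$); or (b) stay at the level of CP-chains throughout, verifying directly from Theorem \ref{thm:main} and the irrationality of $\log m / \log n$ that the push-forward CP-distribution $\pi_\theta Q$ is non-degenerate and of dimension $\min\{1, \dim \mu\}$ whenever $\theta \notin \{0, \pi/2\}$. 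Either route secures the hypothesis of the Hochman-Shmerkin projection theorem for every $\pi \in \Pi_{2,1} \setminus \{\pi_1, \pi_2\}$, from which $\dim \pi\mu = \min\{1, \dim \mu\}$ follows at once.
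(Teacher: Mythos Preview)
Your overall strategy matches the paper's: use Theorem~\ref{thm:main} to put $\mu$ on an ergodic CP-chain $Q$, express a $Q$-typical measure as $S_t(\pi_1\mu\times\mu_x)$, observe that $\pi_\theta\circ S_t$ is a scalar multiple of a projection in a new direction, and then verify that the projected dimension of the typical measure is $\min\{1,\dim\mu\}$ so that Proposition~\ref{semicproj} (equivalently your stated form of the Hochman--Shmerkin projection theorem) transfers this back to $\mu$.

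The gap is at your ``core step.'' You identify as the main obstacle that $\mu_x$ is not $T_n$-invariant, and then propose to handle this either by extending Theorem~\ref{hocshmproj} to inhomogeneous fibre products or by some further CP-chain argument. Neither is needed, and neither is fleshed out. The point you are missing is that the irrationality of $\log m/\log n$ makes $t$ Lebesgue-distributed, and for fixed $\theta\notin\{0,\pi/2\}$ the map $t\mapsto\theta(t)$ is a diffeomorphism onto an interval of directions. Hence ``Lebesgue-a.e.\ $t$'' is the same as ``Lebesgue-a.e.\ direction'' within that interval, and the \emph{classical} Marstrand projection theorem for measures (e.g.\ Hunt--Kaloshin) gives
\[
\dim\pi_{\theta(t)}(\pi_1\mu\times\mu_x)=\min\{1,\dim(\pi_1\mu\times\mu_x)\}
\]
for a.e.\ $t$, with no invariance hypothesis on $\mu_x$ whatsoever. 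Since $\dim(\pi_1\mu\times\mu_x)=\dim\mu$ for $\pi_1\mu$-a.e.\ $x$ (this is built into the CP-chain description), the required identity follows immediately. The paper carries this out with the parametrisation $\pi_s(x,y)=x+n^{-s}y$, in which $\pi_s\circ S_t$ is a scalar multiple of $\pi_{s-t}$ and the change of variables is just a translation.

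In short: your proof is correct in outline and essentially the paper's, but you have manufactured a difficulty that is not there. Replace the appeal to Theorem~\ref{hocshmproj} and your proposed extensions with a one-line invocation of Marstrand, exploiting that $t$ is Lebesgue-distributed.
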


The above result follows from a more general phenomenon, first observed in \cite[Section 10]{HocShm12}.  Namely, if a measure $\mu$ generates a CP-chain through a partition operator which is driven by an irrational rotation then the above statement holds.  Similar irrationality conditions have appeared in other works searching for the exact set of projections that preserve dimension, see for example \cite{mor98, fjs, nps, ps}.

The following projection theorem by Ferguson, Jordan and Shmerkin in \cite{fjs} is particularly relevant to us.

\begin{thm}[Theorem 1.1 of \cite{fjs}]
\label{thm:fergjordshm}
If $K$ is a self-affine carpet in the Bedford-McMullen, Lalley-Gatzouras or Bara\'nski class which is of irrational type, then
$$\dim \pi(K) = \min\{1,\dim K\}$$
for every $\pi \in \Pi_{2,1} \setminus\{\pi_1,\pi_2\}$.
\end{thm}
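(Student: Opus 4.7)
The plan is to deduce the theorem from Theorem \ref{thm:proj} by producing, for each class of carpet, a $T_{m,n}$-Bernoulli measure supported inside (or close to) $K$ whose dimension realises or approximates $\dim K$. The inequality $\dim \pi(K) \leq \min\{1,\dim K\}$ is immediate from the Lipschitz property of $\pi$, so only the lower bound demands work.

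For the Bedford-McMullen case $K$ is itself $T_{m,n}$-invariant, and the irrational type hypothesis is precisely $\log m/\log n \in \R\setminus\Q$, which matches the assumption of Theorem \ref{thm:proj}. The classical Bedford-McMullen dimension formula is attained by an explicit Bernoulli probability vector on the selected digits, and this produces a Bernoulli measure $\mu$ on $K$ (in the sense of Definition \ref{bernoullimeasures}) with $\dim \mu = \dim K$. Since $\pi(K) \supseteq \operatorname{supp}(\pi\mu)$, Theorem \ref{thm:proj} delivers
$$\dim \pi(K) \geq \dim \pi\mu = \min\{1,\dim \mu\} = \min\{1,\dim K\}.$$

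For Lalley-Gatzouras and Bara\'nski carpets $K$ is not $T_{m,n}$-invariant for any fixed $m,n$, so I would instead exhaust $K$ from inside by Bedford-McMullen sub-carpets. Fix integers $M,N$ with $\log M/\log N$ irrational; iterate the defining IFS to a large depth $k$ and retain only those $k$-cylinders whose horizontal and vertical contractions lie in narrow windows around $M^{-1}$ and $N^{-1}$ respectively. Up to a bi-Lipschitz rearrangement of coordinates, the attractor $K_{M,N,k}$ of the resulting sub-IFS is a Bedford-McMullen carpet on an $M\times N$ grid sitting inside $K$. A variational/thermodynamic argument, mirroring the Lalley-Gatzouras or Bara\'nski dimension formulas and exploiting that $K$ is of irrational type so that near-optimal digit distributions can be matched by cylinders of almost prescribed contractions, should show $\dim K_{M,N,k}$ can be made arbitrarily close to $\dim K$. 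The Bedford-McMullen case applies to each $K_{M,N,k}$, and passing to the limit produces the lower bound for $K$.

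The main obstacle will be the approximation step in the Lalley-Gatzouras/Bara\'nski case. Two points require care: first, the combinatorial selection of cylinders yielding a bona fide Bedford-McMullen structure after a bi-Lipschitz change of coordinates, together with verifying that the distortion introduced does not decrease $\dim \pi(K_{M,N,k})$; second, showing that a near-optimal Bernoulli vector for the Lalley-Gatzouras or Bara\'nski dimension formula can be well-approximated by one supported on cylinders of near-prescribed contractions, which is where the irrational type hypothesis of $K$ enters crucially via an equidistribution or density argument for the logarithms of the contraction ratios of the iterated IFS.
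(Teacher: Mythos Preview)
The paper does not prove Theorem~\ref{thm:fergjordshm}; it is quoted from \cite{fjs} as background. So there is no ``paper's own proof'' to compare against, and your proposal should be judged on its own merits.

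Your Bedford--McMullen argument is fine: the McMullen measure is Bernoulli for $T_{m,n}$, has full dimension, and Theorem~\ref{thm:proj} applies directly.

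For the Lalley--Gatzouras and Bara\'nski classes, your approximation strategy is the right idea in spirit, and indeed the paper cites exactly such a device as Lemma~\ref{FJS} (Lemma~4.3 of \cite{fjs}): inside $K$ one finds a sub-IFS with a \emph{single} diagonal linear part $\mathrm{diag}(a,b)$, uniform fibres, and dimension within $\varepsilon$ of $\dim K$. However, your plan diverges from this in a way that creates a genuine gap. You insist on integer $M,N$ so that Theorem~\ref{thm:proj} applies literally, but the contractions $a,b$ produced by the approximation lemma are \emph{not} reciprocals of integers in general, and there is no mechanism to force them to be --- the cylinder contractions of an iterated Lalley--Gatzouras system need not cluster near any $(M^{-1},N^{-1})$. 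The fix is not to push harder for integers but to observe, as the paper does in Section~6.3, that the entire CP-chain construction of Section~4 and the proof of Lemma~\ref{invariant} go through verbatim for these approximating systems with arbitrary $a<b$, since the symbolic setup is identical; the role of the irrationality hypothesis is then played by $\log a/\log b\notin\Q$, which is exactly what the ``irrational type'' assumption on $K$ is designed to guarantee for the approximants. Finally, your bi-Lipschitz worry is unfounded here: the coordinate change needed to view the sub-attractor as a carpet on $[0,1]^2$ is a diagonal linear map, which sends non-principal projections to non-principal projections and thus preserves the conclusion.
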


Here the \textit{irrational type} condition for $m \times n$ Bedford-McMullen carpets simply means that $\log m / \log n$ is irrational. Theorem \ref{thm:proj} could be considered as a measure theoretical extension of Theorem \ref{thm:fergjordshm}.

\subsection{Distance sets} Our second application of Theorem \ref{thm:main} concerns the dimension of distance sets.  Given an analytic set $K \subset \mathbb{R}^d$, the \textit{distance set} $D(K)$ of $K$ is defined as the set of all distances in $K$, in other words
\[
D(K) = \{ \lvert x - y \rvert : x,y \in K\}.
\]
Falconer's distance set problem, originating with the paper \cite{falc}, concerns different variants on relating the size of $D(K)$ to the size of $K$.  One version of this is the following well-studied problem on dimensions:
\begin{conj}[Distance set conjecture]
Let $d \geq 2$ and let $K \subset \mathbb{R}^d$ be analytic. If $\dim K \geq d/2$, then 
$$\dim D(K)  = 1.$$
Furthermore, if $\dim K > d/2$, then $D(K)$ has positive Lebesgue measure.
\end{conj}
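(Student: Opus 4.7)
The plan is to leverage the CP-chain framework of this paper together with a pinned-distance reduction. First I would invoke Frostman's lemma to select, for any $s < \dim K$, a Radon probability measure $\mu$ supported on $K$ with $\dim \mu \geq s$. Since $D(K) \supset D_x(K) := \{|x-y| : y \in K\}$ for every $x \in K$, it suffices to bound $\dim D_x(K)$ from below for a $\mu$-positive set of base points $x$. Away from the singularity at $x$, the map $y \mapsto |x-y|$ is a smooth submersion whose infinitesimal action at a point $y_0$ coincides, up to a rotation, with the orthogonal projection onto the radial direction $e(x,y_0) := (y_0-x)/|y_0-x|$. Hence the problem reduces to a projection theorem applied to a one-parameter family of directions that varies with $x$.

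The key step is to show that at $\mu$-a.e.\ $y_0$ the measure $\mu$ is uniformly scaling and generates an ergodic CP-distribution $Q$ whose measure component has dimension $\geq 1$; this is precisely the hypothesis required to invoke the Hochman-Shmerkin projection theorem in the form used to prove Theorem \ref{thm:proj}. Given such a $Q$, there is a small exceptional set of directions outside which $\dim \pi \nu = \min(1,\dim \nu)$ for $Q$-typical $\nu$. One would then argue that the radial directions $\{e(x,y_0) : x \in K\}$, as $x$ ranges in $K$, form a sufficiently rich subset of the Grassmannian to meet the complement of this exceptional set, yielding $\dim D_x(K) \geq 1$ for $\mu\otimes\mu$-typical $(x,y_0)$. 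Since $\dim K \geq d/2 \geq 1$ (as $d\geq 2$), the dimensional hypothesis on $\nu$ should be inherited from the construction.

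For the positive-measure statement under $\dim K > d/2$, Hausdorff-dimension preservation is no longer sufficient: one must prove that the pushforward of $\mu\otimes\mu$ under $(x,y)\mapsto |x-y|$ is absolutely continuous on $(0,\infty)$. The standard route here is Mattila's integral formula, which converts this question into a decay estimate on spherical $L^2$-averages of $\widehat{\mu}$; the threshold $d/2$ enters as the critical exponent above which $\int|\widehat{\mu}(\xi)|^2|\xi|^{-(d-1)}\,d\xi$ becomes enforceable. One would try to upgrade the CP-chain projection theorem from a Hausdorff-dimension statement to an $L^q$-statement about projected measures, in the spirit of Shmerkin's absolute-continuity improvements, and then transfer this through the radial family of the previous paragraph to obtain a density bound on the distance measure.

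The main obstacle, and the reason the full conjecture remains open, is that an arbitrary analytic $K$ carries no dynamical self-similarity: a generic Frostman measure on $K$ need not be uniformly scaling, nor need it generate any ergodic CP-distribution, so the machinery of Theorem \ref{thm:main} does not apply off the shelf. Bypassing this obstruction seems to require either an intrinsic construction of an ergodic CP-distribution from an arbitrary Frostman measure (for which no general method is presently known) or the infusion of additive-combinatorial and Fourier-analytic ingredients outside the scope of the present techniques. Accordingly, I would expect to produce only a conditional statement, namely that the conjecture holds for the subclass of measures that do generate ergodic CP-chains of sufficient dimension, recovering the Bernoulli-measure, self-similar and carpet cases treated in this paper.
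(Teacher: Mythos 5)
This statement is an open conjecture, not a theorem proved in the paper; the paper states it as such and then establishes only partial results in its direction (Theorem \ref{thm:distset} and Corollaries \ref{dyadicmicro}--\ref{thm:distsetirrational}). You correctly recognise this and your conditional sketch --- reduce to a pinned-distance map $y \mapsto |x-y|$, linearise it locally to an orthogonal projection onto the radial direction $e(x,y_0)$, invoke the CP-chain non-linear projection estimate (Proposition \ref{prop8.4}), and use openness/density of the good-projection set from the lower semicontinuity of $E$ together with richness of the direction set $S(K)$ to find a base pair $(x,y_0)$ --- is essentially the paper's proof of Theorem \ref{thm:distset} and Proposition \ref{thm:distset2}. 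Your diagnosis of the obstruction is also correct: an arbitrary Frostman measure on an analytic set need not be uniformly scaling nor generate an ergodic CP-chain, which is exactly why the paper only obtains the conjecture for Bernoulli measures of $T_{m,n}$, carpets, and self-similar sets, together with the micromeasure Corollary \ref{dyadicmicro} for general measures.

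One place where you go beyond the paper is the positive-Lebesgue-measure part. Your suggestion to run Mattila's $L^2$-spherical-average criterion and to seek an absolute-continuity upgrade of the CP-chain projection theorem is a reasonable speculative route, but the paper explicitly disclaims it: the Hochman--Shmerkin machinery as used here is ``tailored only to tackle problems regarding dimension,'' and no absolute-continuity analogue is developed. So the paper makes no claim, conditional or otherwise, about the second half of the conjecture, whereas your sketch gestures at a programme that would require genuinely new ingredients (Fourier-analytic $L^q$ bounds on projected micromeasures) not present in the paper. This is not a gap in your reasoning so much as a correct identification of what is missing, but you should be clear that nothing in the paper supports the second half even conditionally.

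Two smaller points of calibration. First, the family of radial directions $\{e(x,y_0) : x \in K\}$ need not itself be ``rich'' unless $S(K)$ is dense or at least uncountable; the paper handles the degenerate case by appealing to Besicovitch--Miller (if $S(K)$ is not dense then $K$ lies on a rectifiable curve and $\cH^1(K)>0$ forces $D(K)$ to contain an interval), and in Proposition \ref{thm:distset2} by a countability dichotomy. Your sketch should make this case split explicit rather than assume richness outright. Second, in the reduction you write ``$\dim \mu \geq s$'' for a Frostman measure; for the CP-chain applications one actually needs exact-dimensionality and generation of an ergodic CP-chain, which Frostman's lemma does not give --- the paper's supply of suitable measures comes from Theorem \ref{thm:main} (Bernoulli case), the micromeasure construction of \cite{HocShm12} (Corollary \ref{dyadicmicro}), or self-similarity (Lemma \ref{selfsimcpchain}), not from Frostman.
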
 

This conjecture has gained a lot of attention in recent years due to its links with harmonic analysis and additive combinatorics, see for example the papers Hofmann and Iosevich \cite{hofm} and Katz and Tao \cite{katz} for discussions. The current state of the conjecture is that if one assumes 
$$\dim K > \frac{d}{2}+\frac{1}{3},$$
then $D(K)$ has positive Lebesgue measure. In the plane this was proved by Wolff \cite{wolf} in 1999, and for general $d$ by Erdogan \cite{erdogan2006} in 2006 by adapting Tao's bilinear Fourier restriction estimates. For the dimension part, Falconer \cite{falc} proved that for a planar set $K$ with $\dim K \geq 1$ we have $\dim D(K) \geq 1/2$. Bourgain \cite{bourgain2003} then extended this result by showing that there is a constant $c > 1/2$ such that if a set in the plane has $\dim K > 1$, then 
$$\dim D(K) > c.$$ 
Unfortunately, this constant $c$ is far from $1$ so the conjecture remains open, even in the plane. 

However, if $K$ supports a measure of full dimension such that the scaling scenery satisfies suitable statistical regularity, then we can say more. Our main result on distance sets, which builds on \cite{HocShm12}, gives a criterion for the support of a measure to satisfy the dimension part of the distance set conjecture; see Section \ref{sec:cpchain} for definitions of terminology used here. 

\begin{thm} \label{thm:distset}
Let $\mu$ be a measure on $\R^2$ which generates an ergodic CP-chain and is supported on a set $\spt \mu$ with positive length. Then
\[
\dim D(\spt \mu) \geq \min\{ 1, \dim \mu\}.
\]
\end{thm}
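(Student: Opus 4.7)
The plan is to combine three ingredients: the Hochman--Shmerkin projection theorem for ergodic CP-chains, a local linearization of the distance function as a one-dimensional orthogonal projection, and a radial-projection-type argument to ensure the correct direction is realized within $\spt \mu$. Set $\alpha := \min\{1, \dim \mu\}$. By the CP-chain projection theorem, the stationary distribution $P$ of the ergodic CP-chain generated by $\mu$ has the following property: for $P$-a.e. $\nu$, $\dim \nu = \dim \mu$, and there is a Lebesgue-null exceptional set $E_\nu \subset S^1$ such that $\dim \pi_\theta \nu = \alpha$ for every $\theta \in S^1 \setminus E_\nu$. Moreover, for $\mu$-a.e. $x$ the micro-measures of $\mu$ at $x$ are $P$-distributed; call this $\mu$-full measure set of good points $G$.

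For $y \neq x$ the distance function $\phi_y(z) := |z-y|$ is smooth near $x$ with differential $D\phi_y(x) = \pi_{\theta(x,y)}$, where $\theta(x,y) := (x-y)/|x-y| \in S^1$. A first-order Taylor expansion yields
\[
\phi_y(x+h) - |x-y| = \pi_{\theta(x,y)}(h) + O(|h|^2/|x-y|),
\]
so after blowing up around $x$ by the factor $r^{-1}$ and subtracting $|x-y|$, the restriction of $\phi_y$ to $B(x,r)$ converges uniformly to the orthogonal projection $\pi_{\theta(x,y)}$ as $r \to 0$. Passing to a tangent measure $\nu$ of $\mu$ at $x \in G$, obtained as a weak limit of magnifications $\mu_r$ of $\mu$ at $x$, the corresponding normalized pushforwards of $\mu$ under $\phi_y$ (suitably translated and rescaled) converge weakly to $(\pi_{\theta(x,y)})_* \nu$. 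Combining lower-semicontinuity of Frostman exponents along weak limits with the fact that $(\pi_{\theta(x,y)})_*\nu$ has dimension exactly $\alpha$ whenever $\theta(x,y) \notin E_\nu$, one deduces that $D(\spt \mu)$ contains, near the point $|x-y|$, a set of Hausdorff dimension at least $\alpha$.

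It remains to produce a pair $(x,y) \in G \times \spt\mu$ with $\theta(x,y) \notin E_\nu$ for an appropriate tangent $\nu$ at $x$; this is where the positive-length hypothesis enters. If $\spt\mu$ lies on a single line, then $D(\spt\mu)$ contains a bi-Lipschitz copy of $\spt\mu - \spt\mu$, whose dimension is at least $\dim \spt\mu \geq 1 \geq \alpha$ by positive length, and we are done. Otherwise, a Mattila-type radial projection theorem applied to the positive-length set $\spt\mu$ supplies some $y \in \spt\mu$ from which the radial projection $x \mapsto \theta(x,y)$ sends $\spt\mu$ onto a subset of $S^1$ of positive Lebesgue measure, and this inevitably meets the co-null set of good directions $S^1 \setminus E_\nu$.

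The main obstacle, and the technical heart of the proof, is the synchronization of this radial projection argument with the $\mu$-generic set $G$: since $G$ has full $\mu$-measure but need not have positive length (for instance when $\mu$ is singular with respect to $\mathcal{H}^1|_{\spt\mu}$), one must either run the radial projection argument at the level of the measure $\mu$ itself, using a Marstrand-type statement for radial pushforwards, or argue directly that the CP-chain tangent analysis can be carried out at points of $G$ that are hit by the preimage in $\spt\mu$ of $S^1 \setminus E_\nu$. The linearization and CP-chain projection ingredients are by contrast essentially standard once this matching is in place.
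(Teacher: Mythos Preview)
Your proposal has two genuine gaps, both stemming from not using the tools the paper actually invokes.

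First, the step ``lower-semicontinuity of Frostman exponents along weak limits'' is not valid: Hausdorff dimension of measures is neither upper nor lower semicontinuous under weak convergence (discrete measures can converge to Lebesgue). You cannot deduce anything about $\dim (\phi_y)_*\big(\mu|_{B(x,r)}\big)$ from the dimension of the projected tangent $(\pi_{\theta(x,y)})_*\nu$. The paper sidesteps this entirely by applying Hochman--Shmerkin's \emph{nonlinear} projection result (Proposition~\ref{prop8.4}), which gives directly
\[
\dim g\mu \ \geq\ E_q(\pi) - C/q
\]
for any $C^1$ map $g$ with $\sup_{z}\|D_zg - \pi\| < \delta^q$. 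This is applied to the distance map $g(z)=|z-x|$ on a small ball around $y$; no passage to tangent measures is needed.

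Second, your direction-finding mechanism is more complicated than necessary and, as you yourself note, incomplete. You work with micromeasure-dependent exceptional sets $E_\nu$ which are merely Lebesgue-null, forcing a radial-projection argument of positive measure and then the synchronization with the generic set $G$ that you cannot close. The paper instead uses the function $E:\Pi_{2,1}\to\R$ of Proposition~\ref{semicproj}, which is \emph{lower semicontinuous}; hence for each $\epsilon>0$ the good set $\Pi_\epsilon=\{\pi: E(\pi)>\min\{1,\dim\mu\}-\epsilon\}$ is \emph{open} and of full measure. The positive-length hypothesis then enters through a clean dichotomy on the direction set $S(K)$: if $S(K)$ is dense in $S^1$, it meets the open set $\Pi_\epsilon$, so one picks $x,y\in K$ with $(x-y)/|x-y|\in\Pi_\epsilon$; if $S(K)$ is not dense, then $K$ lies on a rectifiable curve \cite[Lemma~15.13]{Mattila95}, and since $\mathcal{H}^1(K)>0$, Besicovitch--Miller \cite{besmil} gives that $D(K)$ contains an interval outright. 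No radial-projection estimate and no synchronization are required. Finally, the restricted normalized measure $\mu|_{B(y,r)}$ generates the \emph{same} CP-chain $Q$ by Besicovitch density \cite[Lemma~7.3]{HocShm12}, so Proposition~\ref{prop8.4} applies to it and one lets $q\to\infty$ and $\epsilon\to 0$.
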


Of course, general measures need not generate ergodic CP-chains and so our theorem cannot be used to deal with the general case.  However, measures satisfying the hypothesis of Theorem \ref{thm:distset} in fact arise as \textit{dyadic micromeasures} of \textit{any} measure in $\R^2$. In other words, as accumulation measures of blow-ups using dyadic cubes, see Definition \ref{CPmicro}. This is once again a testament of the phenomenon that tangents may enjoy more regularity. 

\begin{cor}
\label{dyadicmicro}
Suppose $\mu$ is any measure on $\R^2$ with $\dim \mu > 1$. Then after randomly translating $\mu$, there are dyadic micromeasures $\nu$ of $\mu$ at $\mu$ almost every point with
$$\dim D(\spt \nu) = 1.$$
\end{cor}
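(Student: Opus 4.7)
The plan is to combine Theorem \ref{thm:distset} with a generation theorem for ergodic CP-chains. Applying Theorem \ref{thm:distset} to suitable dyadic micromeasures $\nu$ of $\mu$ immediately gives $\dim D(\spt \nu) \geq \min\{1, \dim \nu\}$, and combined with the trivial upper bound $\dim D(\spt \nu) \leq 1$ (since $D(\spt \nu) \subseteq \R$), this yields the desired equality.

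The existence of such micromeasures $\nu$ is supplied by Hochman's theorem on fractal distributions (Theorem 1.7 of \cite{Hoc10}): for any Radon measure $\mu$ on $\R^d$, after a random translation the dyadic scenery distributions at $\mu$-a.e. point $x$ converge to an ergodic CP-distribution $P_x$ carried on dyadic micromeasures of $\mu$ at $x$. The random translation is precisely what prevents $\mu$ from concentrating on boundaries of the dyadic grid, which would destroy continuity of the scenery flow and is the reason the corollary is stated only after translation. By ergodicity of $P_x$, a $P_x$-typical micromeasure $\nu$ itself generates the CP-chain $P_x$, so the first hypothesis of Theorem \ref{thm:distset} is automatic.

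To verify the positive-length hypothesis of Theorem \ref{thm:distset}, I would combine the assumption $\dim \mu > 1$ with the dimension conservation property of ergodic CP-chains: for $\mu$-a.e.\ $x$ the lower local dimension of $\mu$ is at least $\dim \mu > 1$, and this value is inherited by $P_x$-typical micromeasures $\nu$, giving $\dim \nu > 1$. In particular $\cH^1(\spt \nu) = \infty > 0$, so $\spt \nu$ has positive length. Then Theorem \ref{thm:distset} applies and closes the argument. The main obstacle is invoking Hochman's generation theorem and the accompanying dimension transfer result precisely (both of which are technical but well established); the geometric content of the corollary is fully packaged into Theorem \ref{thm:distset}, so once these citations are in place there is essentially nothing further to prove.
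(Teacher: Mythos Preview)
Your proposal is correct and follows essentially the same route as the paper: invoke the random-translation generation theorem to obtain an ergodic CP-chain supported on dyadic micromeasures with dimension at least $\dim\mu$, pick a typical micromeasure $\nu$ that generates this chain with $\dim\nu>1$ (hence $\mathcal{H}^1(\spt\nu)>0$), and apply Theorem~\ref{thm:distset}. The only cosmetic difference is that the paper cites \cite[Theorem 7.10]{HocShm12} for the generation result rather than \cite{Hoc10}, and deduces $\dim\nu\geq\dim\mu$ via the upper entropy dimension bound for $\dim Q$ rather than via local dimensions.
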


Other examples which satisfy the conditions of Theorem \ref{thm:distset} are measures with \textit{uniformly scaling scenery} in the sense of Gavish \cite{gavish}. For more concrete applications, one can look to dynamically defined sets. Combining Theorem \ref{thm:distset} with our main result on the generation of a CP-chain for $T_{m,n}$ Bernoulli measures, we obtain the following corollaries.

\begin{cor} \label{cor:selfaffinedist}
If $K$ is a self-affine carpet in the Bedford-McMullen class with $\dim K  \geq 1$, then
$$\dim D(K) = 1.$$ 
Furthermore, this also holds if $K$ is in the Lalley-Gatzouras or Bara\'nski class with $\dim K  >1$.
\end{cor}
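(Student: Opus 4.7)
The plan is to derive the corollary from Theorem \ref{thm:distset} applied to an appropriately chosen Bernoulli measure $\mu$ supported on the carpet $K$. Once $\dim D(\spt \mu) \geq 1$ is established and we observe $D(\spt \mu) \subseteq D(K) \subseteq \R$, the equality $\dim D(K) = 1$ follows immediately.

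For the Bedford-McMullen case, view $K \subseteq \T^2$ in the natural way so that it is $T_{m,n}$-invariant. Choose the McMullen measure $\mu$ of maximal dimension on $K$: it is a Bernoulli measure for $T_{m,n}$ with $\dim \mu = \dim K \geq 1$ and $\spt \mu = K$. By Theorem \ref{thm:main} it generates an ergodic CP-chain. When $\dim K > 1$ we automatically have $\mathcal{H}^1(K) = \infty$, so the hypotheses of Theorem \ref{thm:distset} are met and yield $\dim D(K) = 1$. The boundary regime $\dim K = 1$ needs a separate check that $\spt \mu$ has positive length; this should follow either by inspecting the McMullen formula, or by approximating $K$ by a subsystem that has dimension close to $1$ while retaining positive $\mathcal{H}^1$-measure.

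For the Lalley-Gatzouras and Bara\'nski cases the carpet is no longer $T_{m,n}$-invariant, so Theorem \ref{thm:main} does not apply directly. The strategy is to extract from $K$ a Bedford-McMullen sub-carpet $K' \subseteq K$ of dimension $\dim K' \geq 1$. This is possible precisely because $\dim K > 1$ leaves enough room to iterate the IFS sufficiently many times and to select a sub-collection of composed maps whose associated rectangles form a uniform $(m',n')$-grid with packing dimension still at least $1$. Since $D(K') \subseteq D(K)$, applying the already-proved Bedford-McMullen case to $K'$ transfers to $\dim D(K) = 1$.

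The main obstacle is verifying the positive length hypothesis in Theorem \ref{thm:distset}. For $\dim K > 1$ this is automatic, but both the borderline case $\dim K = 1$ for Bedford-McMullen and the extraction of a uniform-grid sub-carpet of dimension at least $1$ from a Lalley-Gatzouras or Bara\'nski carpet require careful combinatorial work in order to keep the $\mathcal{H}^1$-mass of the chosen support positive while maintaining the dimensional bound needed to activate Theorem \ref{thm:distset}.
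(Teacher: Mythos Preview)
Your Bedford--McMullen argument is essentially the paper's: take the McMullen measure of full dimension, apply Theorem~\ref{thm:main} to get an ergodic CP-chain, then Theorem~\ref{thm:distset}. The borderline case $\dim K = 1$ is not left open in the paper; it is handled by citing Peres \cite{perescarpets}, who shows that a Bedford--McMullen carpet with $\dim K \geq 1$ always has $\mathcal{H}^1(K) > 0$. So no approximation-by-subsystems is needed there.

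For the Lalley--Gatzouras and Bara\'nski cases your strategy has a real gap. You propose extracting a genuine Bedford--McMullen sub-carpet, i.e.\ one sitting on an $(m',n')$ integer grid. This is not generally possible: the column widths and row heights in a Lalley--Gatzouras or Bara\'nski system can be arbitrary reals in $(0,1)$, and no amount of iteration will force products of such numbers to be reciprocals of integers. What the paper actually invokes is Lemma~4.3 of Ferguson--Jordan--Shmerkin \cite{fjs}, which produces a sub-IFS with \emph{uniform} linear part $\begin{pmatrix} a & 0 \\ 0 & b \end{pmatrix}$ and uniform fibers, but with $a,b$ not necessarily of the form $1/m',1/n'$. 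The resulting attractor is not $T_{m,n}$-invariant for any $m,n$, so Theorem~\ref{thm:main} does not literally apply. The paper's point is that the entire symbolic construction behind Theorem~\ref{thm:main} --- the partition operator, the skew product $Z$, the parametrisation of minimeasures --- depends only on the symbolic coding and the ratio $\log a / \log b$, not on $a,b$ being reciprocals of integers. Hence the CP-chain construction goes through verbatim for these uniform-fiber sub-carpets, and Theorem~\ref{thm:distset} can then be applied to them. Your outline misses this transfer step: you need the FJS lemma as stated, and then an observation that the proof of Theorem~\ref{thm:main} generalises, rather than a reduction to the Bedford--McMullen case proper.
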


Finally, Theorem \ref{thm:distset} also allows us to recover the main result of 
Orponen \cite{orponen} directly from the construction of a CP-chain.

\begin{cor} \label{cor:selfsimdist}
If $K$ is a self-similar set in the plane with $\mathcal{H}^1(K)>0$, then
$$\dim D(K) = 1.$$
\end{cor}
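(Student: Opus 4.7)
The plan is to apply Theorem \ref{thm:distset} to a carefully chosen self-similar measure $\mu$ supported on $K$. It suffices to produce such a $\mu$ with (a) $\spt \mu \subseteq K$ of positive length, (b) $\dim \mu \geq 1$, and (c) $\mu$ generating an ergodic CP-chain. Indeed, Theorem \ref{thm:distset} then gives $\dim D(\spt \mu) \geq \min\{1,\dim \mu\} = 1$, and since $D(\spt \mu) \subseteq D(K) \subseteq \R$ we conclude $\dim D(K) = 1$.

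Let $\{S_i\}_{i=1}^N$ be the similitudes generating $K$, with contraction ratios $r_i \in (0,1)$, and let $s$ denote the similarity dimension, so that $\sum_{i=1}^N r_i^s = 1$. Since $\mathcal{H}^1(K)>0$ we have $\dim K \geq 1$, hence $s \geq 1$. Take Bernoulli weights $(p_i)_{i=1}^N$ with every $p_i > 0$ and with entropy/Lyapunov ratio $\sum p_i \log p_i / \sum p_i \log r_i \geq 1$; for instance the natural weights $p_i = r_i^s$ give this ratio equal to $s \geq 1$, and $\dim \mu$ is no larger than this ratio with equality under the OSC. To avoid having to separately verify separation, one can alternatively take $\mu$ to be the normalisation of $\mathcal{H}^1|_K$, which has positive mass by hypothesis and Hausdorff dimension at least $1$. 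Either choice gives (a) and (b): $\spt \mu = K$ and $\dim \mu \geq 1$.

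For (c) we invoke the fact that planar self-similar measures generate ergodic CP-chains. This is part of the general machinery developed by Furstenberg and Hochman–Shmerkin and explicitly carried out for self-similar measures in \cite{HocShm12} (and for uniformly scaling sceneries in \cite{gavish}). The mechanism: at a $\mu$-typical $x \in K$ with IFS coding $i_1 i_2 \cdots$, the blow-up of $\mu$ around $x$ at scale $r_{i_1}\cdots r_{i_k}$ equals $S_{i_1 \cdots i_k}^{-1}\mu$ up to a rotation, and the Bernoulli structure on the code space together with the ergodic theorem produces a well-defined distribution of scenery measures; pairing this with an appropriately randomised dyadic partition operator yields a CP-chain, which is ergodic because the Bernoulli shift is mixing. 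Finally, applying Theorem \ref{thm:distset} to $\mu$ closes the argument.

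The main obstacle is verifying (c) when the IFS involves rotations or its contraction ratios are multiplicatively independent: the natural symbolic "magnification times" $-\log(r_{i_1}\cdots r_{i_k})$ do not line up with the dyadic scales used to build a CP-chain, and the orthogonal components of the $S_{i_1\cdots i_k}^{-1}$ can wander on $O(2)$. Both issues are circumvented by augmenting the base point with an auxiliary scale/rotation coordinate and passing to a suspension flow, reducing to an ergodic system whose projection onto the scenery gives the desired CP-chain; this is the step that has to be imported from \cite{HocShm12,gavish}, and where all the technical work resides.
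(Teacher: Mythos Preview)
Your overall strategy—feed a suitable measure on $K$ into Theorem \ref{thm:distset}—is the right one, but two of your three ingredients are not actually secured.

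\textbf{Step (b) fails without separation.} With overlaps, the self-similar measure with weights $p_i=r_i^s$ need not satisfy $\dim\mu\geq 1$: the entropy/Lyapunov ratio $s$ is only an \emph{upper} bound for $\dim\mu$, and strict inequality is possible. Your alternative, normalised $\mathcal{H}^1|_K$, does have dimension $\geq 1$ but is not a self-similar (Bernoulli) measure, so the scenery argument you sketch for (c) does not apply to it. The paper resolves the overlap issue differently: it invokes a result of Farkas stating that any planar self-similar $K$ with $\mathcal{H}^1(K)>0$ contains, for every $\varepsilon>0$, an SSC self-similar subset $K_\varepsilon$ with $\dim K_\varepsilon\geq\dim K-\varepsilon$ and $\mathcal{H}^1(K_\varepsilon)>0$. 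This reduces the problem to the SSC case.

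\textbf{Step (c) overstates what \cite{HocShm12} delivers.} Those results do not say that a self-similar $\mu$ \emph{itself} generates an ergodic CP-chain for the dyadic partition operator. What one obtains (combining \cite[Theorem 7.10]{HocShm12} and \cite[Proposition 9.1]{HocShm12}) is an ergodic CP-chain $Q$ of dimension $\geq\dim\mu$ supported on the dyadic \emph{micromeasures} of $\mu$; it is then a $Q$-typical micromeasure $\nu$ that generates $Q$ and has $\dim\nu=\dim\mu$. Accordingly the paper applies Theorem \ref{thm:distset} to $\nu$, not to $\mu$. Under SSC the micromeasures have the form $\mu(B)^{-1}S(\mu|_B)$ for a similitude $S$, and conversely $\mu$ is a rescaled restriction of $\nu$; this is what lets one transfer the hypotheses ($\mathcal{H}^1(\spt\nu)>0$) and the conclusion ($D(K)\supseteq r\,D(\spt\nu)$ for some $r>0$) between $\mu$ and $\nu$. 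Your suspension-flow sketch is in the right spirit but is not the mechanism actually on offer in the cited references, and without the micromeasure detour Theorem \ref{thm:distset} is not applicable.
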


Corollary \ref{cor:selfsimdist} was first obtained in \cite{orponen}, where the proof was divided into two cases.  The case where at least one map contained an irrational rotation was dealt with directly using the results of Hochman and Shmerkin on $C^1$ images of self-similar sets and the other case was dealt with via a careful geometric argument and without any need for the machinery of CP-chains. In higher dimensions, Falconer and Jin \cite{jin} presented a proof for self-similar sets with the extra assumption that the group generated by the rotational components of the maps is dense in the full orthogonal group - the natural higher dimensional analogue of the assumption that one map contains an irrational rotation.

The assumption $\cH^1(\spt \mu)>0$ in Theorem \ref{thm:distset} seems somewhat arbitrary and on closer inspection of the proof, it is evident that all we really need is the existence of points $x,y \in \spt \mu$ such that projection onto the line spanned by $x-y$ is one of the `good projections' for the CP-chain. In spirit the `good projections' are related to the `good projections' in Marstrand's projection theorem, i.e. those $\pi \in \Pi_{2,1}$ when
$$\dim \pi \mu = \min\{1,\dim \mu\}.$$
Taking this into account we obtain a technical result concerning when the set of `bad directions' is countable. In order to avoid introducing technical notation on CP-chains here, we defer the statement of this result until Section \ref{technicalprop} where it will be given as Proposition \ref{thm:distset2}. 

This technical proposition gives us information about the dimension of $D(K)$ when $\dim K \leq 1$. A natural conjecture in this setting would be that $\dim D(K) \geq \dim K$.  The $T_{m,n}$ Bernoulli measures considered in this paper already gives us examples of measures satisfying the hypotheses of Proposition \ref{thm:distset2} due to our projection result Theorem \ref{thm:proj}. As mentioned before in Theorem \ref{thm:fergjordshm}, for self-affine carpets that are of irrational type in the sense of Ferguson-Jordan-Shmerkin \cite{fjs}, the exceptional set for Marstrand's projection theorem has at most two directions. Thus we obtain the following general statement.

\begin{cor} \label{thm:distsetirrational}
If $K$ is a self-affine carpet in the Bedford-McMullen, Lalley-Gatzouras or Bara\'nski class which is of irrational type, then
$$\dim D(K) \geq \min\{1, \dim K\}.$$
\end{cor}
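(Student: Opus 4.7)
The plan is to approximate $K$ from inside by Bernoulli measures and apply Proposition~\ref{thm:distset2}. The regime $\dim K > 1$ is already covered by Corollary~\ref{cor:selfaffinedist}, and $\dim K = 0$ is trivial, so the interesting range is $0 < \dim K \le 1$, though the argument is uniform in $\dim K$. In each of the three carpet classes, standard variational/Ledrappier-Young-type calculations yield a sequence of Bernoulli measures $\mu_k$ supported on $K$ with $\dim \mu_k \to \dim K$.

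By Theorem~\ref{thm:main}, together with its extensions to Lalley-Gatzouras and Bara\'nski Bernoulli measures used in the proof of Corollary~\ref{cor:selfaffinedist}, each $\mu_k$ generates an ergodic CP-chain. The irrational-type hypothesis combined with Theorem~\ref{thm:proj} (whose set-level content for the other two classes is supplied by Theorem~\ref{thm:fergjordshm}) forces every $\pi \in \Pi_{2,1} \setminus \{\pi_1,\pi_2\}$ to preserve the dimension of $\mu_k$, so the set of bad directions for the associated CP-chain is contained in $\{\pi_1,\pi_2\}$ and is therefore countable (indeed finite). Since $\dim K > 0$, the carpet $K$ cannot be contained in a single line, so the set of secant directions $\{(y-x)/|y-x| : x,y \in \spt \mu_k,\ x\ne y\}$ is uncountable, and we may pick $x,y \in \spt \mu_k$ whose connecting direction avoids $\{\pi_1,\pi_2\}$ and is thus a good direction for the CP-chain.

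Feeding this into Proposition~\ref{thm:distset2} gives
\[
\dim D(K) \ \geq\ \dim D(\spt \mu_k) \ \geq\ \min\{1,\dim \mu_k\},
\]
and letting $k \to \infty$ with $\dim \mu_k \to \dim K$ produces the required bound $\dim D(K) \ge \min\{1,\dim K\}$.

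The main obstacle is not the reduction itself but its supporting ingredients: first, confirming that Theorem~\ref{thm:main} transfers to Lalley-Gatzouras and Bara\'nski Bernoulli measures with the same structural description of the CP-chain; second, upgrading the set-level projection result of Theorem~\ref{thm:fergjordshm} to the measure level needed to identify the CP-chain's bad-direction set; and third, establishing Proposition~\ref{thm:distset2}, which replaces the $\mathcal{H}^1$-positivity assumption of Theorem~\ref{thm:distset} by the existence of one good secant direction by linearising the radial map $y \mapsto |x-y|$ near a basepoint $x \in \spt \mu_k$ as the orthogonal projection onto $(y-x)/|y-x|$ and then invoking the CP-chain projection theorem at that direction. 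Once these inputs are in hand, the irrational-type hypothesis makes the bad direction set finite and the remainder is a routine approximation.
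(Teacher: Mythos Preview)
Your approach is the same as the paper's: apply Proposition~\ref{thm:distset2} to approximating measures and let the approximation tend to $\dim K$. The paper's proof is literally one sentence, pointing to Lemma~\ref{invariant} (the $E(\pi)=\min\{1,\dim\mu\}$ identity proved in Section~\ref{sec:proj}) together with the machinery already assembled for Corollary~\ref{cor:selfaffinedist}.

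Two points of clarification are worth making. First, the paper does \emph{not} extend Theorem~\ref{thm:main} to arbitrary Bernoulli measures on Lalley--Gatzouras or Bara\'nski carpets, so your first listed obstacle is not resolved in the way you suggest. What the proof of Corollary~\ref{cor:selfaffinedist} actually does is invoke Lemma~\ref{FJS} (from \cite{fjs}) to pass to a subsystem $K_\varepsilon\subset K$ with $\dim K_\varepsilon\ge\dim K-\varepsilon$ whose maps share a single diagonal linear part $\mathrm{diag}(a,b)$ and which has uniform fibres; this subsystem is symbolically identical to a Bedford--McMullen carpet, so Theorem~\ref{thm:main} and Lemma~\ref{invariant} apply verbatim to its natural product measure. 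The irrational-type hypothesis is inherited by these subsystems, and this simultaneously disposes of your second obstacle: one never needs a measure-level upgrade of Theorem~\ref{thm:fergjordshm}, because on the subsystems Lemma~\ref{invariant} already gives $E(\pi)=\min\{1,\dim\mu\}$ for all $\pi\notin\{\pi_1,\pi_2\}$ directly. Note that it is this statement about $E$, not the weaker conclusion $\dim\pi\mu=\min\{1,\dim\mu\}$, that Proposition~\ref{thm:distset2} requires.

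Second, your paragraph about choosing $x,y\in\spt\mu_k$ with a good secant direction is redundant: this case analysis (countable versus uncountable direction set) is already carried out inside the proof of Proposition~\ref{thm:distset2}, so once its hypotheses are verified you may simply quote it.
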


Interestingly, in light of Proposition \ref{thm:distset2} below, understanding the exceptional set in Marstrand's projection theorem is related to the distance set conjecture. For example, Corollary \ref{thm:distsetirrational} leaves open the question on rational type self-affine carpets with small dimension. As far as we know, in this case the size of the exceptional set of Marstrand is not fully understood.

We believe that Theorem \ref{thm:distset} could be applied to other dynamically defined sets and measures which enjoy such uniformly scaling dynamics.  Found, for example, when studying more general self-affine sets than carpets or general $T_{m,n}$ invariant sets. 

Unfortunately, in general the machinery of Hochman and Shmerkin is tailored only to tackle problems regarding dimension. Thus in order to get information about the Lebesgue measure of the distance set using CP-chains, one would need to study absolute continuity analogues of the results in \cite{HocShm12}.

\section{Magnification dynamics}
\label{sec:cpchain}

\subsection{CP-chains} We will first set up some terminology for CP-chains introduced and used by Hochman and Shmerkin in \cite{HocShm12} before we present our construction and proof. In this paper, a \textit{measure} is always a Borel probability measure on some metric space $K$. Write $\cP(K)$ as the space of measures supported on $K$ and $\spt \mu \subseteq K$ for the support of a measure in $\cP(K)$. If $K$ is compact, then $\cP(K)$ is a compact metric space with the \textit{Prokhorov distance}, defined by
$$d(\mu,\nu) = \inf\{\epsilon > 0 : \mu(A) \leq \nu(A_\epsilon)+\epsilon \text{ and } \nu(A) \leq \mu(A_\epsilon)+\epsilon \text{ for all Borel sets }A\},$$
where $A_\epsilon$ is the open $\epsilon$-neighborhood of $A$ in $K$. This metric induces the weak topology on $\cP(K)$, see for example Billingsley \cite[Appendix III, Theorem 5]{Billingsley1968}. Moreover, if $f : K \to K'$ is a map and $\mu \in \cP(K)$, we define $f\mu \in \cP(K')$ as the push-forward measure $\mu \circ f^{-1}$. A \textit{distribution} is a measure on $\cP(K)$ for some metric space $K$. If we write $x \sim \mu$ for a measure $\mu$, we mean that $x$ is a random variable whose distribution is given by $\mu$.

\begin{definition}[Boxes and blow-ups] \label{def:box} Let $B \subset \R^d$ be a \textit{box}, that is, a product of intervals in $\R$ which can be open, closed or half-open. Let $T_B : \R^d \to \R^d$ be the orientation preserving affine map defined by
$$T_B(x) = \frac{1}{|B|^{1/d}}(x-\min \overline{B}),$$
where $|B|$ is the volume of $B$ and $\min \overline{B}$ is the minimal element of $\overline{B}$ with respect to the lexicographical order. Define the \textit{normalised box} $B^* := T_B(B)$. Note that now the volume $|B^*| = 1$. If $\mu$ is a Borel probability measure on $\mathbb{R}^d$ and $B$ is a box with $\mu(B) > 0$, write
$$\mu^B = \frac{1}{\mu(B)} T_B(\mu|_B),$$
which is a probability measure supported on $B^*$.
\end{definition}

\begin{defn}[Partition operator and filtrations] Let $\cE$ be a collection of boxes in $\R^d$. A \textit{partition operator} $\Delta$ on $\cE$ attaches to each $B \in \cE$ a partition $\Delta B \subset \cE$ such that if $S$ is a homothety of $\R^d$, then $S(\Delta B) = \Delta(SB)$ for $B,SB \in \cE$. Given $B \in \cE$, a partition operator $\Delta$ defines a filtration of $B$ by
$$\Delta^0(B) = \{B\} \quad\text{and}\quad \Delta^{n+1}(B) = \bigcup_{A \in \Delta^n(B)} \Delta(A).$$
A partition operator $\Delta$ is $\delta$-\textit{regular} if for any $B \in \cE$ there exists a constant $c > 1$ such that for any $n \in \N$ any element $A \in \Delta^n(B)$ contains a ball of radius $\delta^n/c$ and is contained in a ball of radius $c\delta^n$.
\end{defn}

\begin{definition}[CP-chain]\label{cpchain} Fix a collection of boxes $\cE$ and define a state space
$$\Omega = \{(B,\mu) : \mu \in \cP(B^*), B \in \cE\}.$$
A \textit{$\delta$-regular CP-chain} $Q$ with respect to a $\delta$-regular partition operator $\Delta$ is a stationary Markov process on the state space $\Omega$ with the Markov kernel
$$F(B,\mu) = \sum_{A \in \Delta(B^*)} \mu(A) \delta_{(A,\mu^B)}, \quad (B,\mu) \in \Omega,$$
which we call the \textit{Furstenberg kernel}. In other words, given $A \in \Delta(B^*)$, then the process moves from $(B,\mu)$ to $(A,\mu^B)$ with probability $\mu(A)$.
\end{definition}

When we say `let $Q$ be a CP-chain', we also mean by $Q$ the unique stationary measure with respect to the chain. Then by definition $Q$ is a shift invariant measure for the dynamical system $(\Omega^\N,\sigma)$. This way we can also define the CP-chain to be \textit{ergodic} if the measure preserving dynamical system $(\Omega^\N,\sigma,Q)$ is ergodic in the usual sense: any $\sigma$-invariant set has either measure $0$ or $1$. Thus for an ergodic CP-chain, we have the ergodic theorem at our disposal: if $f : \Omega^\N \to \R$ is continuous, then for a $Q$ typical realisation $X_n = (B_n,\mu_n)$, $n \in \N$, we have
$$\frac{1}{N} \sum_{k = 0}^{N-1} f(X_{k+1},X_{k+2},\dots) \to \int f \, dQ, \quad \text{as } N \to \infty.$$


\begin{definition}[Generating CP-chains and magnification dynamics]\label{CPmicro} Let $\tilde Q$ be the measure component of a CP-chain $Q$ with a partition operator $\Delta$ on a collection of boxes $\cE$. Given $B \in \cE$, the CP-chain $Q$ is \textit{generated} by a measure $\mu \in \cP(B^*)$ if at $\mu$ almost every $x \in B^*$ the \textit{CP scenery distributions}
$$\frac{1}{N} \sum_{k = 0}^{N-1} \delta_{\mu^{\Delta^k(x)}}$$
converge weakly to $\tilde Q$ as $N \to \infty$, and for any $q \in \N$ the $q$-\textit{sparse} CP scenery distributions
$$\frac{1}{N} \sum_{k = 0}^{N-1} \delta_{\mu^{\Delta^{qk}(x)}}$$
converge to some, possibly different, distributions $\tilde Q_q$ as $N \to \infty$. The convergence of the $q$-sparse CP scenery distributions is important when transferring local geometric information back to $\mu$.  In particular, note the appearance of $q$ in the following subsection. It is a consequence of the ergodic theorem that $\tilde Q$ almost every measure $\nu$ in fact generates $Q$, see \cite[Proposition 7.7]{HocShm12}.

The measures $\mu^{\Delta^k(x)}$ appearing in the CP scenery distributions are called \textit{minimeasures} $\mini_\Delta(\mu,x)$ associated to $\Delta$ of $\mu$ at $x$ and accumulation points of them in the space of probability measures are called \textit{micromeasures} $\micro_\Delta(\mu,x)$ associated to $\Delta$ of $\mu$ at $x$. If $\Delta$ is the partition operator associated to the dyadic cube decomposition of $[0,1]^d$, then we call these \textit{dyadic} mini- and micromeasures.
\end{definition}

\subsection{Dimensions and projections of CP-chains}

For our applications, we extensively use the results of \cite{HocShm12} and especially their projection theorems.  In this subsection we briefly recall some of the key definitions and results. Recall that a measure is \textit{exact dimensional} if the local dimension exists and is equal to a constant at almost every point.  In this case, the almost sure common value for the local dimension must be $\dim \mu$.

\begin{definition}[Dimension of a CP-chain] The \textit{dimension} of a CP-chain is the average Hausdorff dimension $\dim \nu$ of measures according to the measure component $\tilde Q$.  Namely,
$$\dim Q = \int \dim \nu \, d\tilde Q(\nu).$$
Recall that the map $\mu \mapsto \dim \mu$ is a Borel function with respect to the weak topology so we are allowed to define the integral here. It is a consequence of the ergodic theorem that if $Q$ is ergodic, then $\tilde Q$ almost every $\nu$ has exact dimension $\dim Q$, see \cite[Lemma 7.9]{HocShm12}. Moreover, for example from the proof of \cite[Lemma 7.9]{HocShm12}, one can see that if $\mu$ generates a CP-chain $Q$, then $\mu$ is exact dimensional with $\dim \mu = \dim Q$. 
\end{definition}

The main quantitative projection theorem of \cite{HocShm12} presented links between the expected dimension of projections of micromeasures and the dimensions of the projections of measures that generate $Q$. 

\begin{notation}[Expected dimension of projections] \label{mapE} Let $Q$ be a $\delta$-regular CP-chain. Fix $r > 0$ and a measure $\mu$. The \textit{$r$-entropy} of $\mu$ is defined by
$$H_{r}(\mu) = - \int \log \mu(B(x,r)) \, d\mu (x).$$
Then the fraction $H_r(\mu)/\log (1/r)$ can be thought as a kind of `$r$-discrete dimension' of $\mu$ at the scale $r > 0$. Given $\pi \in \Pi_{d,k}$, that is, an orthogonal projection $\pi : \R^d \to \R^k$, we write
$$E_q(\pi) = \int \frac{1}{q\log(1/\delta)} H_{\delta^q} (\pi\nu)\, d\tilde Q(\nu),$$
that is, the expected $\delta^q$-discrete dimension of $\pi \nu$ when $\nu$ is drawn according to $\tilde Q$.
\end{notation}

\begin{prop}[Theorem 8.2 of \cite{HocShm12}] \label{semicproj}
Let $Q$ be an ergodic CP-chain. Then the limit $E = \lim_{q \to \infty} E_q$ exists and $E : \Pi_{d,k} \to \R$ is lower semicontinuous with the properties:
\begin{itemize}
\item[(1)] for any $\pi \in \Pi_{d,k}$ we have at $\tilde Q$ almost every $\nu$ that
$$ \dim \pi \nu =  E(\pi) $$
\item[(2)] for almost every $\pi \in \Pi_{d,k}$ we have
$$E(\pi) = \min\{k,\dim Q\}$$
\item[(3)] for any measure $\mu$ that generates $Q$ and for any $\pi \in \Pi_{d,k}$ we have
$$\dim \pi \mu \geq E(\pi)$$
\end{itemize}
\end{prop}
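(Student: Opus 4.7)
The plan is to establish existence of the limit, lower semi-continuity, and the three properties in a unified framework, following the entropy-based approach of Hochman and Shmerkin. The heart of the argument lies in the interplay between the Markov structure of the CP-chain and a Marstrand-type projection estimate for typical orthogonal projections.

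For the existence of $E(\pi) = \lim_{q\to\infty} E_q(\pi)$, the first step is to exploit the stationarity of $Q$ to obtain a near-subadditivity relation. Decomposing the entropy $H_{\delta^{q_1+q_2}}(\pi\nu)$ via the chain rule, using concavity, and using that $q_2$-step iterates of $\nu$ in the chain are again distributed according to $\tilde Q$, one obtains an inequality of the form
\[
(q_1+q_2)\, E_{q_1+q_2}(\pi) \;\leq\; q_1 E_{q_1}(\pi) + q_2 E_{q_2}(\pi) + O(1),
\]
so Fekete-type convergence delivers the limit. For lower semi-continuity, observe that for each fixed scale $\delta^q$ the map $(\nu,\pi) \mapsto H_{\delta^q}(\pi\nu)$ is continuous, hence $\pi \mapsto E_q(\pi)$ is continuous by dominated convergence on $\tilde Q$; the limit $E$ then inherits lower semi-continuity from a one-sided monotonicity of the Fekete normalisation.

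Property (1) is a direct application of Birkhoff's ergodic theorem to the shift on $\Omega^\N$. For $\tilde Q$-a.e.\ $\nu$, the scenery process along $\nu$ realises the stationary distribution, and a standard entropy computation identifies the local dimension of $\pi\nu$ with the integrated quantity $E(\pi)$. Property (3) follows by the same ergodic argument combined with concavity of entropy: since $\pi\mu$ is a convex combination of the $\pi\mu^{\Delta^k(x)}$, one has
\[
H_{\delta^q}(\pi\mu) \;\geq\; \int H_{\delta^q}\bigl(\pi\mu^{\Delta^k(x)}\bigr)\, d\mu(x) + O(1),
\]
whose asymptotic normalisation converges to $E(\pi)$ because $\mu$ generates $Q$ and the $q$-sparse scenery distributions converge to $\tilde Q_q$; the inequality is strict at the level of dimension, yielding $\dim \pi\mu \geq E(\pi)$.

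The hardest step will be property (2). Here one invokes a Marstrand-type estimate in the quantitative, entropy-based form of Hochman--Shmerkin: for each typical $\nu$ and Lebesgue-a.e.\ $\pi \in \Pi_{d,k}$, the discretised entropy $H_{\delta^q}(\pi\nu)$ satisfies $H_{\delta^q}(\pi\nu)/\log(1/\delta^q) \to \min\{k,\dim \nu\}$. A Fubini exchange combined with (1) and the identity $\dim Q = \int \dim \nu \, d\tilde Q(\nu)$ then yields $E(\pi) = \min\{k,\dim Q\}$ for Lebesgue-a.e.\ $\pi$. The main obstacle is controlling the exchange between the $q \to \infty$ limit and the Lebesgue-a.e.\ $\pi$ assertion, together with passage from an \emph{almost every} projection statement on individual $\nu$ to a statement in the integrated quantity $E$. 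This is overcome by the uniform-in-$\nu$ quantitative Marstrand estimate bounding $H_{\delta^q}(\pi\nu)$ for almost every $\pi$, together with a dominated convergence argument exploiting the exact-dimensionality of $\tilde Q$-typical $\nu$ which itself is given by the ergodic theorem.
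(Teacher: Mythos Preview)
The paper does not prove this proposition: it is quoted verbatim as Theorem~8.2 of \cite{HocShm12} and used as a black box, so there is no ``paper's own proof'' to compare against.

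That said, your sketch of the Hochman--Shmerkin argument contains a sign error that actually matters. You write
\[
(q_1+q_2)\, E_{q_1+q_2}(\pi) \;\leq\; q_1 E_{q_1}(\pi) + q_2 E_{q_2}(\pi) + O(1),
\]
i.e.\ $q\mapsto qE_q(\pi)$ is (nearly) \emph{sub}additive. Fekete then gives $E(\pi)=\lim_q E_q(\pi)=\inf_q\bigl(E_q(\pi)+C/q\bigr)$, so $E$ is an infimum of continuous functions and hence \emph{upper} semicontinuous---the opposite of what is claimed. In Hochman--Shmerkin the inequality goes the other way: conditioning at scale $\delta^{q_1}$ and using stationarity of $\tilde Q$ yields
\[
(q_1+q_2)\, E_{q_1+q_2}(\pi) \;\geq\; q_1 E_{q_1}(\pi) + q_2 E_{q_2}(\pi) - O(1),
\]
so $qE_q(\pi)$ is nearly \emph{super}additive, $E(\pi)=\sup_q\bigl(E_q(\pi)-C/q\bigr)$, and lower semicontinuity follows as a supremum of continuous functions. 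Your justification ``one-sided monotonicity of the Fekete normalisation'' is too vague to catch this; the direction of the inequality is the entire content of the lower semicontinuity claim, and it is precisely this lower semicontinuity that the present paper exploits (the set $\Pi_\epsilon$ of good projections must be \emph{open}). The remaining parts (1)--(3) of your outline are broadly in the right spirit, though still at the level of a sketch rather than a proof.
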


This result is useful for proving results on projections and distance sets in our later applications. The advantage of the nice formula for the map $E$ is that it allows us to obtain the following non-linear projection theorem as well, which will be crucial in the application to distance sets.

\begin{prop}[Proposition 8.4 of \cite{HocShm12}] \label{prop8.4}
Let $\pi \in \Pi_{d,k}$. Suppose that a measure $\mu$ on $[0,1]^d$ generates an ergodic $\delta$-regular CP-chain $Q$. Then for all $C^1$ maps $g : [0,1]^d \to \R^k$ such that the maximal norm
$$\sup_{x \in \spt \mu} \|D_x g - \pi\| < \delta^q,$$
we have
$$\dim g \mu \geq E_q(\pi) - C/q,$$
where $C$ depends only on $\delta$, $d$ and $k$.
\end{prop}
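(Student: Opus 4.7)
The plan is to exploit the near-linearity of $g$ at scale $\delta^q$ via the mean value inequality, convert it into a one-step entropy inequality relating $g\mu$ to the $\pi$-projections of restrictions of $\mu$ to $\Delta^q$-cells, iterate along the filtration, and identify the ergodic limit with $E_q(\pi)$. The starting point is the linearization: since $\|D_x g - \pi\| < \delta^q$ on $\spt\mu$, the mean value inequality gives
$$\|g(y) - g(x) - \pi(y-x)\| \leq \delta^q \|y-x\|, \quad x,y \in \spt\mu.$$
On every cell $B \in \Delta^q$, which has diameter $O(\delta^q)$ by $\delta$-regularity, $g|_B$ therefore agrees with an affine map of the form $y \mapsto c_B + \pi(y)$ up to an error $O(\delta^{2q})$, an order of magnitude smaller than the scale $\delta^q$ at which we will measure entropy.

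Next I would establish a one-step entropy bound. Using the conditional entropy decomposition along $\Delta^q$ and approximating $H_{\delta^q}(g\mu|_B)$ by $H_{\delta^q}(c_B + \pi(\mu|_B))$ via the $O(\delta^{2q})$-linearization, one obtains
$$H_{\delta^q}(g\mu) \geq \int H_{\delta^q}(\pi\mu^B)\,d\mu(x) - C_0,$$
where $B = \Delta^q(x)$ and $C_0 = C_0(d,k,\delta)$ absorbs both the translations $c_B$ (which shift but do not blur entropy) and the bounded overcounting of $\delta^q$-cells introduced by the perturbation. Iterating this inequality along the refinements $\Delta^{(k+1)q} \supset \Delta^{kq}$, rescaling each cell by $T_B$ at each step, yields for every $n \geq 1$
$$H_{\delta^{nq}}(g\mu) \geq \sum_{k=0}^{n-1} \int H_{\delta^q}\bigl(\pi\mu^{\Delta^{kq}(x)}\bigr)\,d\mu(x) - nC_0.$$

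Since $\mu$ generates the ergodic CP-chain $Q$, standard ergodic-theoretic arguments for CP-chains (in the spirit of Section 7 of \cite{HocShm12}) together with the lower semicontinuity of $\nu \mapsto H_{\delta^q}(\pi\nu)$ on $\cP([0,1]^d)$ show that
$$\liminf_{n\to\infty}\frac{1}{n}\sum_{k=0}^{n-1}\int H_{\delta^q}\bigl(\pi\mu^{\Delta^{kq}(x)}\bigr)\,d\mu(x) \geq \int H_{\delta^q}(\pi\nu)\,d\tilde Q(\nu) = q\log(1/\delta) \cdot E_q(\pi).$$
Dividing the iterated bound by $nq\log(1/\delta)$ and passing to the limit yields $\liminf_n H_{\delta^{nq}}(g\mu)/(nq\log(1/\delta)) \geq E_q(\pi) - C/q$, with $C = C_0/\log(1/\delta)$. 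A standard Frostman / Shannon--McMillan--Breiman type argument along the geometric sequence $r = \delta^{nq}$ then converts this entropy bound into the Hausdorff dimension bound $\dim g\mu \geq E_q(\pi) - C/q$.

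The main obstacle is the one-step entropy comparison above. The linearization error $O(\delta^{2q})$ on each $\Delta^q$-cell is an order of magnitude smaller than $\delta^q$, but one must still verify that the push-forwards $g(\mu|_B)$ and $(c_B + \pi)(\mu|_B)$ populate essentially the same $\delta^q$-cells in $\R^k$, up to a dimension-dependent constant overlap per cell; this is a slightly delicate counting argument because the translations $c_B$ can combine constructively across different cells and so only a one-sided inequality is available. This is the only step where the $C^1$ smoothness of $g$ actually enters, and it is where the constant $C = C(\delta, d, k)$ is born; once the inequality is in place, the rest of the argument is purely entropic and dynamical.
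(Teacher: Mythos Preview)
The paper does not prove this proposition at all: it is quoted verbatim as Proposition 8.4 of \cite{HocShm12} and used as a black box in the distance-set applications (Section \ref{sec:dist}). There is therefore no proof in the present paper to compare your proposal against.

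For what it is worth, your sketch is a reasonable outline of the strategy actually carried out in \cite{HocShm12}: linearize $g$ on $\Delta^q$-cells via the mean value inequality, establish a one-step entropy comparison at scale $\delta^q$, iterate along the filtration, and invoke the ergodic theorem together with the definition of generation to identify the limit with $E_q(\pi)$. The place you flag as the ``main obstacle'' --- the one-sided entropy inequality under $O(\delta^{2q})$ perturbation --- is indeed where the work lies, and your diagnosis that only a one-sided bound is available (and that this is where the constant $C$ enters) is correct. If you want to turn this into a self-contained proof you would need to make the entropy-comparison step precise and justify the final passage from an entropy lower bound along the sequence $\delta^{nq}$ to a Hausdorff dimension bound; both are handled in \cite{HocShm12}, and the present paper simply defers to that reference.
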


The number $q$ present in the above propositions comes from the definition of generating distributions, when we require that not only the CP scenery distributions converge, but also the $q$-sparse distributions. 

\section{Encoding the torus and conformal re-scaling}

\subsection{Shift spaces and Markov partitions} \label{markov} 

First we set up some standard notation. Let
$$I=\{0,1,\dots,m-1\} \quad \text{and} \quad J=\{0,1,\dots,n-1\}$$ 
be the alphabets corresponding to the horizontal and vertical directions respectively, where we assume $m < n$. Let $I^* = \bigcup_{k \in \mathbb{N}} I^k$ and $J^* = \bigcup_{k \in \mathbb{N}} J^k$ be the sets of finite words and $I^\infty$ and $J^\infty$ be the sets of infinite words over $I$ and $J$ respectively. We will denote elements $\i \in I^* \cup I^\infty$ by $\i = i_0 i_1 \dots $ and $\j \in J^* \cup J^\infty$ by $\j = j_0 j_1 \dots$. Let $\Sigma = I^\infty \times J^\infty$ be the symbolic space of pairs. In $I^\infty$ the distance between two words $\i$ and $\i'$ is given by $m^{-|\i \wedge \i'|}$ and between words $\j$ and $\j'$ in $J^\infty$ by $n^{-|\j \wedge \j'|}$ where $\i \wedge \i' \in I^* \cup \{\omega\}$ is the common part of the words $\i$ and $\i'$ and where $\omega$ is the empty word. In the product space $\Sigma$, we use the maximum metric obtained from $I^\infty$ and $J^\infty$. In a slight abuse of notation we let $\sigma$ denote the left shifts on both of the spaces $I^\infty$ and $J^\infty$.  For $\i \in I^*$ we define the cylinder $[\i] \subseteq I^\infty$ as
\[
[\i] = \{ \i \i' : \i' \in I^\infty\},
\]
with cylinders in $J^\infty$ defined similarly. Finally, by slightly abusing the notation, let $\pi_1 : \Sigma \to I^\infty$ and $\pi_2 : \Sigma \to J^\infty$ be the coordinate projections.

The natural Markov partition $\xi$ related to the maps $T_{m,n}$ on the torus can be given by decomposing $\T^2$ to $mn$ rectangles of width $1/m$ and height $1/n$ orientated with the  coordinate axes. These elements are closed but their interiors are disjoint. The generation $k$ refinement of the Markov partition $\xi$ is then defined by the join
$$\xi_k = \bigvee_{i=0}^{k-1} T^{-i}_{m,n} \xi$$
so that $\xi_1 = \xi$, see for example \cite{walters} for the definition. Now each element $R \in \xi_k$ is a rectangle of width $1/m^k$ and height $1/n^k$, and by enumerating each element of $\xi$ with indices $i \in I$ and $j \in I$, we can encode $R = R(\i,\j)$ for some finite sequences $\i \in I^k$ and $\j \in J^k$. Since the diameters of each element in $\xi_k$ is decreasing to $0$ as $k \to \infty$, we have that $(\i,\j) \in \Sigma$ corresponds to a unique point $\xi(\i,\j) \in \T^2$, which defines a \textit{coding map} or \textit{symbolic projection} $\xi : \Sigma \to \T^2$ mapping $\xi(\Sigma) = \T^2$.

\begin{remark} \label{adicproj} Currently, the notion of CP chain is only defined in the Euclidean geometry of $\R^d$. However, in our case the measures $\mu$ we study in the torus can be also identified as measures supported on the square $[0,1]^2$ as they are defined symbolically. By abusing the notation, we can define another map $\xi : \Sigma \to [0,1]^2$ by considering the $m$-adic and $n$-adic expansions the sequences $(\i,\j) \in \Sigma$ generate:
$$\xi(\i,\j) := \sum_{k = 0}^\infty (i_km^{-k},j_kn^{-k}).$$
This map is then a continuous transformation from $\Sigma \to [0,1]^2$ with respect to the Euclidean topology. Moreover, then we can similarly define the elements $R(\i',\j') \subset [0,1]^2$ for $\i' \in I^*$ and $\j \in J^*$ as push-forwards
$$R(\i',\j') := \xi([\i'] \times [\j']).$$
All our measures we consider on $\T^2$ (in the next section) are defined symbolically in $\Sigma$, and then projected down to the torus. Thus when we talk about generation of CP chains of measures defined on the torus, we will consider the appropriate measure defined on $[0,1]^2$ (defined via the projection $\xi : \Sigma \to [0,1]^2$).
\end{remark}

\subsection{Bernoulli measures and disintegration} \label{bernoulli}

Having finally set up a symbolic space and a Markov partition to encode the dynamics of $T_{m,n}$, we can define Bernoulli measures for that action.

\begin{definition}[Bernoulli measures]\label{bernoullimeasures} A measure $\mu$ on the symbolic space $\Sigma$ is \textit{Bernoulli} if there are weights $p_{ij} \in [0,1]$, $i \in I$ and $j \in J$ such that
$$\mu([\i i] \times [\j j]) = p_{ij}\mu([\i] \times [\j ]), \quad i \in I, j \in J,$$
for any given words $\i \in I^*$ and $\j \in J^*$. A measure $\mu'$ on $\T^2$ (or $\R^2$) is \textit{Bernoulli for} $T_{m,n}$ if $\mu' = \xi \mu$ for some Bernoulli measure $\mu$ on $\Sigma$. Note that we will often use the same notation for $\mu$ and $\mu'$ in the proceedings.
\end{definition}

To take into account the faster expanding direction in the symbolic space, we create a disintegration of the Bernoulli measure $\mu$ along the fast expanding direction.  For a Bernoulli measure this has the following explicit representation. 

\begin{notation}[Disintegration and conditional measures $\mu_\i$]
 For $i \in I$ write
$$q_i = \sum_{j \in J} p_{ij} \quad \text{and} \quad p_i(j) = \begin{cases}
p_{ij}/q_i, & \text{if } q_i>0\\
0, & \text{if } q_i=0
\end{cases}, \quad j \in J.$$ 
The conditional measure $\mu_\i \in \cP(J^\infty)$ of $\mu$ at $\i \in I^\infty$ is determined by
$$\mu_\i([\j]) := p_{i_0}(j_0)\cdots p_{i_{k-1}}(j_{k-1}), \quad \j \in J^k \text{ and } k \in \N,$$
where $[\j] \subset J^\infty$ is the symbolic cylinder corresponding to $\j$.  Let $\pi_1$ be the projection of $\Sigma$ onto the first coordinate. Given a Bernoulli measure on $\Sigma$, the measure $\pi_1 \mu$ is the Bernoulli measure on $I^\infty$ with weights $q_0,q_1,\dots,q_{m-1}$ and we can recover $\mu$ by integrating over the conditional measure $\mu_\i$, i.e.
$$\mu(E) = \int  \mu_\i(E_\i) \, d\pi_1 \mu(\i)$$
for Borel $E  \subset \Sigma$ and where $E_\i = \{ \j \in J^\infty : (\i , \j) \in E\}$.
\end{notation}

\subsection{Approximate squares and the associated CP-chains} \label{approximatesquares}

Now we present the construction of the CP-chain we are going to use. The first step is to construct a regular filtration of $[0,1]^2$ using rectangles with bounded eccentricities. The basic sets used in this filtration are widely known as \textit{approximate squares}. We follow the same definition of a partition operator as Hochman and Shmerkin use in \cite[Section 10.2]{HocShm12} when dealing with product measures.

\begin{defn}[Partition operator given by the approximate squares] Given $t \in [0,1]$ we write
$$R_t = [0,1] \times [0,n^t].$$
Let $\cE$ be the collection of all boxes in $\R^2$ that can be obtained from some $R_t$, $t \in [0,1)$, using a re-scaling and a translation. We define a partition operator $\Delta$ first on the sets $R_t$, which then defines $\Delta$ also on the whole of $\cE$ if we extend it by a similitude. Define the angle
$$\alpha := \frac{\log m}{\log n}.$$
Depending on the relationship between $t$ and $\alpha$ we obtain different partitions.
\begin{itemize}
\item[(1)] First we split $R_t$ into $m$ rectangles $R_t(i)$, $i \in I$, of width $1/m$ and height $n^t$. 
\item[(2)] If $t \geq 1-\alpha$, then further partition $R_t(i)$ into $n$ rectangles $R_t(i,j)$, $j \in J$, of height $n^{t-1}$.
\end{itemize}
Here we make the choices of rectangles to be so that the obtained rectangles are `upper-right half-open' that is, of the form $[a,b) \times [c,d)$, except that when we touch the boundary of $R_t$, we include the boundary $[a,b] \times [c,d]$. Thus $\Delta(R)$ is either a partition of $R$ by $m$ or $mn$ rectangles that are similar to $R_{\phi(t)}$, where $\phi : \T \to \T$ is the rotation
$$\phi(t) = t+\alpha \mod 1, \quad t \in \T.$$
This way we obtain an $\tfrac{1}{m}$-regular partition operator $\Delta$. Notice that all the rectangles $R_t \subset R_1$ for $t \in [0,1)$.
\end{defn}

We deviate slightly from the notation used in \cite{HocShm12} on the rotation part. There a $\log m$ rotation was used on $[0,\log n)$, but to slightly lighten the notation in the proofs, we have renormalised this to the $\alpha$-rotation on $[0,1)$.

The rotation $\phi$ of angle $\alpha$ will control the eccentricities of the rectangles when acting with $\Delta$. For $k \in \N$ and $t \in \T$, let $\ell_t(k)$ be the number of times we have revolved around $\T$ during the first $k$ steps. In other words,
$$\ell_t(k) = \sharp \{0 \leq k' \leq k-1 : \phi^{k'}(t) \geq 1-\alpha\}.$$
Thus $\ell_t(k)$ is the integer part of $\alpha k + t$ and so $k - \ell_t(k) \to \infty$ as $k \to \infty$ since $m < n$. Then by the definition of $\Delta$ and the rectangles induced by the Markov partition, we have
$$\overline{\Delta^k(\xi(\i,\j))} = R(\i|_k,\j|_{\ell_0(k)}).$$
Having fixed the numbers $m < n$, let $S_t : \R^2 \to \R^2$ be the hyperbolic matrix
$$S_t = \begin{pmatrix}n^{-t/2} & 0 \\ 0 & n^{t/2}\end{pmatrix}$$
which maps $[0,1]^2$ onto the normalised box $R_t^*$, recall Definition \ref{def:box}.

\begin{definition}[CP-chains $Q$ and $Q_\alpha$] \label{ourchain} Let $\mu$ be a Bernoulli measure for $T_{m,n}$. Depending on whether $\alpha$ is irrational or not, $\mu$ will generate different CP-chains.

\begin{itemize}
\item[(1)] In the irrational case, we obtain a CP-chain $Q$, whose measure component is defined to be the distribution of the random measure
$$S_t(\pi_1 \mu \times \mu_x),$$
when $x \sim \pi_1 \mu$ and $t \sim \lambda$, where $\mu_x$ is the conditional measure of $\mu$ with respect to the vertical fiber at $x \in [0,1]$ and $\lambda$ is the Lebesgue measure on $[0,1]$. 

\item[(2)] When $\alpha = p/q \in \Q$ with $gcd(p,q) = 1$, we obtain a CP-chain $Q_\alpha$, whose measure component is defined to be the distribution of the random measure
$$S_t(\pi_1 \mu \times \mu_x),$$
when $x \sim \pi_1 \mu$ and $t \sim \tau_\alpha$, where $\mu_x$ is the conditional measure of $\mu$ with respect to the vertical fiber at $x \in [0,1]$ and
$$\tau_\alpha := \frac{1}{q}\sum_{k = 0}^{q-1} \delta_{\phi^k(0)}.$$ 
\end{itemize}

Now the actual CP-chain is defined by choosing first the measure $\nu$ according to the measure component defined above, and then for $B \in \cE$ and $A \in \Delta(B)$, the transition probability to move from $(B,\nu)$ to $(A,\nu^A)$ is set to be $\nu(A)$.
\end{definition}

\section{Proof of the main result}
\label{sec:symbolic}
In this section we prove Theorem \ref{thm:main}. There are several ingredients required and so we first explain the outline of the proof. The main bulk of the proof is certainly in proving that $\mu$ generates $Q$ and for that we proceed with the following steps in mind:

\begin{itemize}
\item[(1)] Reformulate the desired result in terms of a symbolic skew product system $(\T \times \Xi,Z)$ driven by a rotation, where the rotational part takes care of the change in eccentricities of the approximate squares.  This will culminate in the statement of Proposition \ref{thm:generatedirrational} and a discussion of why proving this gives Theorem \ref{thm:main}.
\item[(2)] Introduce a \textit{parametrisation} of micro- and minimeasures of $\mu$ defined by the slower expanding direction $I^\infty$, which relies heavily on the Bernoulli properties of the measure. This allows us to reduce the magnification dynamics in the space of measures to a question about shift spaces.
\item[(3)] To obtain the symbolic reformulation Proposition \ref{thm:generatedirrational} we first argue that it suffices to use a class of simple test functions, rather than all continuous functions, which is possible by the special choices of the simple functions.
\item[(4)] The advantage of using simple functions in this context is that they take into account the symbolic parametrisation of the mini- and micromeasures. Then we just prove an equivalent statement in shift spaces, which is a law of large numbers for weakly correlated random variables.
\end{itemize}

\subsection{Symbolic reformulation}
\label{subsection1}

First we identify the transition dynamics induced by the approximate square partition operator $\Delta$ to introduce a dynamical system on the phase space 
$$\Xi = \{(\i,\j,\nu) \in \Sigma \times \cP(\Sigma) : (\i,\j) \in \spt \nu\}.$$ 

\begin{notation}[Symbolic magnification] The \textit{blow-up} $\nu^{C}$ of a measure $\nu$ in $\mathcal{P}(\Sigma)$ to a cylinder $C = [\i] \times [\j]$, $\i \in I^*$, $\j \in J^*$, with $\nu(C) > 0$, can be defined by restricting $\nu$ to $C$, shifting the restriction $\nu|_C$ back to the root of the tree and normalizing with $\nu(C)$. In other words,
$$\nu^C([\i'] \times [\j']) = \frac{1}{\nu(C)}\nu([\i\i'] \times [\j\j']) \quad \text{for $\i' \in I^*$ and $\j' \in J^*$.}$$ 
\end{notation}

Due to the non-conformality of the map $T_{n,m}$, in order to keep the eccentricity of the rescalings of the measure bounded, we are required to iterate the two directions at different rates.  

\begin{notation}(Magnification operator $Z$) For $t \in \T$ and $(\i,\j) \in \Sigma$ write
$$\sigma_t = \begin{cases}
\sigma, & \text{if } t \geq 1-\alpha\\
\id, & \text{otherwise.}
\end{cases} \quad \text{and} \quad C_t(\i,\j) = \begin{cases}
[i_0] \times [j_0], & \text{if } t \geq 1-\alpha\\
[i_0] \times J^\infty, & \text{otherwise.}
\end{cases}$$ 
Define a transformation $Z : \T \times \Xi \to \T \times \Xi$ as a skew product
$$Z(t,\i,\j,\nu) = (\phi(t),\sigma(\i),\sigma_t(\j),\nu^{C_t(\i,\j)}).$$
For $k \in \N$ define the iterated cylinder set
$$C_t^k(\i,\j) = [\i|_k] \times [\j|_{\ell_t(k)}].$$
If we move from the point $Z(t,\i,\j,\nu)$ to $Z^2(t,\i,\j,\nu)$ in the phase space, the measure coordinate $\nu^{C_t(\i,\j)}$ iterates to $\nu^{C_t^2(\i,\j)}$ since by checking the definition of the rescaled measure, we see that
$$(\nu^{C_t(\i,\j)})^{C_{\phi(t)}(\sigma(\i),\sigma_t(\j))} = \nu^{C_t^2(\i,\j)}.$$
Thus for $k \in \N$ and starting with the original Bernoulli measure $\mu$, the $k$th iterate is
$$Z^k(t,\i,\j,\mu) = (\phi^k(t),\sigma^k(\i),\sigma^{\ell_t(k)}(\j),\mu^{C_t^k(\i,\j)}).$$
The measures $\mu^{C_t^k(\i,\j)}$ arising from this process are called \textit{symbolic minimeasures} (\emph{of} $\mu$). Moreover, any accumulation point of the orbit $\{\mu^{C_t^k(\i,\j)}\}_{k \in \N}$ is called a \textit{symbolic micromeasure} (\emph{of} $\mu$).
\end{notation}

Given $t \in \T$ and $(\i,\j)$, the distribution of the orbit $\{Z^{k}(t,\i,\j,\mu)\}_{k \in \N}$ depends highly on the algebraic properties of the rotation $\phi$ we impose on the system. We obtain two different distributions depending on whether $\alpha = \tfrac{\log m}{\log n}$ is rational or irrational. If $\alpha$ is irrational, the $t$-coordinate equidistributes to Lebesgue measure $\lambda$ and when $\alpha$ is rational we equidistribute to a measure supported on a periodic orbit.

\begin{prop}
\label{thm:generatedirrational} Let $\mu$ be a Bernoulli measure on $\Sigma$. Let $P$ be the distribution of the triple $(\i,\j,\pi_1 \mu \times \mu_\i)$ where $\j \sim \mu_\i$ and $\i \sim \pi_1 \mu$. Suppose $\alpha$ is irrational. For any $q \in \N$ we have
\begin{equation}\label{eq:stronggen}\frac{1}{N} \sum_{k = 0}^{N-1} \delta_{Z^{qk}(t,\i,\j,\mu)} \to \lambda \times P \quad \text{as } N \to \infty\end{equation}
at every $t \in \T$ and $\mu$ almost every $(\i,\j)$.

If $\alpha = p'/q'$ is rational with $\gcd(p',q') = 1$, then \eqref{eq:stronggen} holds with $\lambda$ replaced with the measure
$$\tau_{t,q} = \frac{\gcd(q,q')}{q'} \sum_{k = 0}^{q'/\gcd(q,q') - 1} \delta_{\phi^{k\gcd(q,q') } (t)}$$
supported on $q'/\gcd(q,q')$ periodic orbit depending on $q$.

Moreover, the distributions $\lambda \times P$ and $\tau_{t,q} \times P$ are $Z$ invariant and $Z$ ergodic.
\end{prop}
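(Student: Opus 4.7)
The plan is to follow the four-step outline sketched in the excerpt.

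My first step is to exploit the Bernoulli product structure of $\mu$. A direct cylinder calculation will show that for any $[\i'']\times[\j'']$ with $|\j''|\leq k-\ell_t(k)$,
\[
\mu^{C_t^k(\i,\j)}([\i'']\times[\j'']) = \pi_1\mu([\i''])\cdot\mu_{\sigma^{\ell_t(k)}(\i)}([\j'']).
\]
Since $k-\ell_t(k)\to\infty$, this asymptotically identifies $\mu^{C_t^k(\i,\j)}$ with $\pi_1\mu\times\mu_{\sigma^{\ell_t(k)}(\i)}$ in the weak topology on $\cP(\Sigma)$, so that $Z^{qk}(t,\i,\j,\mu)$ becomes weakly close to the explicit quadruple $(\phi^{qk}t,\sigma^{qk}\i,\sigma^{\ell_t(qk)}\j,\pi_1\mu\times\mu_{\sigma^{\ell_t(qk)}\i})$ for $k$ large. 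Note that the map $(t,\i,\j)\mapsto(t,\i,\j,\pi_1\mu\times\mu_\i)$ pushes $\lambda\times\mu$ forward to $\lambda\times P$, identifying the target distribution.

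Next, to establish weak convergence, by Portmanteau and density it will suffice to test against functions of the product form
\[
f(t,\i,\j,\nu) = g(t)\,\mathbf{1}_{[\i'']\times[\j'']}(\i,\j)\,F(\nu),
\]
with $g\in C(\T)$ and $F$ a continuous function of a fixed finite list of product-cylinder masses of $\nu$. Via the parametrisation, $F(\mu^{C_t^{qk}})$ becomes, for $k$ large, a bounded continuous function $H$ of a fixed initial block of $\sigma^{\ell_t(qk)}\i$, reducing the problem to the pointwise convergence of
\[
\frac{1}{N}\sum_{k=0}^{N-1} g(\phi^{qk}t)\,\mathbf{1}_{[\i'']}(\sigma^{qk}\i)\,\mathbf{1}_{[\j'']}(\sigma^{\ell_t(qk)}\j)\,H(\sigma^{\ell_t(qk)}\i).
\]
The key dynamical ingredient is that, because $k-\ell_t(k)\to\infty$, for large $k$ the letter windows in $\i$ at positions $[qk,qk+|\i''|)$ and $[\ell_t(qk),\ell_t(qk)+L)$ are disjoint, so the Bernoulli property factorises their joint distribution exactly. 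Combined with uniform Weyl equidistribution $\phi^{qk}(t)\to\lambda$ in the irrational case, and its periodic-orbit analogue in the rational case, one computes the expected value of the summand, which evaluates to $\int f\,d(\lambda\times P)$ (respectively $\int f\,d(\tau_{t,q}\times P)$). A standard second-moment variance estimate, valid because Bernoulli correlations vanish identically once the index windows decouple, combined with a Borel--Cantelli argument, upgrades $L^2$-convergence of the Birkhoff sums to pointwise convergence at every $t\in\T$ and $\mu$-a.e.\ $(\i,\j)$.

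Finally, $Z$-invariance of $\lambda\times P$ (resp.\ $\tau_{t,q}\times P$) will be automatic, as any pointwise accumulation point of empirical averages of a measurable dynamical system is invariant; ergodicity will follow because pointwise convergence of empirical averages to a single distribution on a set of full measure forces that distribution to be extremal in the simplex of invariant measures. The rational case is structurally identical, the only change being that $\phi^{qk}(t)$ equidistributes on a period-$q'/\gcd(q,q')$ orbit rather than on $\T$. The main technical obstacle, I expect, will be obtaining the statement at \emph{every} $t\in\T$ rather than at almost every $t$: the threshold beyond which the index windows decouple, and the variance bounds that follow, must be chosen uniformly in $t$; exploiting $\ell_t(k)=\lfloor\alpha k+t\rfloor$ and the uniform-in-$t$ Weyl estimate this is achievable, but the bookkeeping of the variance double sum against the $t$-dependence of $\ell_t$ will be the most delicate piece of the argument.
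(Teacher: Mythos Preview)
Your strategy for the main convergence statement matches the paper's closely: both parametrise the symbolic minimeasures via the map $\i\mapsto\pi_1\mu\times\mu_\i$ (the paper's $\Psi$), reduce to cylinder-type test functions, and obtain a.s.\ convergence from a second-moment estimate exploiting that the Bernoulli measure decorrelates once the relevant index windows separate. The paper packages the variance-to-a.s.\ step via Lyons' strong law for weakly correlated variables rather than Borel--Cantelli, but this is cosmetic. One refinement worth noting: the windows that must decouple are not just two; for each pair $(i,j)$ there are four overlap conditions involving $k_i,k_j,\ell_i,\ell_j$ (coming from the $\sigma^{k}\i$-block, the $\sigma^{\ell}\i$-block, and the $\sigma^\ell\j$-block), and the paper tracks all four to get the uniform bound.

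Where your proposal diverges, and where there is a genuine gap, is in the invariance and ergodicity step. For invariance you invoke the fact that weak limits of empirical measures are invariant, but this needs $Z$ continuous, and $Z$ is discontinuous at $t=1-\alpha$ (the fibre map switches from $\id$ to $\sigma$); this is repairable since the discontinuity set is $\lambda$-null, but it is not automatic. More seriously, for ergodicity you invoke the principle that convergence of Birkhoff averages to a single limit on a full-measure set forces extremality. That principle requires full measure with respect to the \emph{limit} distribution $\lambda\times P$. But the convergence you established is for orbits starting at $(t,\i,\j,\mu)$, and this point is not in $\spt(\lambda\times P)$, since $\mu$ is generally not of the product form $\pi_1\mu\times\mu_\i$. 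To make your argument work you must rerun the variance computation for starting points $(t,\i,\j,\pi_1\mu\times\mu_\i)$ with $(\i,\j)\sim\mu$; this is indeed doable (and cleaner, since the measure coordinate along the orbit is then exactly $\pi_1\mu\times\mu_{\sigma^{\ell_t(k)}\i}$ with no asymptotic identification needed), but it is a separate argument, not a corollary of what you wrote. The paper sidesteps the issue entirely: it verifies $Z$-invariance of $\lambda\times P$ by a direct cylinder-set computation and proves ergodicity by checking a mixing-type condition $\lim_N\tfrac{1}{N}\sum_k(\lambda\times P)(A\cap Z^{-k}B)=(\lambda\times P)(A)(\lambda\times P)(B)$ on product cylinders.
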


\subsection{From symbolic to geometric model} \label{symbgeom}

Let us now show how Proposition \ref{thm:generatedirrational} gives Theorem \ref{thm:main}. The main idea is that the minimeasures $\mu^{\Delta^k(x,y)}$ are precisely the push forwards of the symbolic minimeasures $\mu^{C_0^k(\i,\j)}$ in $\cP(\Sigma)$ under the projection $\xi$. The reason that this is true is that we may assume that the measure of the boundary of any approximate square is zero, otherwise $\mu$ would be supported on a vertical or horizontal line.  This one to one correspondence passes to micromeasures, since micromeasures are just weak limits of minimeasures. Let us now make this more precise. 

First we need the following lemma on dealing with discontinuous test functions, which is proved for example in \cite[Theorem 2.7]{Billingsley1968}.

\begin{lemma} \label{weakbillingsley}
Let $X$ be compact metric space. If $\mu_N\to \mu$ weakly for probability measures $\mu_N$ and $\mu$ on $X$, and $f : X\to \R$ is $\mu$ almost everywhere continuous, then $\int f\,d\mu_N \to \int f \,d\mu$.
\end{lemma}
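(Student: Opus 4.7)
The plan is to reduce the lemma to the classical Portmanteau theorem, taking advantage of the fact that $f$ need only be continuous $\mu$-almost everywhere. Without loss of generality I assume $f$ is bounded, since compactness of $X$ combined with $\mu$-a.e.\ continuity of $f$ means $f$ is bounded on a $\mu$-conull set, and the general statement then follows by a routine truncation argument (modifying $f$ on a $\mu$-null set does not affect $\int f \, d\mu$, but one has to be slightly careful that the $\mu_N$ integrals still make sense; if $f$ is assumed bounded from the outset this issue disappears).

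First I would invoke the standard Portmanteau theorem, which gives the equivalence of weak convergence $\mu_N \to \mu$ with the property that $\mu_N(A) \to \mu(A)$ for every Borel set $A$ with $\mu(\partial A) = 0$. Let $D_f \subset X$ be the set of discontinuities of $f$; by hypothesis $\mu(D_f) = 0$. For each real $t$, the topological boundary of the sublevel set $\{f \le t\}$ is contained in $D_f \cup f^{-1}(\{t\})$. Since the measures $\mu(f^{-1}(\{t\}))$ are non-zero for at most countably many $t$, the set
\[
G := \{ t \in \R : \mu(\partial \{f \le t\}) = 0 \}
\]
is cocountable in $\R$, hence dense.

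Next I would push forward: the Portmanteau criterion applied along $G$ yields
\[
\mu_N(f \le t) \longrightarrow \mu(f \le t) \qquad \text{for every } t \in G,
\]
which is precisely the statement that the pushforwards $f\mu_N$ on $\R$ converge weakly to $f\mu$. Since $f$ is bounded, all these measures are supported in a fixed compact interval $[-M, M]$, so the identity map $t \mapsto t$ on $\R$ is a bounded continuous function that is $f\mu$-integrable. Applying the standard weak-convergence formulation of Portmanteau on $\R$ to this identity function,
\[
\int f \, d\mu_N = \int t \, d(f\mu_N)(t) \longrightarrow \int t \, d(f\mu)(t) = \int f \, d\mu,
\]
which is the desired conclusion. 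The only subtle point is the density of the set $G$ of continuity levels of the distribution of $f$, but this is automatic from the almost-everywhere continuity of $f$; no serious obstacle arises.
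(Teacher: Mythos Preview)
The paper does not actually prove this lemma; it simply cites \cite[Theorem 2.7]{Billingsley1968}. Your Portmanteau-plus-pushforward argument is a perfectly standard proof of that theorem and is correct once $f$ is assumed bounded: the inclusion $\partial\{f\le t\}\subseteq D_f\cup f^{-1}(\{t\})$ is right, the set $G$ is cocountable, and pushing forward to $[-M,M]$ then integrating the identity is clean.

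The only genuine issue is your opening reduction. The claim that ``compactness of $X$ combined with $\mu$-a.e.\ continuity of $f$ means $f$ is bounded on a $\mu$-conull set'' is false: take $X=[0,1]$ with Lebesgue measure, $f(0)=0$ and $f(x)=1/x$ for $x>0$; then $f$ is continuous off a null set but unbounded on every set of full measure. So the truncation paragraph does not work. This does not matter for the paper's purposes, since Billingsley's Theorem 2.7 itself is stated for bounded $f$, and in the single place the lemma is invoked (the function $f\circ\Theta$ with $f$ continuous on the compact space $\cP(R_1)$) the test function is automatically bounded. In short: drop the first paragraph and state the hypothesis $f$ bounded; the rest is fine and matches the standard proof the paper is citing.
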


\begin{proof}[Proof of Theorem \ref{thm:main}] Define $E$ to be the union of all the boundaries of rectangles obtained in the Markov partition. In other words, we define the countable union
$$E = \bigcup_{\i \in I^*,\j \in J^*} \partial R(\i,\j),$$
recall the notation used in Section \ref{markov}. Let $\mu' = \xi \mu$ be a Bernoulli measure for $T_{m,n}$ (defined on $\R^2$, recall Remark \ref{adicproj} and Definition \ref{bernoullimeasures}) for some Bernoulli measure $\mu$ on $\Sigma$. We will assume from now on that $\mu'$ is not supported on any horizontal or vertical line.  If it was, then it would be a self-similar measure supported on an isometric copy of the unit interval which would simplify the subsequent arguments greatly, though the same results apply. A simple consequence of $\mu'$ not being contained in any horizontal or vertical line is that $\mu'(E) = 0$, i.e. the boundaries of all approximate squares carry zero weight.  From this it follows from the definition of the minimeasures that
\begin{align}\label{minisymb}\mu'^{\Delta^k(\xi(\i,\j))} = \mu'^{R(\i|_k,\j|_{\ell_0(k)})} = S_{\phi^k(0)}(\xi\mu^{C_0^k(\i,\j)})\end{align}
for any $k \in \N$ as $\Delta^k(\xi(\i,\j)) = R(\i|_k,\j|_{\ell_0(k)})$ except in at most a $\mu'$ null set. 

A similar property also holds for the measures in the support of the CP chain. First of all, the projection $\pi_1 \mu'$ is a Bernoulli measure on the unit interval with respect to the $m$-adic partition which does not contain any atoms (since we assumed $\mu'(E) = 0$). Moreover, as $\mu'(E) = 0$, the fibre $\mu'_{x}$ gives no mass to $n$-adic rational numbers for $\pi_1 \mu'$ typical $x$, so we must have $(\pi_1 \mu' \times \mu'_{x})(E) = 0$ for these $x$. Hence
\begin{align}\label{microsymb}\pi_1 \mu' \times \mu_{\xi_1(\i)}' = \xi(\pi_1 \mu \times \mu_\i),\end{align}
for $\pi_1 \mu$ typical $\i \in I^\infty$, where $\xi_1(\i) = \sum_{k = 0}^\infty i_km^{-k}$ is the point in $[0,1]$ with $m$-adic expansion given by $\i$.

Let us prove that along the orbit $(\mu'^{\Delta^k(\xi(\i,\j))})_{k \in \N}$ we generate the measure component  of the desired CP-chain. 
Define $\Theta : \T \times \Xi \to \cP(R_1)$ by
$$\Theta(t,\i,\j,\mu) = S_t(\xi \mu).$$
Notice that $\Theta$ is independent of the $\Sigma$ coordinate. However, $\Theta$ is not continuous in the $\T$-coordinate at $t = 0$, but this is the only way discontinuities can occur. Let us consider separately the case when $\alpha$ is irrational. Let $f : \cP(R_1) \to \R$ be a continuous test function and fix $q \in \N$. Then we have by the continuity of $\xi$ that
$$(\lambda \times P)((t,\i,\j,\nu) : f \circ \Theta \text{ is discontinuous at }(t,\i,\j,\nu)) = 0.$$
Thus by Proposition \ref{thm:generatedirrational} and Lemma \ref{weakbillingsley}, we have
\begin{align}\label{convergence}\frac{1}{N} \sum_{k = 0}^{N-1} (f\circ \Theta)(Z^{qk}(0,\i,\j,\mu)) \to \int f \circ \Theta \, d(\lambda \times P)\end{align}
at $\mu$ almost every $(\i,\j) \in \Sigma$. On the other hand, by \eqref{microsymb} the push-forward
$$\Theta(\lambda \times P) = \tilde Q$$
and also
$$\Theta(Z^{qk}(0,\i,\j,\mu)) = S_{\phi^{qk}(0)}(\xi \mu^{C_t^{qk}(\i,\j)}) = \mu'^{R(\i|_{qk},\j|_{\ell_0(qk)})} = \mu'^{\Delta^{qk}(\xi(\i,\j))}$$
so the proof follows as the full $\mu$ measure set where \eqref{convergence} holds gets mapped onto a full $\mu'$ measure set under $\xi$. This completes the proof of Theorem \ref{thm:main} in the irrational $\alpha$ case as the measure component $\tilde Q$ in both rational and irrational case is given by the push-forward $\Theta$ of an $Z$ ergodic and invariant distribution $\lambda \times P$, so we obtain stationarity and ergodicity for the chain $Q$ constructed.

The case when $\alpha$ is rational is similar, but slightly simpler, and so we omit the details.  The main difference is that the measures $\tau_{t,q}$ are supported on discrete sets (periodic orbits of $t$ under rotation by $q\alpha$) and so the test functions are automatically continuous so we do not need Lemma \ref{weakbillingsley}. 
\end{proof}

Thus it suffices to prove Proposition \ref{thm:generatedirrational} which will be the sole purpose of the next few sections.

\subsection{Parametrisation of mini- and micromeasures}
\label{subsection2}

Let us first introduce a parametrisation of the measures that arise or accumulate in the measure coordinate of the orbit $\{Z^k(t,\i,\j,\mu)\}_{k \in \N}$.

\begin{notation} Let $\Psi,\Psi_k : I^\infty \to \cP(\Sigma)$ be the maps defined by
$$\Psi(\i) =\pi_1 \mu \times \mu_\i \quad \text{and} \quad \Psi_k(\i)([\i'] \times [\j']) = \frac{1}{\pi_1 \mu[\i|_k]}\int_{[\i|_k \i']} \mu_\v[\j'] \,d\pi_1 \mu(\v)$$
for $\i \in I^\infty$ and $\i' \in I^*$ and $\j' \in J^*$.
\end{notation}

We notice that the image $\Psi_{k-\ell_t(k)}(I^\infty)$ is exactly the set of all $k$th generation minimeasures $\mu^{C_t^k(\i,\j)}$ of $\mu$ for large enough $k$. When blowing-up at $(\i,\j)$, provided we have iterated the blow-up enough times, we end up having exactly measures of the form $\Psi_{k-\ell_t(k)}(\i')$ for some other $\i' \in I^\infty$ depending on the history. The image $\Psi(I^\infty)$ is then a compact Cantor set in the space of measures and it consists of exactly the micromeasures of $\mu$ when evolving in $Z$. We will formalize all of this in the following lemma.

\begin{lemma} \label{miniandmicro}
 
\begin{itemize}\item[(1)] If $(t,\i,\j) \in \T \times \Sigma$ and $k > \ell_t(k)$, then the measure coordinate of $Z^k(t,\i,\j,\mu)$ is
$$\mu^{C_t^k(\i,\j)} = \Psi_{k-\ell_t(k)}(\sigma^{\ell_t(k)}(\i)).$$
\item[(2)] If $\i \in I^\infty$, then $\Psi_k(\i)$ and $\Psi(\i)$ agree on all cylinders of generation at most $k$. More precisely, we can say that if $\i'$ and $\j'$ are finite words in $I^*$ and $J^*$ respectively, then for any $k \geq |\j'|$ we have
$$\Psi_k(\i)([\i'] \times [\j']) = \Psi(\i)([\i'] \times [\j']).$$
In particular this shows that $\Psi_k(\i) \to \Psi(\i)$ as $k \to \infty$.
\end{itemize}
\end{lemma}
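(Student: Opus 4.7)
The plan is to prove both parts by direct computation, using that the Bernoulli hypothesis makes $\mu$ explicit on cylinders. The key identity I will use repeatedly is the disintegration $\mu(E)=\int \mu_\i(E_\i)\,d\pi_1\mu(\i)$, combined with the facts that $\pi_1\mu$ is itself Bernoulli on $I^\infty$ with weights $q_i=\sum_j p_{ij}$ and that $\mu_\i([\j])=\prod_{s<|\j|}p_{i_s}(j_s)$ depends only on the first $|\j|$ letters of $\i$. As an immediate consequence, whenever the $J$-cylinder has length no larger than the $I$-cylinder, the integrand in the disintegration is constant on the domain of integration and one obtains a factorisation of $\mu$ on that product cylinder in closed form.

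For part (2), fix $\i\in I^\infty$, $\i'\in I^*$, $\j'\in J^*$ and $k\ge|\j'|$. Since $|\j'|\le k\le k+|\i'|$, the above observation applied to the product cylinder $[\i|_k\i']\times[\j']$ shows that $\mu_\w([\j'])=\mu_\i([\j'])$ is constant for $\w\in[\i|_k\i']$, so
$$\Psi_k(\i)([\i']\times[\j']) = \frac{\pi_1\mu([\i|_k\i'])}{\pi_1\mu([\i|_k])}\cdot\mu_\i([\j']) = \pi_1\mu([\i'])\cdot\mu_\i([\j']) = \Psi(\i)([\i']\times[\j']),$$
using the Bernoulli factorisation $\pi_1\mu([\i|_k\i'])=\pi_1\mu([\i|_k])\pi_1\mu([\i'])$. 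The weak convergence $\Psi_k(\i)\to\Psi(\i)$ then follows because cylinders are clopen in the compact space $\Sigma$ and form a countable base for the Borel $\sigma$-algebra: any continuous $f:\Sigma\to\R$ is a uniform limit of step functions constant on cylinders of a fixed generation $N$, and on each such step function the two measures integrate identically once $k\ge N$.

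For part (1), set $\ell=\ell_t(k)$, noting that $0\le\ell<k$ by hypothesis. Introducing the shifted prefix $\w_0=(\sigma^\ell\i)|_{k-\ell}=i_\ell\cdots i_{k-1}$, the claim becomes equivalent to the four-factor identity
$$\mu([\i|_k\i']\times[\j|_\ell\j'])\cdot\pi_1\mu([\w_0])=\mu([\i|_k]\times[\j|_\ell])\cdot\mu([\w_0\i']\times[\j']).$$
The denominator $\mu([\i|_k]\times[\j|_\ell])$ simplifies immediately by the observation from the first paragraph, and after cancelling the common $q$- and $p$-weights the identity reduces to the fact that reading $\i|_k\i'$ from position $\ell$ produces exactly the word $\w_0\i'$, so the $p$-weights indexed by the letters of $\j'$ match term by term on both sides. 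The main technical obstacle is the corner case $\ell+|\j'|>k+|\i'|$, in which the $J$-cylinder on the left exceeds the $I$-cylinder in length; here the disintegration integrand is no longer constant on $[\i|_k\i']$ and produces additional column-marginal factors $\sum_i p_{ij'_r}$ for indices $r\ge k-\ell+|\i'|$. The same factors appear on the right through $\mu([\w_0\i']\times[\j'])$ for the same range of $r$, so they cancel, but the index accounting is what must be checked carefully. Beyond this bookkeeping no ideas outside the Bernoulli hypothesis are required.
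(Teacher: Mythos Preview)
Your proof is correct and, for part (2), essentially identical to the paper's argument. For part (1) your approach also works, but it is more laborious than the paper's. The paper observes that since $|\i|_\ell|=|\j|_\ell|=\ell$, the Bernoulli property lets one peel off the common prefix in one step:
\[
\frac{\mu([\i|_k\i']\times[\j|_\ell\j'])}{\mu([\i|_k]\times[\j|_\ell])}
=\frac{\mu([\tilde\i\,\i']\times[\j'])}{\mu([\tilde\i]\times J^\infty)}
=\frac{1}{\pi_1\mu[\tilde\i]}\int_{[\tilde\i\,\i']}\mu_\v[\j']\,d\pi_1\mu(\v),
\]
where $\tilde\i=\sigma^\ell(\i)|_{k-\ell}$; the right-hand side is then \emph{exactly} the definition of $\Psi_{k-\ell}(\sigma^\ell(\i))([\i']\times[\j'])$, for arbitrary $\i',\j'$. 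Because the paper lands on the definition of $\Psi_{k-\ell}$ without expanding it, no case split on whether $|\j'|$ exceeds $k-\ell+|\i'|$ is needed. By contrast, you expand both $\mu^{C_t^k(\i,\j)}$ and $\Psi_{k-\ell}$ fully into $p$- and $q$-weights and then match terms; this is what forces the ``corner case'' bookkeeping you describe. The matching you sketch does go through (the extra column-marginal factors $\sum_i p_{ij'_r}$ indeed appear symmetrically on both sides), so nothing is wrong---it is just that the paper's route sidesteps this entirely by stopping at the disintegration integral rather than evaluating it.
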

\begin{proof}
\begin{itemize}
\item[(1)] Write $\ell = \ell_t(k)$. Fix infinite words $\i \in I^\infty$ and $\j \in J^\infty$. Since $k > \ell$ and $\mu$ is a Bernoulli measure, we have
\begin{align*}\mu^{C_t^k(\i,\j)}([\i'] \times [\j']) &= \frac{\mu([\i|_k \i'] \times [\j|_{\ell}\j'])}{\mu([\i|_k] \times [\j|_{\ell}])} = \frac{\mu([\tilde \i \i'] \times [\j'])}{\mu([\tilde \i] \times J^\infty)}\\
& =\frac{1}{\pi_1\mu[\tilde \i]}  \int_{[\tilde \i \i']} \mu_\v[\j'] \, d\pi_1 \mu(\v) = \Psi_{k-\ell}(\sigma^{\ell}(\i))([\i'] \times [\j']) ,
\end{align*}
where $\tilde \i := \sigma^{\ell}(\i)|_{k-\ell} = i_{\ell} i_{\ell+1}\dots i_{k-1}.$
\item[(2)] Since $k \geq |\j'|$, any $\v \in [\i|_k \i']$ satisfies $\mu_\i[\j'] = \mu_\v[\j']$ by the definition of the conditional measure $\mu_\i$. Thus,
$$\int_{[\i|_k \i']} \mu_\v[\j'] \,d\pi_1 \mu(\v) = \mu_\i[\j'] \pi_1\mu[\i|_k \i'] = \Psi(\i)([\i'] \times [\j']) \pi_1 \mu[\i|_k],$$
proving the claim. As the choice of cylinders $[\i'], [\j']$ was arbitrary the claim of weak convergence follows.
\end{itemize}
\end{proof}

\subsection{Reduction to simple functions}
\label{subsection3}

To obtain Proposition \ref{thm:generatedirrational} we can in fact verify it just for characteristic functions over suitable box-like sets in $\T \times \Xi$. Before giving the exact argument of how to this, let us first introduce some notation to set things up.

\begin{notation}
Fix $h \in \N$ and write
\begin{equation*}\mathcal{P}_h = \Psi(I^\infty) \cup \bigcup_{k\geq h}\Psi_k(I^\infty),\end{equation*} that is, the set of minimeasures of generation at least $h$ together with all the micromeasures of $\mu$. This space is a closed subset of $\mathcal{P}(\Sigma)$ and thus compact.    
Given $\v \in I^\infty$ write
$$B(\v,h) := \{\w\in I^\infty\,:\, \Psi_h(\w) = \Psi_h(\v)\}.$$
Now the sets of interest for us are the push-forwards of $B(\v,h)$ in $\cP(\Sigma)$ defined by
$$\cB(\v,h) := \Psi(B(\v,h)) \cup \bigcup_{k \geq h} \Psi_k(B(\v,h)).$$
\end{notation}

The following lemma shows that the push-forwards $\cB(\v,h)$ are in fact open sets in $\Psi(I^\infty)$ with diameter $m^{-h}$, where $m$ is the size of the slowly expanding direction $I$.

\begin{lemma} \label{openball} There exists $h_0 \in \N$ such that if $h \geq h_0$ and $\v \in I^\infty$, then
$$\cB(\v,h) = B_d(\Psi(\v),m^{-h}) \cap \cP_h,$$
where $B_d$ indicates that this is an open metric ball with respect to $d$, which we recall is the Prokhorov metric.
\end{lemma}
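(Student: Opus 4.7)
\emph{Plan.} The plan is to prove both directions of the equality $\cB(\v,h)=B_d(\Psi(\v),m^{-h})\cap\cP_h$, using Lemma \ref{miniandmicro}(2) as the essential structural input and a Prokhorov-separation estimate between distinct values of the map $\Psi_h$.

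For the easy inclusion $\cB(\v,h)\subseteq B_d(\Psi(\v),m^{-h})\cap\cP_h$, fix $\w\in B(\v,h)$ and $k\geq h$. Since $\Psi_h(\w)=\Psi_h(\v)$ by definition, Lemma \ref{miniandmicro}(2) implies that all four measures $\Psi(\v)$, $\Psi_h(\v)$, $\Psi(\w)$ and $\Psi_k(\w)$ coincide on every cylinder $[\i']\times[\j']$ with $|\j'|\leq h$. I would then partition $\Sigma$ into the cylinders $[\i']\times[\j']$ with $|\i'|=\lceil h/\alpha\rceil$ and $|\j'|=h$; since $m^{-h/\alpha}=n^{-h}$, each such cylinder has diameter exactly $n^{-h}$ in the max-metric on $\Sigma$. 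The standard coupling bound for the Prokhorov distance (two Borel probabilities agreeing on a partition with cells of diameter $\leq\varepsilon$ lie within Prokhorov distance $\varepsilon$) then yields $d(\Psi_k(\w),\Psi(\v))\leq n^{-h}$ and $d(\Psi(\w),\Psi(\v))\leq n^{-h}$; because $n>m$, both are strictly smaller than $m^{-h}$, which gives the easy inclusion.

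For the reverse inclusion, take $\nu\in B_d(\Psi(\v),m^{-h})\cap\cP_h$. The definition of $\cP_h$ provides $\w'\in I^\infty$ and $k\geq h$ with either $\nu=\Psi(\w')$ or $\nu=\Psi_k(\w')$, and it suffices to show $\Psi_h(\w')=\Psi_h(\v)$, which places $\w'\in B(\v,h)$ and hence $\nu\in\cB(\v,h)$. Running the partition argument above with $\w'$ in place of $\w$ gives $d(\nu,\Psi_h(\w'))\leq n^{-h}$, and independently $d(\Psi(\v),\Psi_h(\v))\leq n^{-h}$. Combining these with $d(\nu,\Psi(\v))<m^{-h}$ via the triangle inequality yields
$$d(\Psi_h(\w'),\Psi_h(\v))<m^{-h}+2n^{-h}.$$

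The main obstacle is to promote this to a contradiction whenever $\Psi_h(\w')\neq\Psi_h(\v)$. By Lemma \ref{miniandmicro}(2), $\Psi_h(\u)$ coincides with $\pi_1\mu\times\mu_\u$ on cylinders with $|\j'|\leq h$, and is therefore determined by the Bernoulli product $\mu_\u|_{J^h}$ with weight sequence $(p_{u_0},\dots,p_{u_{h-1}})$; distinct values of $\Psi_h(\u)$ correspond to distinct such product measures. Letting $k$ be the first index at which the weight distributions $p_{w'_k}$ and $p_{v_k}$ differ, projection of the product measures to coordinate $k$ yields a total variation separation bounded below by a constant $\delta_0>0$ depending only on the $p_{ij}$, realised on a clopen cylinder $I^\infty\times[\j^*]$ of length $k+1$ in $\Sigma$. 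Using that this cylinder is separated from its complement by distance $n^{-k}$, and that the coordinate projection $\pi_2:\Sigma\to J^\infty$ is $1$-Lipschitz and so does not increase the Prokhorov distance, one gets $d(\Psi_h(\w'),\Psi_h(\v))\geq\min(\delta_0,n^{-k})$; refining by summing the contributions of all disagreeing positions in the product structure, and then choosing $h_0$ large enough that the resulting lower bound exceeds $m^{-h}+2n^{-h}$ for all $h\geq h_0$, delivers the required contradiction. This calibration of $h_0$ against the weights $(p_{ij})$ is exactly what the hypothesis ``$h\geq h_0$'' in the lemma absorbs, and it is the delicate step of the argument.
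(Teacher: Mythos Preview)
Your strategy matches the paper's: establish both inclusions, using agreement on level-$h$ cylinders for the forward direction and a clopen witnessing set for the reverse. The forward inclusion is carried out correctly and is essentially the paper's argument (the paper phrases it via $\epsilon$-neighbourhoods of Borel sets rather than a coupling/partition bound, but the content is the same, and your bound $n^{-h}<m^{-h}$ is even a little sharper than needed).

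The reverse inclusion, however, has a genuine gap --- and it is the same gap that the paper's own argument glosses over. You obtain $d(\Psi_h(\w'),\Psi_h(\v))\geq\min(\delta_0,n^{-k})$ with $k<h$ the first index at which $p_{w'_k}\neq p_{v_k}$. Nothing prevents $k$ from being $h-1$; then the lower bound is $n^{-(h-1)}$, and for every $h>1/(1-\alpha)$ one has $n^{-(h-1)}<m^{-h}$, so the inequality $d(\Psi_h(\w'),\Psi_h(\v))<m^{-h}+2n^{-h}$ is not contradicted. Your ``refining by summing the contributions of all disagreeing positions'' cannot rescue this, because there may be exactly one disagreeing position. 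Concretely, take $\v=0^\infty$ and $\w'=0^{h-1}1\,0^\infty$ with $p_0\neq p_1$. Then $\Psi_h(\w')\neq\Psi_h(\v)$, yet $\Psi(\w')$ and $\Psi(\v)$ agree on every cylinder $[\i']\times[\j']$ with $|\j'|\leq h-1$, which (by your own partition argument) forces $d(\Psi(\w'),\Psi(\v))\leq n^{-(h-1)}<m^{-h}$ once $h>1/(1-\alpha)$. Hence $\Psi(\w')\in B_d(\Psi(\v),m^{-h})\cap\cP_h$ while $\Psi(\w')\notin\cB(\v,h)$, so the reverse inclusion actually fails and no calibration of $h_0$ can repair it. The paper's proof slips at the analogous point when it asserts that the $\epsilon$-neighbourhood of $I^\infty\times C$ equals itself for all $\epsilon<m^{-l}$; with the $J^\infty$-metric $n^{-|\cdot|}$ the correct threshold is the \emph{smaller} number $n^{-l}$, which need not dominate $m^{-h}$ when $l$ is close to $h$.
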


\begin{proof}
Let $h_0 \in \N$ be any integer satisfying
$$m^{-h_0} < \min\{|p_{i}(j) - p_{i'}(j)| : i,i' \in I, j \in J, |p_{i}(j) - p_{i'}(j)| > 0\}.$$
If the set we are taking minimum over is empty, then we just set $h_0 = 0$. 

Pick $\nu \in \cB(\v,h)$. Then $\nu$ is either $\Psi(\w)$ or $\Psi_k(\w)$ for some $k \geq h$ and $\w \in I^\infty$ with $\Psi_h(\w) = \Psi_h(\v)$. By Lemma \ref{miniandmicro} the measures $\Psi_k(\w)$ and $\Psi(\w)$ agree on all cylinders of generation at most $k$. Moreover, since $k \geq h$ and $\Psi_h(\w) = \Psi_h(\v)$ by the choice of $\w$, we have that $\nu$ agrees with $\Psi(\v)$ on all cylinders of generation at most $h$. Now take a Borel-set in $A \subset \Sigma$ and $\epsilon \in (m^{-h-1},m^{-h})$. Then as we used the maximum metric obtained from $I^\infty$ and $J^\infty$, and $m < n$, the $\epsilon$-neighborhood $A_\epsilon$ is by definition a union of $h$ generation cylinder sets $C \subset \Sigma$. Thus
$$\nu(A_\epsilon) = \Psi(\v)(A_\epsilon),$$
which, when inputted to the definition of Prokhorov distance, gives 
$$d(\nu,\Psi(\v)) \leq \epsilon < m^{-h}.$$

Now assume $\nu \in \cP_h$ and $d(\nu,\Psi(\v)) < m^{-h}$. Since $\nu \in \cP_h$, it is either $\Psi(\w)$ or $\Psi_k(\w)$ for some $k \geq h$ and $\w \in I^\infty$. We only need to show that $\Psi_h(\w) = \Psi_h(\v)$. Suppose this is not the case, which means that there is one $l < h$ and $j \in \N$ with $p_{w_l}(j) \neq p_{v_l}(j)$. Write $C = \bigcup_{\j' \in J^{l-1}} [\j' j]$, so $C \subset J^\infty$ is a finite union of cylinders. Since the $\epsilon$-neighborhood of $X \times C$ is $X \times C$ for any $\epsilon < m^{-l}$ and $l < h$, we have 
$$ |\nu(X\times C) - \Psi(\v)(X\times C)| \leq d(\nu,\Psi(\v)) < m^{-h}.$$
Since $k \geq h$ and $\nu$ and $\Psi_h(\w)$ agree on all cylinders of length at most $k$, we obtain
\begin{align*}|p_{w_l}(j)-p_{v_{l}}(j)| = \Big|\sum_{\j' \in J^{l-1}} \mu_\w[\j' j] - \sum_{\j' \in J^{l-1}} \mu_{\v}[\j' j]\Big| = |\nu(X\times C) - \Psi(\v)(X\times C)| < m^{-h},\end{align*}
which contradicts with the choice of $h_0$. Thus $\nu \in \cB(\v,h)$.
\end{proof}

Now assume we have verified the following:

\begin{lemma}\label{simplefunctions}
Fix an interval $[a,b) \subset \T$, finite words $\i', \j'$ in $I^*$ and $J^*$ respectively, a point $\v \in I^\infty$ and $h \geq h_0$ where $h_0$ is defined by Lemma \ref{openball}. Write
\begin{align}\label{simple}g := \chi_{[a,b) \times [\i'] \times [\j'] \times \cB(\v,h)}.\end{align}
If $\alpha$ is irrational, then at $\mu$ almost every $(\i,\j) \in \Sigma$, we have for all $q \in \N$ and $t \in \T$ that
$$\frac{1}{N} \sum_{k = 0}^{N-1} g(Z^{qk}(t,\i,\j,\mu)) \to \int g \, d(\lambda \times P).$$
If $\alpha$ is rational, we can replace $\lambda \times P$ by $\tau_{t,q} \times P$ above.
\end{lemma}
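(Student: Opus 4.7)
The plan is to rewrite $g(Z^{qk}(t,\i,\j,\mu))$ as a product of four elementary indicators on $(t,\i,\j)$, and to close the argument by combining Weyl equidistribution on the rotational factor with a law-of-large-numbers estimate on the Bernoulli pair sequence. By Lemma \ref{miniandmicro}(1), whenever $qk-\ell_t(qk)\geq h$ (which holds for all sufficiently large $k$ since $\alpha<1$ forces $qk-\ell_t(qk)\to\infty$), the measure coordinate of $Z^{qk}(t,\i,\j,\mu)$ is $\Psi_{qk-\ell_t(qk)}(\sigma^{\ell_t(qk)}\i)$; by Lemma \ref{openball} this measure lies in $\cB(\v,h)$ if and only if $\sigma^{\ell_t(qk)}\i\in B(\v,h)$. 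Since $\Psi_h(\w)$ depends on $\w$ only through its first $h$ letters, $B(\v,h)$ is a finite union of length-$h$ cylinders in $I^\infty$. Hence, for all large $k$,
$$g(Z^{qk}(t,\i,\j,\mu))=\chi_{[a,b)}(\phi^{qk}t)\cdot\chi_{[\i']}(\sigma^{qk}\i)\cdot\chi_{[\j']}(\sigma^{\ell_t(qk)}\j)\cdot\chi_{B(\v,h)}(\sigma^{\ell_t(qk)}\i).$$

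Since $m<n$, $qk-\ell_t(qk)\to\infty$, so for all large $k$ the $\i$-windows $[qk,qk+|\i'|-1]$ and $[\ell_t(qk),\ell_t(qk)+\max(h,|\j'|)-1]$ are disjoint. The Bernoulli property makes the pairs $(i_l,j_l)_{l\in\N}$ i.i.d.\ with joint law $(p_{ij})$ under $\mu$, so the three symbolic indicators above become independent, yielding
$$\mathbb{E}_\mu\!\left[\chi_{[\i']}(\sigma^{qk}\i)\cdot\chi_{[\j']}(\sigma^{\ell_t(qk)}\j)\cdot\chi_{B(\v,h)}(\sigma^{\ell_t(qk)}\i)\right]=\pi_1\mu([\i'])\cdot\mu(B(\v,h)\times[\j'])=:C.$$
One verifies directly from the definition of $P$ and Lemma \ref{openball} that $\int g\,d(\lambda\times P)=\lambda([a,b))\cdot C$ (and analogously $\int g\,d(\tau_{t,q}\times P)=\tau_{t,q}([a,b))\cdot C$ in the rational case).

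To pass from expectation to the Cesaro average itself, the rotational factor is handled by Weyl's equidistribution theorem: since $\alpha$ irrational implies $q\alpha$ irrational, $\tfrac{1}{N}\sum_{k<N}\chi_{[a,b)}(\phi^{qk}t)\to\lambda([a,b))$ uniformly in $t\in\T$; in the rational case the same Cesaro average converges to $\tau_{t,q}([a,b))$ along the periodic orbit. For the symbolic factor, writing $F_k$ for the product of the three symbolic indicators, the pair-independence of $\mu$ gives $\mathrm{Cov}_\mu(F_k,F_{k'})=0$ as soon as the four $\i$- and $\j$-windows involved in $F_k$ and $F_{k'}$ are pairwise disjoint, which excludes only $O(1)$ values of $k'$ for each $k$. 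Consequently $\mathrm{Var}_\mu\!\left(\tfrac{1}{N}\sum_{k<N}F_k\right)=O(1/N)$; Chebyshev together with a Borel--Cantelli argument along the subsequence $N_j=j^2$, with monotonicity-type interpolation between consecutive $N_j$, yields $\mu$-almost-sure convergence of the symbolic Cesaro average to $C$.

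The main remaining obstacle is upgrading $\mu$-a.s.\ convergence at each fixed $t$ to $\mu$-a.s.\ convergence valid simultaneously at every $t\in\T$. The $t$-dependence of $F_k$ enters only through the integer-valued quantity $\ell_t(qk)$, which is locally constant in $t$, so the admissible sequences $(\ell_t(qk))_k$ realised as $t$ varies can be enumerated via a countable dense set of parameters; taking the union of the corresponding exceptional null sets and combining with the uniform-in-$t$ convergence of the rotational averages delivers the statement ``for every $t$''. The proof thus reduces to three ingredients: the symbolic factorisation via Lemmas \ref{miniandmicro} and \ref{openball}, a variance bound on the Bernoulli side, and Weyl equidistribution on the rotation.
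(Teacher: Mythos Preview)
Your symbolic factorisation via Lemmas \ref{miniandmicro} and \ref{openball} and the covariance count (for each $k$ only $O(1)$ indices $k'$ give nonzero covariance) are exactly what the paper uses, and your variance/Borel--Cantelli route along $N_j=j^2$ is a perfectly good substitute for the paper's appeal to Lyons' SLLN. However, there is a real gap in how you combine the rotational and symbolic factors. You establish separately that
\[
\frac{1}{N}\sum_{k<N}\chi_{[a,b)}(\phi^{qk}t)\to\lambda([a,b)) \qquad\text{and}\qquad \frac{1}{N}\sum_{k<N}F_k\to C \text{ a.s.},
\]
but the object to control is $\tfrac{1}{N}\sum_{k<N}\chi_{[a,b)}(\phi^{qk}t)\,F_k$, and the Ces\`aro average of a product is not the product of the Ces\`aro averages (take $F_k=\chi_{[a,b)}(\phi^{qk}t)$ for a counterexample). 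The paper handles this by first passing to the subsequence $k_1<k_2<\dots$ of hitting times of $[a,b)$ under the rotation, applying the weakly-correlated law of large numbers to $X_i:=F_{k_i}$ along that subsequence, and only then multiplying by the density $\tfrac{1}{N}\#\{i:k_i<N\}\to\lambda([a,b))$. Your variance bound and monotone interpolation work equally well along the hitting-time subsequence, so the repair is simply to restrict first; alternatively, run the variance argument on the full product $\chi_{[a,b)}(\phi^{qk}t)F_k$ directly --- the rotational indicator is deterministic in $(\i,\j)$, so the covariance count is unchanged, the partial sums remain monotone, and the $k$-dependent means average to $C\lambda([a,b))$ by Weyl.

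Your upgrade to ``for all $t$ simultaneously'' also fails as written: for irrational $\alpha$ the map $t\mapsto(\ell_t(qk))_{k\in\N}$ is injective on $[0,1)$, since the fractional parts of $q\alpha k$ are dense and hence any $t\neq t'$ eventually produce different integer parts $\lfloor q\alpha k+t\rfloor$. The family of sequences is therefore uncountable and the countable-union-of-null-sets argument does not apply. The paper does not address this point either; its proof simply fixes $t$ at the outset, which matches the quantifier order actually used in Proposition \ref{thm:generatedirrational} and in Section \ref{symbgeom} (where only $t=0$ is needed).
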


Suppose we have a continuous $f  : \T \times \Xi \to \R$. Since the orbit $Z^k(t,\i,\j,\mu)$ and the measure component $\tilde Q$ of the distribution under study lives on $\Psi(I^\infty)$, we only need to consider the values of $f$ in $\cP_h$ for $k \geq h$. The aim is to uniformly approximate $f$ using functions $\psi$ obtained from functions of the form \eqref{simple}. In other words, given $\epsilon > 0$ we have to construct such $\psi : \T \times \Xi \to \R$ such that when taking the sup-norm
$$\|f|_{\cP_h}-\psi\|_\infty < \epsilon.$$
If one then inserts this into the ergodic averages considered above, we obtain the statement of Proposition \ref{thm:generatedirrational} also for $f$:
$$\frac{1}{N} \sum_{k = 0}^{N-1} f(Z^{qk}(t,\i,\j,\mu)) \to \int f \, d(\lambda \times P)$$
if $\alpha$ is irrational, and if $\alpha$ is rational, then replacing $\lambda \times P$ by $\tau_{t,q} \times P$.  Thanks to Lemma \ref{openball}, we now have all the ingredients required to construct such $\psi$, and the method we use is more or less standard.  Since $\T \times \Xi$ is a product of compact metric spaces, all continuous functions on $\T \times \Xi$ are uniformly continuous, and we can reduce the problem to studying continuous functions on each component of $\T \times \Xi$. In the $\T$ and $\Sigma$ components, it is standard that finite linear combinations of maps of the form $\chi_{[a,b)}$ and $\chi_{[\i'] \times [\j']}$ can be used to obtain such uniform approximation. 

In the measure component, since by Lemma \ref{openball} the sets $\mathcal{B}(\v,h)$ are open, we can use the compactness of $\cP_h$ to find a cover by finitely many balls $\cB(\v,h)$. Moreover, by Lemma \ref{openball} their diameters are always less than $m^{-h}$, so if we choose $h$ large enough the values of a uniformly continuous function $f_0$ on $\cP(\Sigma)$ on each of these balls are $\epsilon$-far from each other. To create suitable simple functions from $\chi_{\cB(\v,h)}$ one can take refinements from $\cB(\v,h)$ to obtain disjoint elements and put a suitable value of $f_0$ on each of these building blocks. 

\subsection{Proof for the simple functions}
\label{subsection4}
We now prove Lemma \ref{simplefunctions}. Fix $t \in \T$. Let $k_1 < k_2 < \dots$ be the sequence of values satisfying
$$\phi^{k_i}(t) \in [a,b).$$
Notice that if $\alpha$ is rational, then it may be that the orbit of $t$ never hits $[a,b)$, in which case such a choice is not possible. However, in this case the proof is done, since then the $\tau_t$ measure of $[a,b)$ is also zero, which gives the proof for simple functions. Thus we may assume, in both cases, that such a sequence $k_i$ can be chosen.

Writing $B = B(\v,h)$, $\cB = \cB(\v,h)$ and $\ell_i = \ell_t(k_i)$ we have by the definition of $g$ and $Z$, and Lemma \ref{openball} that
\begin{eqnarray*} X_i(\i,\j) := g(Z^{k_i}(t,\i ,\j,\mu )) & =&  \chi_{[\i']}(\sigma^{k_i}(\i))\chi_{[\j']}(\sigma^{\ell_i}(\j))\chi_{\cB}(\Psi_{k_i-\ell_i}(\sigma^{\ell_i}(\i))) \\ 
& = &  \chi_{[\i']}(\sigma^{k_i}(\i))\chi_{[\j']}(\sigma^{\ell_i}(\j))\chi_{B}(\sigma^{\ell_i}(\i)).
\end{eqnarray*} 
Consider $X_i$ as a stochastic process in the probability space $(\Sigma,\mu)$. Notice that if $k_i-\ell_i\geq h$ the events $\sigma^{-(k_i-\ell_i)}[i'] \times Y$ and $B \times [j']$ are $\mu$ independent since having the property $\Psi_h(\i) = \Psi_h(\v)$ does not depend on the letters of the word $\i$ after $h$. Thus the process $X_i$ has a common distribution and by $\sigma \times \sigma$ invariance of $\mu$ we obtain
\begin{align*}\E(X_i) &= \mu((\sigma^{-(k_i-\ell_i)}[\i'] \cap B) \times [\j']) = \mu([\i'] \times Y) \mu(B \times [\j']) 
\\ &= \int_B  \pi_1 \mu[\i']\mu_\i[\j'] \, d\pi_1 \mu(\i)= \int g \, dQ =: \rho.
\end{align*}
We define a centred process $\tilde{X}_i = X_i - \rho$.

In order to understand the $\mu$ almost sure behavior of Ces\`aro averages of the sequence of random variables $\tilde{X}_i$ we make use of Lyons' strong law of large numbers for weakly correlated random variables \cite{Lyo88}.

\begin{prop}[Theorem 1 of \cite{Lyo88}]\label{lyons} Let $\tilde{X}_i$ $(i = 1, 2, \dots)$ be random variables with mean zero, $|\tilde{X}_i| \leq 1$ almost surely and
$$\sum_{N = 1}^{\infty}\frac{1}{N} \Big\| \frac{1}{N} \sum_{ i = 1}^{N}\tilde{X}_i \Big\|_2^2 < \infty.$$
Then $\frac{1}{N} \sum_{i = 1}^{N}\tilde{X}_i \to 0$ as $N \to \infty$ almost surely.
\end{prop}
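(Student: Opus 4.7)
Setting $S_N = \sum_{i=1}^N \tilde X_i$ and $Y_N = S_N/N$, the goal is to show $Y_N \to 0$ almost surely. The plan is to proceed in two steps: first pass from the hypothesised $L^2$-summability to a pathwise summability statement, then upgrade this to pointwise convergence by exploiting the almost-sure Lipschitz-type control on $(Y_N)$ that comes from the bound $|\tilde X_i| \leq 1$.

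The first step is just Tonelli. The hypothesis $\sum_N \E[Y_N^2]/N < \infty$ gives
\[
\sum_{N=1}^{\infty} \frac{Y_N^2}{N} < \infty \quad \text{almost surely.}
\]
This is not by itself enough to force $Y_N \to 0$, since a summable series of nonnegative reals with weight $1/N$ is perfectly compatible with $Y_N^2$ spiking along a sparse subsequence. The additional input is the elementary pointwise estimate $|S_M - S_N| \leq M - N$ for $M \geq N$, which rearranges to
\[
Y_M \geq \frac{N}{M}\, Y_N - \frac{M-N}{M},
\]
expressing that $Y_N$ can only change slowly on multiplicative intervals.

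For the second step, fix $\delta > 0$ and choose $\eta = \eta(\delta) > 0$ small (any $\eta \leq \delta/(1+\delta)$ works) so that the above inequality forces: if $Y_N > 2\delta$ then $Y_M > \delta$ for all $M \in [N,(1+\eta)N]$. On the event $E_\delta = \{\limsup_N Y_N > 2\delta\}$, the random set $\{N : Y_N > 2\delta\}$ is infinite, so greedy extraction produces a sample-dependent subsequence $N_1 < N_2 < \cdots$ with $N_{k+1} > (1+\eta)N_k$ and $Y_{N_k} > 2\delta$. The disjoint intervals $[N_k,(1+\eta)N_k]$ each contribute at least $\delta^2 \log(1+\eta)$ to $\sum_M Y_M^2/M$, so that series diverges on $E_\delta$; combined with Step 1 this forces $\P(E_\delta) = 0$. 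Applying the same argument to $-\tilde X_i$ kills the symmetric liminf event, and letting $\delta \downarrow 0$ along rationals yields $Y_N \to 0$ almost surely.

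The creative content is concentrated entirely in the disjoint-intervals lower bound of Step 2; once the Lipschitz-type inequality $Y_M \geq (N/M)Y_N - (M-N)/M$ is written down, the rest is essentially forced. The one technical point worth noting is the measurability of the sample-dependent subsequence $N_k(\omega)$, which is immediate from the least-element definition applied pathwise. I expect the main obstacle in drafting to be the choice of $\eta$ in terms of $\delta$ and the explicit verification that the disjoint intervals stay disjoint; both are elementary but need to be done carefully to keep the constant $\delta^2 \log(1+\eta)$ strictly positive.
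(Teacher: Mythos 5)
The paper does not prove this statement; it is imported verbatim as Theorem~1 of Lyons \cite{Lyo88}, so there is no internal proof to compare against. Your argument is a correct, self-contained proof, and it follows essentially the standard strategy behind Lyons' result: use Tonelli to pass from $L^2$-summability to the pathwise statement $\sum_N Y_N^2/N < \infty$, and then exploit the deterministic Lipschitz-type bound $|S_M - S_N| \le M-N$ (i.e.\ $Y_M \ge (N/M)Y_N - (M-N)/M$) to show that a recurrent excursion $Y_N > 2\delta$ forces $Y_M > \delta$ on a multiplicative interval, whose $\sum 1/M$ contribution is $\ge \log(1+\eta)$; infinitely many disjoint such intervals contradict summability. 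All the numerical checks go through: the choice $\eta < \delta/(1+\delta)$ gives the needed persistence estimate, the greedy subsequence keeps the intervals disjoint, and the integral comparison $\sum_{M=a}^{b}1/M \ge \log((b+1)/a)$ yields the uniform lower bound per interval. One small remark: the measurability worry you flag is vacuous --- the whole of Step~2 is a deterministic sequence lemma applied pathwise on the a.s.\ event $\{\sum_N Y_N^2/N < \infty\}$, and the target event $\{Y_N \to 0\}$ is plainly measurable, so the $\omega$-dependent subsequence never needs to be measured.
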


\begin{lemma}\label{satisfylyons}The variables $\tilde{X}_i$ satisfy the hypotheses of Proposition \ref{lyons}.  \label{estimate}\end{lemma}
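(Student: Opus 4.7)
The plan is to verify the three hypotheses of Proposition~\ref{lyons}. Mean zero and the bound $|\tilde X_i|\le 1$ are immediate: $\tilde X_i = X_i - \rho$ by definition, and $X_i$ is a product of indicator functions, so it takes values in $\{0,1\}$. All the content of the lemma therefore lies in establishing the summability condition. Expanding the square and using $\E(\tilde X_i)=0$ reduces this to the covariance sum
$$\Bigl\|\frac{1}{N}\sum_{i=1}^{N}\tilde X_i\Bigr\|_2^2 = \frac{1}{N^2}\sum_{i,j=1}^{N}\mathrm{Cov}(X_i,X_j),$$
and my goal is to show that the right-hand side is $O(1/N)$, which trivially makes $\sum_N N^{-1}\cdot O(N^{-1})$ converge.

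The key observation is that $X_i(\i,\j)$ depends on the Bernoulli pair-coordinates $(i_k,j_k)$ only through indices lying in a bounded-size window around $\ell_i$ together with a bounded-size window around $k_i$. Writing $M := \max(h,|\i'|,|\j'|)$ and
$$A_i := \{\ell_i,\dots,\ell_i+M-1\}\cup\{k_i,\dots,k_i+M-1\},$$
the factor $\chi_B(\sigma^{\ell_i}(\i))$ depends on $i_{\ell_i},\dots,i_{\ell_i+h-1}$ (this is precisely the characterisation of $B$ in Lemma~\ref{openball}), the factor $\chi_{[\j']}(\sigma^{\ell_i}(\j))$ depends on $j_{\ell_i},\dots,j_{\ell_i+|\j'|-1}$, and the factor $\chi_{[\i']}(\sigma^{k_i}(\i))$ depends on $i_{k_i},\dots,i_{k_i+|\i'|-1}$. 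Since $\mu$ is a product Bernoulli measure on the pairs $(i_k,j_k)$, whenever $A_i\cap A_j=\emptyset$ the variables $X_i,X_j$ are independent and hence $\mathrm{Cov}(X_i,X_j)=0$.

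It remains to count the pairs with $A_i\cap A_j\ne\emptyset$. Both $(k_i)$ and $(\ell_i)$ grow at least linearly in $i$: in the irrational case the three-distance theorem gives uniformly bounded consecutive gaps $k_{i+1}-k_i$, and in the rational case these gaps are periodic, so in either situation there exist constants $c,c'>0$ with $k_j-k_i\ge c(j-i)$ and $\ell_j-\ell_i\ge c'(j-i)$. Consequently, for each fixed $i$ the number of $j$'s for which $A_i$ and $A_j$ overlap is bounded by a constant depending only on $M, c, c'$, and so the total number of dependent pairs among indices $\le N$ is $O(N)$. Using $|\mathrm{Cov}(X_i,X_j)|\le 1$ for the dependent pairs and $\mathrm{Cov}(X_i,X_j)=0$ for the rest yields $\|N^{-1}\sum_{i=1}^N \tilde X_i\|_2^2 = O(1/N)$, as required.

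The only delicate point is the uniform linear growth of the hitting times $(k_i)$ (and thus $(\ell_i)$), which is the reason the irrational and rational cases are handled separately; once this is supplied by the three-distance theorem and by periodicity respectively, the rest is a routine second-moment calculation in the spirit of weakly correlated stationary sequences.
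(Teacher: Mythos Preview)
Your approach is essentially the same as the paper's: expand the $L^2$ norm as a covariance sum, observe that $X_i$ depends only on the pair-coordinates in a window of bounded size around $\ell_i$ and around $k_i$, and then count the number of $j$ for which these windows can overlap those of $X_i$.

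One remark on execution. You invoke the three-distance theorem to obtain linear growth of $(k_i)$ and $(\ell_i)$, but this is both unnecessary and slightly mis-stated. The $k_i$ are strictly increasing integers, so $k_j-k_i\ge j-i$ is trivial; and $\ell_i=\lfloor \alpha k_i+t\rfloor$ gives only $\ell_j-\ell_i\ge \alpha(j-i)-1$, which is not quite $\ge c'(j-i)$ for small $j-i$. The paper avoids this altogether: it simply observes that the $k_i$ are strictly increasing and that the $\ell_i$ are non-decreasing with any value repeated at most $\lceil 1/\alpha\rceil$ times, so each of the four overlap conditions $|\ell_i-\ell_j|\le C$, $|k_i-\ell_j|\le C$, $|\ell_i-k_j|\le C$, $|k_i-k_j|\le C$ is satisfied by at most $(2C+1)/\alpha$ indices $j$. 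This gives the uniform bound $N_i\le 4(2C+1)/\alpha$ directly, with no appeal to gap theorems and no separate treatment of the rational case.
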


\begin{proof} We first observe that 
\begin{align}\label{decay} \Big\| \frac{1}{N} \sum_{ i = 1}^{N}\tilde{X}_i \Big\|_2^2 = \frac{1}{N^2} \sum_{1\leq i,j \leq N} \E[\tilde{X}_i \tilde{X}_j] \leq  \frac{1}{N^2}  \sum_{i=1}^N \#\{1 \leq j \leq N \,:\,\E[\tilde{X}_i \tilde{X}_j] \neq 0\}. \end{align}
Thus we need to estimate the number
$$N_{i} =  \#\{1 \leq j \leq N \,:\,\E[\tilde{X}_i \tilde{X}_j] \neq 0\}.$$
For fixed $1\leq i \leq N$ we observe that the value of the function $\tilde{X}_i$ depends only on coordinates 
$$\ell_i,\ell_i+1,\dots,\ell_i+h-1 \quad \text{and} \quad k_i,k_i+1,\dots,k_i+|\i^\prime|-1$$ 
of $\i$ and on coordinates 
$$\ell_i,\ell_i+1,\dots,\ell_i+|\j^\prime|-1$$ 
of $\j$.   If $\E[\tilde{X}_i \tilde{X}_j] \neq 0$, then $X_i$ and $X_j$ must not be independent, which can only happen if one of the following four situations occurs:
\begin{itemize}
\item[(1)] $|\ell_i - \ell_j| \leq \max\{h,|\j^\prime|\} $
\item[(2)] $|k_i - \ell_j| \leq \max\{h, |\i^\prime|,|\j^\prime|\}$
\item[(3)] $|\ell_i - k_j| \leq \max\{h, |\i^\prime|,|\j^\prime|\}$
\item[(4)] $|k_i - k_j| \leq |\i^\prime|.$
\end{itemize}
Thus if we write $C := \max\{h, |\i^\prime|,|\j^\prime|\}$, we have a bound
\begin{align*}
N_i & \leq  \#\{1 \leq j \leq N \,:\,|\ell_i - \ell_j| \leq  C \}  +   \#\{1 \leq j \leq N \,:\,|k_i - \ell_j| \leq  C \} \\
& \quad +   \#\{1 \leq j \leq N \,:\,|\ell_i - k_j| \leq  C \}   +   \#\{1 \leq j \leq N \,:\,|k_i - k_j| \leq  C \}.
\end{align*}
Recall that the $k_i$ form a strictly increasing sequence of integers and observe that the $l_i$ form an increasing sequence of integers with at most $1/\alpha$ repetitions of any particular integer.  It follows that each of the four terms above can be bounded by $(2C+1)/\alpha$ which yields $N_i \leq 4(2C+1)/\alpha$. Hence
\[
\sum_{N = 1}^{\infty}\frac{1}{N} \Big\| \frac{1}{N} \sum_{ i = 1}^{N}\tilde{X}_i \Big\|_2^2     \leq   \sum_{N = 1}^{\infty} \frac{4(2C+1)/\alpha }{N^2}  <  \infty,
\]
which completes the proof.
\end{proof}

\begin{proof}[Proof of Lemma \ref{simplefunctions}]
Initially, set $q=1$.  First we observe that
\begin{equation}\label{eq:link}\frac{1}{N} \sum_{k=0}^{N-1} g(Z^k(t,\i,\j,\mu))  =  \frac{1}{N} \sum_{i \,:\, k_i<N} X_i .\end{equation}
Combining Proposition \ref{lyons} and Lemma \ref{estimate} yields
\begin{equation*}\lim_{N\to\infty} \frac{1}{ \#\{0 \leq k < N\,:\, \phi^k(t) \in [a,b)\} }  \sum_{i \,:\, k_i<N} X_i = \int g \, dQ.
\end{equation*}
By the equidistribution of $\{\phi^k(t)\}_{k \in \N}$ to Lebesgue measure for irrational rotations and to $\tau_{t,1}$ for rational rotations, the averages
$$ \frac{\#\{0 \leq k < N\,:\, \phi^k(t) \in [a,b)\}}{N}$$
converge to $\lambda[a,b)$ in the irrational case and to $\tau_{t,1}[a,b)$ in the rational case. Thus the ergodic averages
\begin{equation*}\lim_{N\to\infty} \frac{1}{N} \sum_{k=0}^{N-1} g(Z^k(t,\i,\j,\mu))\end{equation*}
converge to either $\lambda[a,b)\int g \, dP$ in the irrational case or to $\tau_{t,1}[a,b)\int g \, dP$ in the rational case.
This shows (\ref{eq:stronggen}) in the case $q=1$.  The case when $q>1$ is similar and we omit the details.  First observe that the measure $\mu$ is still a Bernoulli measure for $\sigma^q \times \sigma^q$.  In the irrational case the sequence $\{\phi^{qk}(t)\}_{k \in \mathbb{N}}$ still equidistributes to Lebesgue measure and so the proof is identical.  In the rational case when $\alpha = p'/q'$ with $\text{gcd}(p',q') = 1$, say, then $\{\phi^{qk}(t)\}_{k \in \mathbb{N}}$ equidistributes to $\tau_{t,q}$, which differs from $\tau_{t,1}$ if $\text{gcd}(q,q')>1$.
\end{proof}

\subsection{Invariance and ergodicity of the distributions}
\label{subsection5}

\begin{lemma}
The distributions $\lambda \times P$ and $\tau_{t,q} \times P$ are $Z$ invariant and $Z$ ergodic.
\end{lemma}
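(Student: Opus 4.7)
Both claims will be derived directly from Proposition~\ref{thm:generatedirrational}. For $Z$-invariance, I would argue that any weak limit of empirical averages along $Z$-orbits is $Z$-invariant, by the usual telescoping argument. For $Z$-ergodicity, I would leverage the fact that the sparse averages produced by Proposition~\ref{thm:generatedirrational} converge to the \emph{same} distribution from a full $\mu$-measure set of starting points $(\i,\j)$, and combine this with Lemma~\ref{miniandmicro}(2) to handle the mismatch between $\mu$-typical and $(\lambda\times P)$-typical starting points.

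\textbf{Invariance.} Fix $(t,\i,\j) \in \T \times \Sigma$ with $(\i,\j)$ satisfying the conclusion of Proposition~\ref{thm:generatedirrational}, and write $\nu_N := \tfrac{1}{N}\sum_{k=0}^{N-1} \delta_{Z^k(t,\i,\j,\mu)}$. By the proposition, $\nu_N \to \lambda \times P$ weakly (and analogously in the rational case with $\tau_{t,1} \times P$). A direct telescoping gives $\|Z_*\nu_N - \nu_N\|_{\mathrm{TV}} = \|\tfrac{1}{N}(\delta_{Z^N(t,\i,\j,\mu)} - \delta_{(t,\i,\j,\mu)})\|_{\mathrm{TV}} \leq 2/N$. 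Meanwhile, $Z$ is continuous off the locus $\{t = 1-\alpha\}$ together with the countable union of cylinder boundaries (these last being null for the measures appearing in $P$, as noted in the proof of Theorem~\ref{thm:main} since $\mu$ is not supported on any vertical or horizontal line). Since this discontinuity set has $(\lambda \times P)$-measure zero, the continuous mapping theorem (portmanteau) gives $Z_*\nu_N \to Z_*(\lambda\times P)$, while the TV estimate forces $Z_*\nu_N$ and $\nu_N$ to share the same weak limit. Hence $Z_*(\lambda \times P) = \lambda \times P$, and an identical argument handles $\tau_{t,q} \times P$.

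\textbf{Ergodicity.} Let $f$ be a bounded $Z$-invariant measurable function on $\T \times \Xi$. By Birkhoff applied to the invariant measure $\lambda \times P$, the Cesaro averages $\tfrac{1}{N}\sum_{k<N} f \circ Z^k$ converge $(\lambda \times P)$-almost everywhere to $\E[f \mid \calI]$, where $\calI$ denotes the $\sigma$-algebra of $Z$-invariant sets. To conclude $\E[f \mid \calI]$ is constant, I would exploit the strong form of Proposition~\ref{thm:generatedirrational}, which pins down these averages along \emph{every} sparse subsequence $\{Z^{qk}\}_{k}$ and from $\mu$-typical starting points, to the deterministic limit $\int f\, d(\lambda \times P)$. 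Transferring this to $(\lambda \times P)$-typical starting points uses Lemma~\ref{miniandmicro}(2): after sufficiently many iterations the measure coordinate $\Psi_{k-\ell_t(k)}(\sigma^{\ell_t(k)}(\i))$ of the orbit lies in a Prokhorov-neighborhood (of radius $m^{-h}$, by Lemma~\ref{openball}) of the $P$-typical form $\Psi(\sigma^{\ell_t(k)}(\i))$. Thus the long-term orbit statistics from $(t,\i,\j,\mu)$ and from $(t,\i,\j,\pi_1\mu \times \mu_\i)$ agree, and the Birkhoff averages of $f$ from a $(\lambda \times P)$-generic point also equal the constant $\int f\,d(\lambda \times P)$. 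This establishes $Z$-ergodicity. In the rational case, $\tau_{t,q}$ is supported on a finite orbit, the analogue of the above goes through verbatim, and one can equivalently verify ergodicity by direct analysis of the finitely many $\phi$-fibers combined with mixing of the Bernoulli shift on $(\Sigma,\mu)$.

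\textbf{Main obstacle.} The subtle step is the transfer argument in the ergodicity proof: Proposition~\ref{thm:generatedirrational} yields Cesaro convergence only from points of the form $(t,\i,\j,\mu)$, not from the generic points $(t,\i,\j,\pi_1\mu \times \mu_\i)$ with which $\lambda \times P$ is concerned. Bridging this gap requires the Prokhorov-convergence $\Psi_k(\i) \to \Psi(\i)$ of Lemma~\ref{miniandmicro}(2) together with uniform continuity of the relevant test functions on the compact set $\cP_h$, so that after a bounded number of initial iterations the orbit of $(t,\i,\j,\mu)$ is indistinguishable (in its contribution to Cesaro averages of continuous $f$) from the orbit of a nearby $(\lambda \times P)$-typical point.
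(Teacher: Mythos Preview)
Your approach is genuinely different from the paper's. The paper verifies invariance by a direct computation: it checks $(\lambda\times P)(Z^{-1}A)=(\lambda\times P)(A)$ on the generating family $A=[a,b]\times[\i']\times[\j']\times\Psi([\v])$, using $\sigma$-invariance of $\pi_1\mu$ together with the identity $\sigma\mu_\i=\mu_{\sigma(\i)}$. For ergodicity it applies Walters' criterion, showing that for $k$ large the correlations factor as $P(B_1\cap\tilde Z_{t,k}^{-1}B_2)=P(B_1)P(B_2)$ (an explicit mixing computation in the Bernoulli coordinates) and then averages over $t$ using $\phi$-ergodicity of $\lambda$. Your route instead recycles the already-proven convergence in Proposition~\ref{thm:generatedirrational}: invariance comes from telescoping the empirical measures and the continuous mapping theorem, and ergodicity comes from transferring the empirical-measure convergence from starting points $(t,\i,\j,\mu)$ to the $(\lambda\times P)$-typical points $(t,\i,\j,\Psi(\i))$ via the asymptotic-orbit observation that the respective measure coordinates $\Psi_{k-\ell_t(k)}(\sigma^{\ell_t(k)}\i)$ and $\Psi(\sigma^{\ell_t(k)}\i)$ are within Prokhorov distance $m^{-(k-\ell_t(k))}\to 0$. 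The paper's argument is self-contained and independent of the generation result; yours is softer and avoids any new computation, at the price of having to control the discontinuity set of $Z$.

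One point in your ergodicity argument needs correcting. You begin with ``let $f$ be a bounded $Z$-invariant measurable function'' and appeal to Birkhoff, but for such $f$ the Birkhoff averages are identically $f$, and Proposition~\ref{thm:generatedirrational} is a \emph{weak} convergence statement, so it only controls Ces\`aro averages of continuous (or a.e.\ continuous) observables, not of an arbitrary measurable invariant $f$. The correct packaging of your transfer idea is this: the asymptotic-orbit argument shows that for $(\lambda\times P)$-a.e.\ starting point $x$ the empirical measures $\tfrac{1}{N}\sum_{k<N}\delta_{Z^kx}$ converge weakly to $\lambda\times P$, and that is a standard characterisation of ergodicity of an invariant probability measure (for instance via the ergodic decomposition: generic points for distinct ergodic components would produce distinct empirical limits). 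With that reformulation your strategy goes through.
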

\begin{proof}
We prove this just for the irrational case.  The proof in the rational case is exactly the same as we do not use any properties of Lebesgue measure apart from $\phi$ invariance and $\phi$ ergodicity, which is true in the rational case as well for $\tau_{t,q}$.

Let us first prove the $Z$ invariance of $\lambda \times P$. Fix words $\i',\v \in I^k$ and $\j' \in J^l$ and an interval $[a,b] \subset \T$. Define the set
$$A := [a,b] \times [\i'] \times [\j'] \times \Psi([\v])$$
It is enough to show that
$$(\lambda \times P)(Z^{-1} A) = (\lambda \times P)(A).$$
Given $t \in \T$ write
$$A_t = \{(\i,\j,\nu) \in \spt P : Z(t,\i,\j,\nu) \in A\}.$$
Now we can decompose
$$(\lambda \times P)(Z^{-1} A) = \int\limits_{(\phi^{-1}[a,b]) \cap [1-\alpha,1)} P(A_t) \, dt + \int\limits_{(\phi^{-1}[a,b]) \setminus [1-\alpha,1)} P(A_t) \, dt.$$
Then by the definition of $P$ and $\Psi$ we have
$$A_t = \begin{cases}\sigma^{-1}[\i'] \times \sigma^{-1}[\j'] \times \Psi(\sigma^{-1}[\v]), & \text{ if } t \in [1-\alpha,1);\\
\sigma^{-1}[\i'] \times [\j'] \times \Psi([\v]), & \text{ if } t \in [0,1-\alpha).
\end{cases}$$
Hence by the $\sigma$ invariance of $\pi_1 \mu$ and the fact that $\sigma\mu_\i = \mu_{\sigma(\i)}$ for all $\i \in I^{\infty}$, we have that when $t \in [1-\alpha,1)$ the measure
$$P(A_t) = \int_{\sigma^{-1}[\v]} \pi_1 \mu(\sigma^{-1}[\i']) \times \mu_\i(\sigma^{-1}[\j']) \, d\pi_1 \mu(\i) = \int_{[\v]} \pi_1 \mu[\i'] \times \mu_\i[\j'] \, d\pi_1\mu(\i).$$
Moreover, if $t \in [0,1-\alpha)$ we have just using the $\sigma$ invariance of $\pi_1 \mu$ that
$$P(A_t) = \int_{[\v]} \pi_1 \mu(\sigma^{-1}[\i']) \times \mu_\i([\j']) \, d\pi_1 \mu(\i) = \int_{[\v]} \pi_1 \mu[\i'] \times \mu_\i[\j'] \, d\pi_1\mu(\i).$$
Hence by combining the previous two expressions we have by the $\phi$ invariance of $dt$ that
$$(\lambda \times P)(Z^{-1} A) = \int_{\phi^{-1}[a,b]} \int_{[\v]} \pi_1 \mu[\i'] \times \mu_\i[\j'] \, d\pi_1\mu(\i) \, dt = (\lambda \times P)(A)$$
as claimed.

We will now show ergodicity. We use \cite[Theorem 1.17(i)]{walters}, which guarantees that it is enough to prove that
\begin{equation*}
\lim_{N\to\infty} \frac{1}{N} \sum_{k=0}^{N-1} (\lambda \times P)(A_1\times B_1 \cap Z^{-k} (A_2\times B_2  )) = (\lambda \times P)(A_1\times B_1) (\lambda \times P)(A_2\times B_2).
\end{equation*}
for all open sets $A_1,A_2$ in $\T$ and cylinder sets
$$B_1 = [\i^{(1)}]\times [\j^{(1)}] \times \Psi[\v^{(1)}] \quad \text{and} \quad B_2 = [\i^{(2)}]\times [\j^{(2)}] \times \Psi[\v^{(2)}]$$
for $\i^{(1)}, \i^{(2)}, \v^{(1)}, \v^{(2)} \in I^*$ and $\j^{(1)}, \j^{(2)} \in J^*$. In particular, to see why the sets $\Psi[\v^{(1)}]$ generate the Borel-sets in $\Psi(I^\infty)$, simply observe that every open set in $\Psi(I^\infty)$ can be obtained as the union of images of cylinders in $I^\infty$.  We first observe that if $\tilde \i \in I^\infty$, then
\begin{equation*}Z(t,\i,\j , \pi_1\mu \times \mu_{\tilde{\i}}) = \begin{cases} (\phi(t), \sigma(\i) , \j, \pi_1\mu \times \mu_{\tilde{\i}}) ,& \text{if } t+\alpha < 1;\\
(\phi(t), \sigma(\i) , \sigma(\j), \pi_1\mu \times \mu_{\sigma(\tilde{\i})}), & \text{otherwise}.
\end{cases}\end{equation*}
Defining a map $\tilde{Z}_{t,k} : \Sigma \times \Psi(I^\infty) \to \Sigma \times \Psi(I^\infty)$ by
$$
\tilde{Z}_{t,k}(\i,\j,\pi_1\mu\times \mu_{\tilde{\i}}) = ( \sigma^k(\i), \sigma^{\ell_k(t)}(\j), \pi_1\mu \times \mu_{\sigma^{\ell_k(t)}(\tilde{\i})}),
$$
we thus have
\begin{equation*}Z^k(t,\i,\j , \pi_1\mu \times \mu_{\tilde{\i}}) = (\phi^k(t), \tilde{Z}_{t,k}(\i,\j,\pi_1\mu\times \mu_{\tilde{\i}})). \end{equation*}
We claim that for sufficiently large $k$, we have for any $t\in \T$
\begin{equation}\label{eq:strongmixing}
 P(B_1\times  \tilde{Z}^{-1}_{t,k} (B_2))=P(B_1)P(B_2).
\end{equation}
Indeed, for any $k$ we have that
$$B_1\cap \tilde{Z}^{-1}_{t,k}(B_2)  =  C_1 \times C_2 \times \Psi(C_3)$$
where
$$C_1 =  [\i^{(1)}] \cap \sigma^{-k} [\i^{(2)} ], \quad C_2 = [\j^{(1)} ] \cap \sigma^{-\ell_k(t)} [\j^{(2)} ], \quad \text{and}\quad  C_3 = [\v^{(1)} ] \cap \sigma^{-\ell_k(t)} [\v^{(2)} ].$$
Thus choosing $k$ large enough so that $k \geq | \i^{(1)} |$ and $\ell_t(k) \geq \max \{ | \j^{(1)} |, | \v^{(1)} |\}$, we can use independence and $\sigma$ invariance of $\pi_1 \mu$ to obtain
\begin{align*}P(B_1\cap \tilde{Z}^{-1}_{t,k}(B_2) ) & =  \int_{\Psi^{-1} \Psi C_3}\pi_1 \mu(C_1)  \mu_{\i^\prime} ( C_2) \, d\pi_1\mu(\i^\prime) \\
& =  \pi_1 \mu  [\i^{(1)}]  \pi_1 \mu [\i^{(2)}] \int\limits_{ \Psi^{-1} \Psi C_3  } \mu_{\i'} [\j^{(1)} ] \mu_{\i'} ( \sigma^{-\ell_k(t)} [\j^{(2)} ] ) \,d\pi_1 \mu(\i') \\
& =   \pi_1 \mu  [\i^{(1)}]  \pi_1 \mu [\i^{(2)}] \int\limits_{ \Psi^{-1}  \Psi [\v^{(1)} ]  } \mu_{\i'} [\j^{(1)} ]  \,d\pi_1 \mu(\i') \int\limits_{ \Psi^{-1} \Psi [\v^{(2)}]  }\mu_{\i'}  [\j^{(2)} ] \,d\pi_1 \mu(\i')  \\
& =  P(B_1)P(B_2).
\end{align*}
Note that the map $\Psi$ is not necessarily injective so $\Psi^{-1} \Psi C_3$ may not be equal to $C_3$, so we have to integrate over the pre-images. Next, unpacking the definition of $P$ and using (\ref{eq:strongmixing}) we see that for large enough $k$ the measure
\begin{eqnarray*} (\lambda \times P)(A_1\times B_1 \cap Z^{-k} (A_2\times B_2  )) & =&  \int \chi_{A_1 \cap \phi^{-k}(A_2) } (t) \,P(B_1\times  \tilde{Z}^{-1}_{t,k} (B_2)) \,d\lambda(t)\\
& = & \lambda(A_1 \cap \phi^{-k}(A_2) ) P(B_1)P(B_2) . \end{eqnarray*}
Summing over $k$ and using $\phi$ ergodicity of $\lambda$ completes the proof of ergodicity.
\end{proof}

\section{Proof of the projection theorem}
\label{sec:proj}

In this section we prove Theorem \ref{thm:proj}. We suppose $\alpha = \log m / \log n$ is irrational. Let $Q$ be our CP-chain constructed previously and recall from Proposition \ref{semicproj} the definition of $E : \Pi_{2,1} \to \R$ which has the form
$$E(\pi) = \iint \dim \pi [S_t(\pi_1 \mu \times \mu_x)] \, d\pi_1 \mu (x) \, dt$$
since $\alpha$ is irrational.

\begin{lemma}
\label{invariant}
For any $\pi \in \Pi_{2,1} \setminus \{\pi_1,\pi_2\}$ we have 
$$E(\pi) = \min\{1,\dim \mu\}.$$ 
\end{lemma}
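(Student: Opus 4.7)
The plan is to show that $E(\pi)$ is constant on $\Pi_{2,1} \setminus \{\pi_1, \pi_2\}$ and then identify the constant via Proposition~\ref{semicproj}(2), which gives $E(\pi) = \min\{1, \dim Q\}$ for Lebesgue-almost every $\pi$. Parametrising $\Pi_{2,1}$ by angles $\theta \in [0, \pi)$ via $\pi_\theta(x,y) = \cos\theta \cdot x + \sin\theta \cdot y$ (so $\pi_1 = \pi_0$ and $\pi_2 = \pi_{\pi/2}$), a direct computation shows that for $\theta \notin \{0, \pi/2\}$ one has $\pi_\theta \circ S_t = c(t,\theta) \cdot \pi_{\psi_t(\theta)}$ with a positive scalar $c(t,\theta)$ and angle dynamics $\psi_t$ defined by $\tan \psi_t(\theta) = n^t \tan \theta$. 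As Hausdorff dimension is scale-invariant, the formula for $E$ displayed before the lemma reduces to
\begin{equation*}
 E(\pi_\theta) \,=\, \int_0^1 G(\psi_t(\theta))\, dt, \qquad G(\phi) := \int \dim \pi_\phi(\pi_1\mu \times \mu_x) \, d\pi_1 \mu(x).
\end{equation*}

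The crux will be the periodicity identity $G(\theta') = G(\theta)$ whenever $\tan\theta' = n\tan\theta$. To prove it, fix $x \in [0,1]$ and let $Y \sim \pi_1\mu$, $Z \sim \mu_x$ be independent, so $\dim \pi_{\theta'}(\pi_1\mu \times \mu_x)$ equals the dimension of the law of $Y + n\tan\theta \cdot Z$. Decomposing $nZ = j_0 + T_n Z$ with $j_0 \in J$ the leading $n$-adic digit of $Z$, the Bernoulli product structure of $\mu_x$ implies that, conditionally on $x$, $j_0$ has distribution $p_{i_0(x)}$ independent of $T_n Z$, and that $T_n Z \sim \mu_{T_m x}$. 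Consequently, the law of $Y + n\tan\theta \cdot Z$ is a finite convex combination, with weights $p_{i_0(x)}(j_0)$, of translates by $j_0 \tan\theta$ of the convolution $\pi_1 \mu * (\tan\theta \cdot \mu_{T_m x})$. Since Hausdorff dimension is translation invariant and a finite mixture of measures of common dimension inherits that dimension, the common value equals $\dim \pi_\theta(\pi_1 \mu \times \mu_{T_m x})$. Integrating over $x \sim \pi_1 \mu$ and invoking $T_m$-invariance of $\pi_1 \mu$ then gives $G(\theta') = G(\theta)$.

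Passing to the coordinate $s := \log_n |\tan \phi|$ for $\phi \in (0,\pi) \setminus \{\pi/2\}$, the identity says $G$ is $1$-periodic in $s$, while $\psi_t$ corresponds to the translation $s \mapsto s + t$. Therefore
\begin{equation*}
 E(\pi_\theta) \,=\, \int_0^1 G(s + t)\, dt \,=\, \int_0^1 G(u)\, du,
\end{equation*}
which is independent of $\theta$. Proposition~\ref{semicproj}(2) guarantees $E(\pi) = \min\{1, \dim Q\}$ on a set of full Lebesgue measure in $\Pi_{2,1}$, and since $\{\pi_1, \pi_2\}$ is Lebesgue-null, the common constant value must be $\min\{1, \dim Q\} = \min\{1, \dim \mu\}$, completing the proof.

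I expect the main technical obstacle to be the periodicity identity $G(\theta') = G(\theta)$: it hinges on the shift identity $T_n \mu_x = \mu_{T_m x}$ (valid precisely because $\mu$ is Bernoulli) to absorb the slope dilation by $n$ into a symbolic shift, after which $T_m$-invariance of $\pi_1 \mu$ closes the argument. One also needs the elementary fact that a finite mixture of measures sharing a common lower Hausdorff dimension preserves that dimension, which follows from the definition $\dim \mu = \inf\{\dim E : \mu(E) > 0\}$.
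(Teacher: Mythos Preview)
Your argument is correct but follows a different path from the paper. After the same linear-algebra observation $\pi_\theta \circ S_t = c(t,\theta)\,\pi_{\psi_t(\theta)}$ (which in the paper's parametrisation $\pi_s(x,y) = x + n^{-s}y$ reads simply $\pi_s \circ S_t = n^{-t/2}\pi_{s-t}$), the paper applies Marstrand's projection theorem, in the Hunt--Kaloshin form for measures, \emph{directly} to each fibre measure $\pi_1\mu \times \mu_x$: as $t$ ranges over $[0,1)$ the projection parameter $s-t$ sweeps an interval of length $1$, so for Lebesgue-a.e.\ $t$ one has $\dim \pi_{s-t}(\pi_1\mu \times \mu_x) = \min\{1,\dim(\pi_1\mu \times \mu_x)\} = \min\{1,\dim\mu\}$, and the double integral collapses to the answer in one line. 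No periodicity identity is needed and Proposition~\ref{semicproj}(2) is not invoked.

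Your route instead establishes constancy of $E$ on $\Pi_{2,1}\setminus\{\pi_1,\pi_2\}$ structurally, via the shift relation $\sigma\mu_\i = \mu_{\sigma\i}$ and the $T_m$-invariance of $\pi_1\mu$, and only then identifies the constant through Proposition~\ref{semicproj}(2). This is longer but has a conceptual payoff: it shows that the \emph{constancy} of $E$ off the coordinate axes is a direct consequence of the Bernoulli self-similarity of $\mu$, separated from the identification of the constant value. The paper's argument is shorter and more efficient; yours exposes more of the mechanism.
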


\begin{proof}
Parametrise non-vertical and non-horizontal orthogonal projections in $\Pi_{2,1}$ by projections $\pi_s : \R^2 \to \R$, $s \in \R$, by
$$\pi_s^\pm(x,y) = x \pm n^{-s} y, \quad (x,y) \in \R^2.$$
Let us just prove the result for $\pi_s = \pi_s^+$, as the case for $\pi_s^-$ is symmetric. Fix $s \in \R$. We have for every $(x,y) \in \R^2$ that
$$\pi_{s} \circ S_t(x,y) = \pi_s(n^{-t/2}x,n^{t/2}y) = n^{-t/2}x + n^{t/2-s}y = n^{-t/2}\pi_{s-t}(x,y).$$
Thus as $\dim$ does not change under scaling by $n^{-t/2}$, we have for all $x$ and $t$ that
$$\dim \pi_{s} S_t(\pi_1 \mu \times \mu_x) = \dim \pi_{s-t}(\pi_1 \mu \times \mu_x).$$
Marstrand's projection theorem (the measure version presented by Hunt and Kaloshin in \cite[Theorem 4.1]{HuntKaloshin1997}) implies that $\dim \pi_{s-t}(\pi_1 \mu \times \mu_x) = \min\{1,\dim(\pi_1 \mu \times \mu_x)\}$ for Lebesgue almost every $t \in [0,1)$ and by the definition of our CP-chain, $\dim (\pi_1 \mu \times \mu_x) = \dim \mu$ for $\pi_{1} \mu$ almost every $x$.  It follows that
$$E(\pi_s) = \iint \dim \pi_{s-t}(\pi_1 \mu \times \mu_x) \, dt \, d\pi_1 \mu (x) = \min\{1,\dim \mu\},$$
which is the claim.
\end{proof}

Lemma \ref{invariant} proves Theorem \ref{thm:proj} since by Proposition \ref{semicproj}(3) we have $\dim \pi \mu \geq E(\pi)$ and projections do not increase $\dim$.

\section{Proofs of the distance set results}
\label{sec:dist}

We first prove our general theorem which gives an ergodic theoretic criterion for proving the dimension version of the distance set conjecture.  We then apply this theorem in two concrete settings.  As a first application we verify the conjecture for self-affine carpets in the Bedford-McMullen class, provided the dimension is greater than or equal to 1, and the Lalley-Gatzouras and Bara\'nski classes, provided the dimension is strictly greater than 1, proving Corollary \ref{cor:selfaffinedist}. Then we show how our technique may be used to recover results of Orponen \cite{orponen} and Falconer-Jin \cite{jin}, proving Corollary \ref{cor:selfsimdist}.  Finally we prove Proposition \ref{thm:distset2} which studies the conjecture in the case when we know the exceptional set in Marstrand's projection theorem is small and apply it to irrational type carpets.

\subsection{Proof of Theorem \ref{thm:distset}} Let $Q$ be the CP-chain which $\mu$ generates and write $K = \spt \mu$. We may assume $K$ is a compact uncountable subset of $[0,1]^2$.  The \emph{direction set} of $K$ is
\[
S(K) = \bigg\{\frac{x-y}{\lvert x-y\rvert} : x,y \in K, \, x \neq y \bigg\} \subset S^1.
\]
We may assume that $S(K)$ is dense in $S^1$. This is because of the following observation of Orponen already given in \cite{orponen}. Namely, if $S(K)$ is not dense in $S^1$, then $K$ is contained in a rectifiable curve by \cite[Lemma 15.13]{Mattila95}. Then as $\cH^1(K) > 0$, a result of Besicovitch and Miller \cite{besmil} gives that $D(K)$ contains an interval, and in particular $\dim D(K) = 1$.

Let $E$ be the lower semicontinuous function given by $Q$, defined above in Proposition \ref{semicproj}, and let $\epsilon > 0$. By the lower semicontinuity of $E$, Proposition \ref{semicproj} gives us that the set $\Pi_\epsilon \subset \Pi_{2,1}$ consisting of `good projections', i.e. those $\pi$ satisfying 
$$E(\pi) > \min\{ 1, \dim \mu \} - \epsilon$$ 
is open and non-empty (in fact it is also dense and of full measure).  Identifying $\Pi_{2,1}$ with the upper half of $S^1$ and using denseness of $S(K)$ and openness of $\Pi_\epsilon$, choose two points $x,y \in K$ such that the direction
\[
\frac{x-y}{\lvert x-y\rvert} \equiv \pi \in \Pi_\epsilon.
\]
Let $g: [0,1]^2 \setminus B(x, \lvert x-y\rvert/3)  \to \mathbb{R}$ be the distance map defined by $g(z) = \lvert z-x \rvert$. Notice that $g$ is $C^1$ and the derivative $D_z g = (z-x)/|z-x|$ in the support of $g$. Now extend $g$ to a $C^1$ mapping $g$ on the whole of $[0,1]^2 $. Fix $q \in \mathbb{N}$. By the openness of $\Pi_\epsilon$, we may choose $r>0$ small enough such that
\begin{itemize}
\item[(1)] $r$ is sufficiently small compared to the distance between $x$ and $y$, i.e. $r < \lvert x-y\rvert/3$
\item[(2)] the derivative of $g$ is sufficiently close to $\pi$ on $B(y,r)$, i.e. the norm
\begin{align}\label{forg}
\sup_{z \in B(y,r)} \| D_z g - \pi\| < \delta^q,
\end{align}
where $\delta$ is the constant coming from the definition of the $\delta$-regular partition operator.
\end{itemize}
Consider the restricted and normalised measure 
$$\nu = \mu(B(y,r))^{-1}\mu\vert_{B(y,r)}.$$  
It is a consequence of the Besicovitch density point theorem that $\nu$ generates the \textit{same} CP-chain $Q$ at $\nu$ almost every $z$; see for example \cite[Lemma 7.3]{HocShm12}.   Proposition \ref{prop8.4} now gives us that
$$\dim g \nu \geq E_q(\pi) - C/q,$$
where $C$ depends only on the fixed regularity parameter $\delta \in (0,1)$ of the partition operator associated to the CP-chain $Q$.  Since $g$ maps $B(y,r) \cap K$ into $D(K)$, we have that $g\nu$ is supported on $D(K)$ and since $q \in \mathbb{N}$ was arbitrary we can pass to the limit giving
$$\dim D(K) \geq \dim g  \nu \geq E_q(\pi) - C/q \to E(\pi)$$
As $\pi \in \Pi_\epsilon$, we obtain  $\dim D(K) > \min\{1,\dim \mu\}-\epsilon$ completing the proof as $\epsilon > 0$ was arbitrary.

\subsection{Proof of Corollary \ref{dyadicmicro}}
\hspace{1mm}
We may assume that the measure $\mu$ is supported on $[0,1/2]^2$ by rescaling the original measure to there. Applying a random translation argument, Hochman and Shmerkin exhibited in \cite[Theorem 7.10]{HocShm12} that for Lebesgue almost every $\omega \in [0,1/2]^2$ we have that at $\mu$ almost every $x$ there is an ergodic CP-chain $Q$ for the dyadic partition operator supported on dyadic micromeasures of $\mu+\omega$ at $x+\omega$ of dimension $\dim Q$ at least $\dim \mu$. Here $\mu+\omega$ is the translate of $\mu$, defined by $\mu(A-\omega)$ for $A \subset \R^2$. In fact, the dimension of the CP-chain is at least the so called \textit{upper entropy dimension} of the measure $\mu$, which is greater than or equal to $\dim \mu$. On the other hand, given an ergodic CP-chain $Q$, then $Q$ almost every measure $\nu$ is exact dimensional with dimension $\dim \mu$ and generates $Q$. As $\dim \nu = \dim \mu > 1$, we have $\cH^1(\spt \nu) > 0$, so we may apply Theorem \ref{thm:distset} to any one of these $\nu$ to obtain the claim.

\subsection{Proof of Corollary \ref{cor:selfaffinedist}}

Proving Corollary \ref{cor:selfaffinedist} for Bedford-McMullen carpets follows easily from Theorem \ref{thm:distset}, combined with the fact that if a Bedford-McMullen carpet $K$ has $\dim K \geq 1$, then $\mathcal{H}^1(K)>0$, see \cite{perescarpets}.  We can find a Bernoulli measure $\mu$ supported on $K$ with full Hausdorff dimension - commonly referred to as the McMullen measure, see \cite{mcmullen}. Thus combining Theorem \ref{thm:main} and Theorem \ref{thm:distset} it is immediate that such a Bedford-McMullen carpet satisfies the distance set conjecture.  To extend this to the more general classes of self-affine carpet described in Corollary \ref{cor:selfaffinedist} we rely on the following result due to Ferguson-Jordan-Shmerkin \cite{fjs}:

\begin{lemma}[Lemma 4.3 of \cite{fjs}] \label{FJS}
Given any Lalley-Gatzouras or Bara\'nski type carpet $K$ and $\varepsilon>0$, there exists a self-affine set $K_\varepsilon \subset K$ with
\[
\dim K_\varepsilon \geq \dim K - \varepsilon
\]
which is the attractor of an IFS of maps on $[0,1]^2$ with linear part of the form
\[
\begin{pmatrix}
a & 0\\
0 & b
 \end{pmatrix}
\]
for fixed uniform constants $a,b \in (0,1)$.  Moreover, this subsystem has uniform fibers in the sense that when the rectangles corresponding to the maps are projected in the direction of the fastest expansion, they are either disjoint or lie perfectly on top of each other and there are the same number of rectangles in each of the resulting columns.
\end{lemma}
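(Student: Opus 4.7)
The plan is to build $K_\varepsilon$ as the attractor of a sub-IFS extracted from a long iterate of the original IFS $\{S_l\}_{l\in\Lambda}$ defining $K$, by a two-step pigeonhole argument. The first step will force all composed maps to share the same diagonal linear part, and the second step will impose the uniform-fiber structure; throughout, one must track exponential counts to guarantee $\dim K_\varepsilon \geq \dim K - \varepsilon$. To begin, I would invoke the variational principle for Hausdorff dimension on Lalley-Gatzouras and Bara\'nski carpets and pick a Bernoulli probability vector $(p_l)_{l\in\Lambda}$ such that the associated self-affine Bernoulli measure $\mu_p$ satisfies $\dim \mu_p \geq \dim K - \varepsilon/2$.

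For a large $N$ to be chosen later, I would pass to the $N$-iterate IFS consisting of compositions $S_\mathbf{l}=S_{l_1}\circ\cdots\circ S_{l_N}$, $\mathbf{l}\in\Lambda^N$, each with diagonal linear part $\mathrm{diag}(a^x_\mathbf{l}, a^y_\mathbf{l})$ whose entries are products depending only on the multiplicity vector $(n_l)_{l\in\Lambda}$ recording how often each letter occurs in $\mathbf{l}$. Since the number of possible multiplicity vectors is only polynomial in $N$, a standard Stirling / large-deviations estimate produces a single typical vector $n_l \approx Np_l$ whose bin has multinomial cardinality $\exp(NH(p)-o(N))$ and carries almost all of the $\mu_p^{\otimes N}$-mass; within this bin every map has identical linear part $\mathrm{diag}(a,b)$, where $a$ and $b$ are the exponentials of the Lyapunov averages of $\mu_p$ at scale $N$. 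Next, I would enforce uniform fibers inside this bin: since all the rectangles $S_\mathbf{l}([0,1]^2)$ now share the same dimensions, projecting in the fast-expansion direction partitions them into finitely many columns, and I would classify the words in the bin by the resulting column-count profile. Again only polynomially many profiles exist, so a further pigeonhole isolates a sub-exponential fraction of words with a common profile; an additional polynomial-cost trimming equalises column populations, producing the final family $\mathcal{F}^*$ generating the required sub-IFS.

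The main obstacle is this second pigeonhole: one has to verify that the natural column profile arising from a $\mu_p$-typical long word is asymptotically close enough to balanced that imposing exact uniformity costs only $o(N)$ in log-cardinality, which reduces to a standard concentration / equidistribution statement for the Bernoulli process $\mu_p^{\otimes N}$ over columns. Once this is established, applying the Bedford-McMullen dimension formula to the uniform subsystem attractor (in terms of $a$, $b$, and the balanced column data) matches the Lalley-Gatzouras / Bara\'nski dimension formula for $\mu_p$ up to $o(1)$, and taking $N$ sufficiently large gives $\dim K_\varepsilon \geq \dim \mu_p - \varepsilon/2 \geq \dim K - \varepsilon$, completing the proof.
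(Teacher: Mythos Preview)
The paper does not prove this lemma at all: it is quoted verbatim as Lemma 4.3 of \cite{fjs} and used as a black box in the proof of Corollary \ref{cor:selfaffinedist}. There is therefore no ``paper's own proof'' to compare against.

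That said, your sketch is essentially the standard approach used in \cite{fjs}, and it is sound. One simplification worth noting: in the Lalley--Gatzouras case, once you have fixed the full multiplicity vector $(n_{ij})_{i,j}$ in the first pigeonhole step, the uniform-fiber property is automatic rather than requiring a genuine second pigeonhole. Indeed, two level-$N$ rectangles lie in the same column if and only if their $i$-sequences coincide, and for any fixed $i$-sequence with the prescribed $i$-marginals $n_i = \sum_j n_{ij}$, the number of compatible $j$-sequences is the product of multinomials $\prod_i \binom{n_i}{n_{i1},\ldots,n_{iJ_i}}$, which depends only on the multiplicity vector and not on the particular $i$-sequence. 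So every column already contains the same number of maps, and the ``main obstacle'' you flag is in fact vacuous here. The dimension comparison then reduces, as you say, to matching the Bedford--McMullen formula for the uniform subsystem against the Ledrappier--Young / Lalley--Gatzouras formula for $\mu_p$, with the $o(N)$ losses absorbed by taking $N$ large.
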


In light of this lemma, in order to prove Corollary \ref{cor:selfaffinedist} for Lalley-Gatzouras or Bara\'nski type carpets $K$ with $\dim K >1$, it suffices to prove it for self-affine sets of the form described in the lemma.  In essence, these sets are equivalent to Bedford-McMullen carpets.  Of course, they need not be $T_{m,n}$ invariant and $a,b$ need not be the reciprocals of integers $m,n$, but they share many of the same properties.  Indeed the analogue of the McMullen measure is a product measure easily seen to have full Hausdorff dimension and, more importantly, the symbolic dynamics used to describe them are identical to the Bedford-McMullen case and thus we can construct an ergodic CP-chain in exactly the same way as we did when proving Theorem \ref{thm:main} and this, combined with Theorem \ref{thm:distset}, proves the result.

\subsection{Proof of Corollary \ref{cor:selfsimdist}}

Proving the distance set conjecture for arbitrary planar self-similar sets with positive length follows almost immediately from Theorem \ref{thm:distset} when we observe the following.

\begin{lemma}\label{selfsimcpchain}
For any self-similar measure $\mu$ in $\mathbb{R}^2$ satisfying the strong separation condition (SSC), there is an ergodic CP-chain $Q$ for the dyadic partition operator supported on the dyadic micromeasures of $\mu$ with dimension at least $\dim \mu$ and such that the dyadic micromeasures $\nu$ are of the form 
$$\nu = \mu(B)^{-1}S(\mu|_B)$$
for some Borel-set $B$ with $\mu(B) > 0$ and some similitude $S$ of $\R^2$. Moreover, the original measure can be recovered from a given micromeasure $\nu$ as $\mu = \nu(B')^{-1}S'(\nu|_B')$, for some Borel-set $B'$ and similitude $S'$.
\end{lemma}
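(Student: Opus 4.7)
The plan is to use the self-similarity of $\mu$ together with the SSC to identify every dyadic blow-up of $\mu$ at a generic point as a rescaled restriction of $\mu$, and then to package these blow-ups into an ergodic CP-chain for the dyadic partition operator.

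First, I would set up the symbolic model. Let $\{S_i\}_{i=1}^N$ be the contracting similitudes of the IFS with ratios $r_i$ and probabilities $(p_i)$, and let $\pi\colon\Omega\to K$ be the coding map with $\Omega=\{1,\dots,N\}^{\mathbb N}$, so that $\mu=\pi\tilde\mu$ for the Bernoulli measure $\tilde\mu$ on $\Omega$. For a finite word $\tau\in\{1,\dots,N\}^*$, let $S_\tau=S_{\tau_1}\circ\cdots\circ S_{\tau_{|\tau|}}$, $p_\tau=\prod p_{\tau_i}$ and $r_\tau=\prod r_{\tau_i}$. By SSC, $\pi$ is a bijection between $\Omega$ and $K$.

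Next, I would identify each dyadic minimeasure. For $\mu$-a.e.\ $x=\pi(\omega)$ and every sufficiently large $n$, let $m(n)$ be the largest integer with $Q_n(x)\subseteq S_{\omega|_{m(n)}}(K)$; SSC ensures that $m(n)$ is well-defined and that $r_{\omega|_{m(n)}}/2^{-n}$ remains uniformly bounded in $n$. Setting $B_n=S_{\omega|_{m(n)}}^{-1}(Q_n(x))\cap K$ and $S_n=T_{Q_n(x)}\circ S_{\omega|_{m(n)}}$, the self-similarity relation $\mu|_{S_{\omega|_{m(n)}}(K)}=p_{\omega|_{m(n)}}\,S_{\omega|_{m(n)}}\mu$ yields
\begin{equation*}
\mu^{Q_n(x)}\;=\;\mu(B_n)^{-1}\,S_n(\mu|_{B_n}),
\end{equation*}
so each minimeasure already has the required form. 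Using a preliminary random translation as in Theorem~7.10 of \cite{HocShm12} if needed, we can arrange that, along subsequences, $B_{n_k}\to B$ in Hausdorff distance and $S_{n_k}\to S$ uniformly with $\mu(\partial B)=0$ and $\mu(B)>0$. Weak continuity then gives the dyadic micromeasure in the form $\nu=\mu(B)^{-1}S(\mu|_B)$.

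To assemble the CP-chain, I would observe that the map $\omega\mapsto\{\mu^{Q_n(\pi(\omega))}\}_{n\ge 0}$ is a factor of the Bernoulli shift on $\Omega$ extended by the bounded cocycle $n\mapsto n-m(n)$, and hence defines an ergodic stationary process on pairs (dyadic cube, renormalised measure). Birkhoff's ergodic theorem then gives convergence of the scenery averages $\tfrac{1}{N}\sum_{k<N}\delta_{(Q_k(x),\mu^{Q_k(x)})}$ at $\mu$-a.e.\ $x$ to an ergodic CP-chain $Q$ for the dyadic partition operator. Since $\mu$ is exact dimensional with dimension $\dim\mu$ (standard under SSC) and generates $Q$, it follows that $\dim Q=\dim\mu$ by the discussion following Proposition \ref{semicproj}. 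Finally, the recovery statement follows from self-similarity: given $\nu=\mu(B)^{-1}S(\mu|_B)$ with $\mu(B)>0$, SSC forces $B$ to contain some IFS cylinder $S_\tau(K)$ of positive $\mu$-measure; setting $B'=S(S_\tau(K))$ and $S'=(S\circ S_\tau)^{-1}$ and using $\mu|_{S_\tau(K)}=p_\tau\,S_\tau\mu$, a direct computation gives $\mu=\nu(B')^{-1}S'(\nu|_{B'})$.

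The most delicate step is the construction of $Q$: the dyadic grid and the IFS cylinders are not directly compatible, and the index $m(n)$ advances irregularly with $n$. This mismatch is handled by modelling the joint evolution of the dyadic cube and the truncated code $\omega|_{m(n)}$ as a bounded-cocycle skew product over the Bernoulli shift; ergodicity of this skew product, combined with the random translation that guarantees generic boundary behaviour, yields stationarity and ergodicity of the resulting CP-chain $Q$.
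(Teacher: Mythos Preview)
The paper does not give a self-contained proof of this lemma; it simply records that the statement follows by combining \cite[Theorem~7.10]{HocShm12}, \cite[Proposition~9.1]{HocShm12}, and the discussion following the latter. Your proposal is therefore an attempt at a direct argument, and the part you yourself flag as ``most delicate'' is where it breaks down.

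First a small slip: the condition ``$Q_n(x)\subseteq S_{\omega|_{m(n)}}(K)$'' cannot hold as written (the left side is a square, the right side typically has dimension $<2$); you mean $Q_n(x)\cap K\subseteq S_{\omega|_{m(n)}}(K)$. With that corrected, your identification of the minimeasures as $\mu(B_n)^{-1}S_n(\mu|_{B_n})$ and the recovery of $\mu$ from $\nu$ via an interior cylinder are in line with \cite[Proposition~9.1]{HocShm12}. The genuine gap is the construction of $Q$. You assert that the scenery process is a factor of ``the Bernoulli shift on $\Omega$ extended by the bounded cocycle $n\mapsto n-m(n)$'' and that Birkhoff then yields an ergodic CP-chain. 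But no such skew product has actually been defined: the dyadic index $n$ is not the time parameter of the Bernoulli shift, the increments $m(n{+}1)-m(n)$ depend on the position of $x$ relative to the dyadic grid and are not functions of the shifted code alone, and convergence of some auxiliary ergodic averages does not by itself produce a \emph{stationary Markov process with the Furstenberg kernel}, which is what ``CP-chain'' means. Note too that the lemma only asserts the \emph{existence} of an ergodic CP-chain supported on the micromeasures with $\dim Q\ge\dim\mu$, not that $\mu$ itself generates it; in \cite{HocShm12} this existence (Theorem~7.10) is obtained via an ergodic decomposition of the natural extension of the magnification process after a random translation, precisely because no direct stationarity is available when the dyadic grid and the IFS cylinders are incompatible. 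You should invoke that result, as the paper does, rather than the unspecified cocycle construction.
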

This follows by combining \cite[Theorem 7.10]{HocShm12}, \cite[Proposition 9.1]{HocShm12} and the subsequent discussion.   Moreover, by \cite[Proposition 7.7]{HocShm12} a $Q$ typical measure $\nu$ generates $Q$ and has dimension $\dim \mu$.  Let $K$ be a self-similar set satisfying the SSC, $\mu$ be a self-similar measure supported on $K$ with full Hausdorff dimension and fix such a $Q$ typical micromeasure $\nu$ and observe that, using the notation from Lemma \ref{selfsimcpchain},
\[
K \supseteq S( \spt \nu ) \qquad \text{and} \qquad \spt \nu \supseteq S'(K)
\]
and therefore the assumption $\mathcal{H}^1(K)>0$ carries through to give $\mathcal{H}^1(\spt \nu)>0$ and
\[
D(K)  \supset D( S(\spt \nu) ) = r  D( \spt \nu ),
\]
where $r > 0$ is the similarity ratio of $S$.  Thus we may apply Theorem \ref{thm:distset} to obtain that
$$\dim D(K)  \geq \dim D(\spt \nu)  \geq \min\{1,\dim \nu\} = 1,$$ 
which gives Corollary \ref{cor:selfsimdist} when $K$ satisfies the SSC. To extend this to the overlapping case we apply the following lemma, which follows from \cite[Proposition 1.8]{farkas}.

\begin{lemma}\label{sscsubsystem}
Given any self-similar set $K$ in the plane with $\mathcal{H}^1(K)>0$ and $\varepsilon>0$, there exists a self-similar set $K_\varepsilon \subset K$ such that $K_\varepsilon$ satisfies the SSC, $\dim K_\varepsilon \geq \dim K - \varepsilon$ and $\mathcal{H}^1(K_\varepsilon)>0$.
\end{lemma}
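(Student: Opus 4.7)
The plan is to obtain Lemma~\ref{sscsubsystem} as a near-direct consequence of Proposition~1.8 in \cite{farkas}, which produces, for any self-similar set with positive Hausdorff measure at its critical exponent, sub-iterated function systems satisfying the SSC whose attractors carry arbitrarily close-to-full dimension. The essential idea behind such a construction is to pass to a sufficiently deep iterate of the IFS generating $K$ and to discard cylinders whose closures touch, controlling the number of discarded cylinders via a counting argument so that the surviving attractor loses at most $\varepsilon$ in dimension. The role of the hypothesis $\mathcal{H}^1(K) > 0$ is to ensure that $K$ itself satisfies the open set condition (by a theorem of Schief), which is the starting point needed for Farkas' construction.

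First I would observe that the hypothesis $\mathcal{H}^1(K)>0$ forces $\dim K\ge 1$, and then split into two cases depending on whether this inequality is strict.

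In the case $\dim K > 1$, the proof is essentially immediate. After shrinking $\varepsilon$ if necessary so that $\varepsilon < \dim K - 1$, apply Proposition~1.8 of \cite{farkas} to obtain a self-similar $K_\varepsilon\subset K$ satisfying the SSC with $\dim K_\varepsilon \ge \dim K - \varepsilon > 1$. Then $\mathcal{H}^1(K_\varepsilon)=\infty>0$ automatically, since any set with Hausdorff dimension strictly larger than $1$ has infinite $1$-dimensional Hausdorff measure.

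In the boundary case $\dim K = 1$, one must be more careful, and this will be the main obstacle: Farkas' construction must be invoked in a way that both delivers the SSC and keeps the attractor's dimension exactly $1$, because $\mathcal{H}^1(K_\varepsilon)>0$ cannot hold if $\dim K_\varepsilon < 1$. Here one uses the fact that when $\mathcal{H}^s(K)>0$ with $s=\dim K$, the strongly separated sub-IFS produced by Farkas carries positive $s$-dimensional Hausdorff measure as well, so one can choose $K_\varepsilon$ with $\dim K_\varepsilon = 1$ and $\mathcal{H}^1(K_\varepsilon)>0$ directly. The crucial input is that Schief's theorem forces $K$ to satisfy OSC in this boundary regime, making available the subword-removal argument underlying Proposition~1.8.

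Combining the two cases produces the $K_\varepsilon$ required, and no further work is needed. The only delicate point is Case~2, where the simultaneous preservation of SSC, dimension and positive length has to be extracted from Farkas' proposition rather than from a naive iterate-and-prune argument.
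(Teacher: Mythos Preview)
Your proposal is correct and follows essentially the same route as the paper: both defer to Farkas \cite[Proposition~1.8]{farkas} for the delicate positive-length part, with the folklore Vitali argument handling the SSC and dimension requirements. Your case split ($\dim K>1$ versus $\dim K=1$) and the explanation of Schief's theorem add helpful detail that the paper omits, but the underlying idea is identical.
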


If we do not require the $\mathcal{H}^1(K_\varepsilon)>0$ condition, then the proof of this (essentially folklore) result follows easily from the Vitali covering lemma.  The details have been written down in, for example \cite[Lemma 3.4]{orponen} in the planar case or \cite[Lemma 1.7]{farkas} which contains the result in higher dimensions, along with a stronger result concerning the rotation groups.  Adding the $\mathcal{H}^1(K_\varepsilon)>0$ condition makes the result harder to prove and relies on delicate exhaustion type arguments.  This was dealt with by Farkas \cite[Proposition 1.8]{farkas}.

\subsection{A technical proposition and proof of Corollary \ref{thm:distsetirrational}} \label{technicalprop} 

Here we state and prove Proposition \ref{thm:distset2}, which was alluded to at the end of Section \ref{sec:intro}.  It gives more precise information about the distance set conjecture in the case when $\dim K \leq 1$.  Finally we apply this proposition to give Corollary \ref{thm:distsetirrational}.

\begin{prop} \label{thm:distset2}
Let $\mu$ be a measure on $\R^2$ which generates an ergodic CP-chain such that
$$E(\pi) = \min\{1,\dim \mu\}$$
at all except countably many $\pi \in \Pi_{2,1}$. Then
\[
\dim D(\spt \mu) \geq \min\{ 1, \dim \mu\}.
\]
\end{prop}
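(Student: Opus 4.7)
The plan is to mimic the argument of Theorem~\ref{thm:distset}, where the only place the hypothesis $\cH^1(\spt\mu)>0$ was used was to guarantee the existence of a pair $x,y\in K:=\spt\mu$ such that the unit direction $(x-y)/|x-y|$ corresponds to a good projection $\pi\in\Pi_{2,1}$ (one satisfying $E(\pi)=\min\{1,\dim\mu\}$). In our new setting, the set of bad directions is countable by hypothesis, so I need only produce enough directions in the direction set $S(K)$ to avoid this countable exceptional set. Once such a pair is found, the end-game---using the $C^1$ distance map $g(z)=|z-x|$, restricting $\mu$ to a small ball around $y$, and applying Proposition~\ref{prop8.4}---goes through essentially unchanged.

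I would split into two cases depending on whether $K$ lies on a line. First, if $K$ is contained in a line $L$, identify $L$ isometrically with $\R$ to view $K$ as $K'\subset\R$; since $K'-K'$ is symmetric about $0$ and contains the translate $K'-\{a\}$ for any fixed $a\in K'$, one has $\dim D(K)=\dim(K'-K')\ge\dim K'\ge\dim\mu$. Because $K\subset L$ forces $\dim\mu\le 1$, this gives $\dim D(K)\ge\min\{1,\dim\mu\}$ and the proposition holds in this case.

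If $K$ is not contained in any line, I claim that for some $z\in K$ the direction set $A_z:=\{(x-z)/|x-z|:x\in K\setminus\{z\}\}$ is uncountable. Indeed, $\dim\mu>0$ (the case $\dim\mu=0$ being trivial) forces $\mu$ to be non-atomic and hence $K$ to be uncountable. Pick any $z_0\in K$; if $A_{z_0}$ were countable, then $K\setminus\{z_0\}$ would be covered by countably many rays from $z_0$, so by pigeonhole some line $L$ through $z_0$ would meet $K$ in an uncountable subset. Since $K\not\subset L$, pick $z\in K\setminus L$; the map $L\ni x\mapsto (x-z)/|x-z|\in S^1$ is then a homeomorphism onto a half-circle minus a point, whence $A_z$ contains a homeomorphic (hence uncountable) copy of $K\cap L$, proving the claim. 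Because the bad directions form a countable set, there exists $x\in K\setminus\{z\}$ with $(x-z)/|x-z|$ corresponding to a good projection $\pi$.

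With such a pair $x,z$ in hand, the proof concludes as in Theorem~\ref{thm:distset}: for any $q\in\N$, by continuity of $w\mapsto (w-x)/|w-x|$ at $z$, choose $r<|x-z|/3$ so small that the $C^1$ map $g(w)=|w-x|$ satisfies $\sup_{w\in B(z,r)}\|D_wg-\pi\|<\delta^q$; let $\nu=\mu(B(z,r))^{-1}\mu|_{B(z,r)}$, which generates the same ergodic CP-chain $Q$ at $\nu$-a.e.\ point; apply Proposition~\ref{prop8.4} to conclude $\dim D(K)\ge\dim g\nu\ge E_q(\pi)-C/q$; finally let $q\to\infty$ and use $E_q(\pi)\to E(\pi)=\min\{1,\dim\mu\}$. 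The only substantive new ingredient beyond Theorem~\ref{thm:distset} is the direction set uncountability argument in the second case, and this is the main---but elementary---obstacle.
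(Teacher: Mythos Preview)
Your proof is correct and follows essentially the same route as the paper. The paper splits into cases according to whether $S(K)$ is countable, and when it is, deduces that $K$ lies on a line via the same pigeonhole argument you use (find an uncountable $K\cap L$ and a point off $L$); you split instead on whether $K$ lies on a line and run the pigeonhole argument in the other direction to produce uncountably many directions. The line case is handled identically in spirit (the paper cites Falconer \cite[Theorem 3.1]{falc} for $\dim D(K)=\dim K$ on $\R$, which is exactly the $K'-K'\supset K'-a$ observation you wrote out), and the endgame via Proposition~\ref{prop8.4} is the same.
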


\begin{proof}
The proof follows a similar pattern as the proof of Theorem \ref{thm:distset} and we use the same notation. We may again assume $K = \spt \mu$ is a compact uncountable subset of $[0,1]^2$. We split the proof into two cases depending on whether the direction set $S(K)$ is countable or not.

\begin{itemize}
\item[(1)] Suppose $S(K)$ is countable. In this case $K$ must be contained in a union of countably many lines and we claim that, in fact, $K$ is contained in a single line.  If it were not, then we would be able to find a line $L$ with $K \cap L$ uncountable and a point $x \in K \setminus L$.  Therefore the set of directions corresponding to $x$ and points from $K \cap L$ is uncountable, which is a contradiction.  Hence essentially we have $K \subset \R$ and it was already observed by Falconer \cite[Theorem 3.1]{falc} that in this case $\dim D(K) = \dim K$.
\item[(2)] Suppose $S(K)$ is uncountable and fix $\epsilon > 0$. By the lower semicontinuity of $E$, we again find that the set $\Pi_\epsilon \subset \Pi_{2,1}$ consisting of `good projections', i.e. those $\pi$ satisfying 
$$E(\pi) > \min\{ 1, \dim \mu \} - \epsilon$$ 
is open and dense.  Moreover, since $S(K)$ is uncountable and by our principal assumption $ \Pi_{2,1} \setminus \Pi_\epsilon$ is at most countable, we can choose $x,y \in K$ such that the projection $\pi \in \Pi_{2,1}$ determined by
$$\pi = \frac{x-y}{|x-y|}$$
lies in $\Pi_\epsilon$.  We can now complete the proof in the same way as for Theorem \ref{thm:distset}.
\end{itemize}
\end{proof}

Finally, Corollary \ref{thm:distsetirrational} follows from Proposition \ref{thm:distset2} since $E(\pi)$ is equal to $\min\{1, \dim \mu\}$ for all $\pi \in  \Pi_{2,1} \setminus \{\pi_1, \pi_2\}$, see the end of Section \ref{sec:proj}.


\section{Prospects}
\label{sec:prospects}

\subsection{General $(\times m,\times n)$ invariant sets and measures}

A natural direction for our work to go in would be to study general $T_{m,n}$ invariant measures.

\begin{conj}\label{projconj}
Suppose $\log m / \log n$ is irrational. Let $\mu$ be a $T_{m,n}$ invariant measure.  Then
$$\dim \pi \mu = \min\{1, \dim \mu\}$$
for every $\pi \in \Pi_{2,1} \setminus \{\pi_1,\pi_2\}$.
\end{conj}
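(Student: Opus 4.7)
The plan is to reduce to ergodic components and then extend the CP-chain construction from Section~4 beyond the Bernoulli setting. First I would invoke the ergodic decomposition $\mu = \int \mu_\omega \, d\rho(\omega)$ into $T_{m,n}$-ergodic pieces. Both the lower Hausdorff dimension of a measure and the projected measure $\pi\mu$ interact well enough with this decomposition that, after a standard argument using the fact that projections cannot increase dimension, it suffices to establish the identity $\dim \pi \mu_\omega = \min\{1,\dim \mu_\omega\}$ for $\rho$-almost every $\omega$. Hence $\mu$ may be assumed ergodic from the outset.

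For an ergodic $T_{m,n}$-invariant $\mu$, the objective is to show that $\mu$ generates an ergodic CP-chain with respect to the approximate-square partition operator $\Delta$ of Section~\ref{approximatesquares}; once this is achieved, Proposition~\ref{semicproj}(3) combined with the argument of Lemma~\ref{invariant} immediately yields the conjectured statement. The symbolic reformulation of Section~\ref{symbgeom} remains applicable: one wants to show that for $\mu$-a.e.\ $(\i,\j)\in\Sigma$ the orbit $\{Z^k(t,\i,\j,\mu)\}_{k\in\N}$ equidistributes to a $Z$-invariant and $Z$-ergodic distribution on $\T\times\Xi$. In the Bernoulli case this was reduced to a law of large numbers for weakly correlated random variables via the explicit parametrisation $\Psi(\i) = \pi_1 \mu \times \mu_\i$, whose validity rested on the multiplicative form of the conditional measures $\mu_\i$. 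For general ergodic $\mu$, one would instead disintegrate by Rokhlin's theorem and study the induced skew action on the resulting bundle of conditional measures.

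The main obstacle is precisely the loss of the independence between the $I^\infty$ and $J^\infty$ coordinates that underlay Lemma~\ref{simplefunctions} and the decorrelation estimate in Lemma~\ref{satisfylyons}. A natural substitute is a quantitative mixing hypothesis on $\mu$ with respect to the Markov partition $\xi$: if $\mu$ is weakly Bernoulli, or satisfies an exponential $\psi$-mixing bound, then the correlations $\E[\tilde X_i \tilde X_j]$ still decay quickly enough for Lyons' strong law (Proposition~\ref{lyons}) to deliver an analogue of Lemma~\ref{simplefunctions}, and the remainder of the Section~4 argument should go through with only cosmetic modifications. To upgrade from this mixing class to all ergodic invariant measures, one could attempt an approximation scheme, exhausting $\mu$ by finite-order Markov measures obtained by conditioning on longer and longer cylinders, each of which is weakly Bernoulli with respect to a suitably refined partition. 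The hard part will be controlling the projection dimension uniformly along such an approximation, since the lower semicontinuity of $E(\pi)$ alone is insufficient to transfer a pointwise limit of projection dimensions to the limiting measure. I expect that resolving this approximation step---or, alternatively, directly constructing a suitable fractal distribution in the sense of Hochman for arbitrary ergodic $T_{m,n}$-invariant measures---will require dimension-rigidity tools specific to the non-conformal setting that go beyond what the CP-chain machinery alone provides, and this is the principal reason the statement remains a conjecture.
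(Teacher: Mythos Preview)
The statement you were asked to prove is labelled \texttt{Conjecture} in the paper and appears in Section~\ref{sec:prospects} (\emph{Prospects}); the paper does not prove it and explicitly presents it as an open problem. So there is no ``paper's own proof'' to compare against, and you are right to conclude at the end of your proposal that the statement remains a conjecture.

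That said, your outline of obstructions and partial strategies tracks the paper's own discussion quite closely. The paper also singles out the loss of the Bernoulli parametrisation of micromeasures as the main technical difficulty and suggests that ``assuming suitable mixing on the fibers, one could say something''---essentially your weak-Bernoulli/$\psi$-mixing route through Lyons' strong law. Your approximation idea via finite-order Markov measures is in the same spirit as the Kenyon--Peres approximation the paper mentions for the set-theoretic version, and you correctly flag the uniformity problem: lower semicontinuity of $E(\pi)$ does not by itself transfer the projection identity across a weak limit of measures. Your reduction to ergodic components at the start is reasonable but not entirely innocent: the interaction of $\dim$ (defined here as lower Hausdorff dimension, an infimum over positive-measure sets) with ergodic decomposition requires some care, since $\dim\mu$ need not equal $\int\dim\mu_\omega\,d\rho(\omega)$ in general.

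In short: your proposal is not a proof and does not claim to be one; it is a sound diagnosis of why the conjecture is open, and it is consistent with---indeed somewhat more detailed than---the paper's own prospects discussion.
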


It would be interesting to attempt to extend our results to Gibbs measures for $T_{m,n}$, although we foresee several technical difficulties, not least in the loss of our neat parametrisation of micromeasures.  However, perhaps assuming suitable mixing on the fibers, one could say something.

The set theoretical version of Conjecture \ref{projconj} could be interesting as well. By \cite{fjs} we already know that Bedford-McMullen carpets give examples of $T_{m,n}$ invariant sets that satisfy the projection theorem. However, to approach the problem for more general invariant sets, one may still be able to use Bernoulli measures for $T_{m,n}$. Kenyon and Peres proved in \cite{kenyonperes}, that for any closed $T_{m,n}$ invariant set $X$ we can approximate it from above with nested Bedford-McMullen carpets $C_k$, $k \in \N$, such that the dimensions of the carpets converge to the dimension of the $T_{m,n}$ invariant set. For each such carpet $C_k$, there is the natural measure of full dimension, which is a Bernoulli measure $\mu_k$ on blocks of length $k$ and hence $T_{m,n}^k$-invariant. Defining the measures $\nu_k = \frac{1}{k} \sum_{i=0}^{k-1} T_{m,n}^i \mu_k$ we recover a $T_{m,n}$-invariant measure.  Moreover, the measures $\nu_k$ converge weakly to a measure $\nu$, which is $T_{m,n}$ ergodic, invariant and supported on $X$ of dimension $\dim X$. It may be possible to try to study the scaling scenery of $\nu$ using the scaling sceneries of $C_k$ and thus try to prove a projection theorem for $\nu$.

\subsection{Scaling scenery of other non-conformal constructions}

An interesting direction to pursue would be to study the scaling sceneries and CP-chains generated by other measures with a non-conformal structure. For example the Bernoulli measures on the carpets of Bara\'nski type could be a tractable problem as they have a clear partition operator associated to them.  However, their tangent structure can be quite wild in general, as it may happen that none of the directions dominate the other.  In particular, the original measure can appear as a micromeasure, which is in stark contrast to the typical Bedford-McMullen situation.  Tangent sets for Bara\'nski carpets have been studied in \cite{fraserassouad} in the context of computing the Assouad dimension, but only the extreme cases were considered, where the tangent structure was again a product of a projection and a fiber.  

It would also be natural to consider more general self-affine sets such as the carpets introduced by Feng-Wang \cite{fengaffine} and Fraser \cite{fraseraffine}, but the situation is more complicated there as there is no obvious analogue of approximate squares because of a lack of a grid like structure.  As such the partition operator would be more complicated to define.

\ack{We thank Thomas Jordan and Pablo Shmerkin for stimulating discussions and helpful comments, and Tuomas Orponen for pointing out an error in the distance set part of an earlier version of the manuscript.  Finally, we thank an anonymous referee for their careful reading of the manuscript and for making several helpful suggestions on exposition, clarity and accuracy.}

\bibliographystyle{abbrv}
\bibliography{CPchains}

\end{document}